\documentclass[11pt]{amsart}
\usepackage{graphicx} 
\usepackage[utf8]{inputenc}
\usepackage{amsmath}
\usepackage{xcolor}
\usepackage{amssymb}
\usepackage{amsmath}
\usepackage{amsfonts}
\usepackage{amstext}
\usepackage{amsthm}
\usepackage[english]{babel}
\usepackage[autostyle]{csquotes}
\usepackage[super]{nth}
\usepackage{hyperref}
\usepackage[margin=1.1in]{geometry}
\usepackage{txfonts}
\usepackage[normalem]{ulem}
\usepackage{parskip}

\newtheorem{theorem}{Theorem}[section]
\newtheorem{proposition}[theorem]{Proposition}
\newtheorem{lemma}[theorem]{Lemma}
\newtheorem{corollary}[theorem]{Corollary}
\newtheorem{definition}[theorem]{Definition}
\newtheorem{remark}{Remark}

\newtheorem{example}{Example}[section]

{%
\setcounter{enumi}{0}

\begin{enumerate}}%
{\end{enumerate} }

\makeatletter
\newcommand{\myitem}[1]{%
\medskip \item[#1]\protected@edef\@currentlabel{#1}%
} 

\newcommand{\rn}{\mathbb{R}^{n}}
\newcommand{\R}{\mathbb{R}}
\newcommand{\D}{\nabla}
\newcommand{\Fin}[1]{H^{p-1}(\nabla #1)\nabla H(\nabla #1)}
\newcommand{\fin}[1]{H^{p-1}( #1)\nabla H( #1)}

 \numberwithin{equation}{section}

\hypersetup{
    colorlinks=true,
    linkcolor= blue,
    citecolor=blue,
    pdfpagemode=FullScreen,
    }

\title[Boundary blowup solutions of Finsler p-Laplacian on bounded domains]{Boundary blowup solutions for the Finsler p-Laplacian: wellposedness and asymptotic behaviour}

\author[N N Dattatreya]{N N Dattatreya}
\address{\parbox{.8\linewidth}
{{\textbf{N N Dattatreya}}\medskip \\
Indian Institute of Technology - Kanpur, India \medskip}}
\curraddr{}
\email{dattatreya21@iitk.ac.in}
\date{}

\begin{document}
\begin{abstract}
  We study the existence of \textit{large} (\textit{boundary blow-up}) solutions to semilinear equations involving the \textit{Finsler p-Laplacian} on bounded domains with sufficiently smooth boundaries. We establish a \textit{Keller–Osserman}-type condition that ensures the existence of such solutions, and show that this condition retains the same integrability as that of the p-Laplacian. We examine the influence of the anisotropic norm underlying the Finsler p-Laplacian on the boundary behaviour of the solution, then derive asymptotic estimates for large solutions near the boundary of the domain. Using these boundary asymptotics, we prove uniqueness results for power-type nonlinearities. 
\end{abstract}
\subjclass{35J , 35B44 , 35B40 ,34A12 , 35A02 , 35A01 . 
}
\keywords{Finsler p-Laplacian, Anisotropic operator, Existence, Nonexistence, Uniqueness, Boundary blow-up, Boundary asymptotic, Wulff shape, Anisotropic distance}
\maketitle
\section{Introduction}
We investigate positive large (boundary blow-up) solutions of semilinear equations driven by the Finsler p-Laplacian. Let $\Omega$ be a bounded domain in $\rn$. We consider positive solutions of the problem
\begin{equation}\label{finsler large eqn}
    \begin{cases}
        \text{div}\left(H^{p-1}(\nabla u)\nabla H(\nabla u)\right)=f(u)\quad &\text{in }\Omega\\
        u(x)\to\infty &\text{as dist}(x,\partial\Omega)\to 0.
    \end{cases}
\end{equation}
Where $2\leq p<\infty$ and $H\colon\R^n\rightarrow[0,+\infty)$ is a Minkowski norm (\cite{Shenbook2001}). i.e.,
\begin{itemize}
\item $H(x)>0$ for $x\ne 0$.
\item Positive homogeneity of degree 1: $H\left(tx\right)=|t|~H\left(x\right)$ for all $t\in \R$ and $x\in \R^{n}$.
    \item $H\in C^{2}\left(\R^n\setminus \{0\}\right)$.
     \item Strong convexity: The Hessian matrix $\nabla^{2}(H^{2}(x)/2)$ is positive definite for $x\in \rn\setminus\{0\}$.
    \end{itemize}
Further, the assumptions regarding nonlinearity are presented in subsection \ref{assumptions}.

The operator on the left-hand side of \eqref{finsler large eqn} is called the \textit{Finsler p-Laplacian}, denoted by $\Delta_{H}^{p}$.
 It is an anisotropic generalisation of the Laplace and the p-Laplace operators. In the case $p=2$, the operator $\Delta _{H}^{2}$ is called the Finsler Laplacian, which arise in Finsler geometry; we refer to \cite{BaoChernShen2000} for a comprehensive understanding of this geometry. Apart from geometric origin, anisotropic operators arise in various physical contexts, including minimal surface energy \cite{Taylor1978}, crystallography, crystal growth \cite{TaylorChanHandwerker1992}, image processing \cite{RudinOsherFatemi1992}, and the Willmore functional \cite{Ulrich2004}. The study of these anisotropic problems dates back to 1901, when Wulff \cite{Wulff1901} employed anisotropic geometry to investigate crystals. One of the basic objects of study in crystallography is \textit{Wulff shape}, which is the unit ball in the dual norm of $H$ and is a minimiser of an anisotropic functional defined on subsets of $\R^{n}$. For more in this direction, we refer to \cite{CozziFarinaValdinoci2014, Ulrich2004} and the references therein. The Wulff shape plays an important role in our analysis.

Large solutions to elliptic equations are studied extensively in various aspects, such as existence, uniqueness, and behaviour (asymptotics) of the solution and its gradient near the boundary of the domain. A necessary and sufficient condition for the existence of large solutions is the Keller-Osserman condition; for the Laplace equation, it reads as follows  
\begin{align*}
    \int^{\infty}_{r}\frac{ds}{\sqrt{F(s)}}<\infty,\quad \text{for all } r>0,
\end{align*}
where $F$ is the primitive of $f$ with $F(0)=0$.

The study of these problems traces back to L. Bieberbach’s 1916 work \cite{biberbach}, but the significant milestones were the independent contributions of Keller \cite{Keller1957} and Osserman \cite{Osserman1957}, both in 1957. The paper \cite{Keller1957} originated in electrodynamics \cite{osti_4157678} while \cite{Osserman1957} originated in geometry. A related development connecting large solutions to stochastic control can be found in \cite{lions}. For the existence, uniqueness, and asymptotics at the boundary of large solutions to semilinear equations, we refer to \cite{BandleMarcus1992, BandleMarcus1995, McKenna} and the references therein. For quasilinear equations involving the Laplace operator, we refer to \cite{BandleAntonioPorru1997, Lieberman}, and for the p-Laplacian, we refer to \cite{DattaIndro2026, diaz1993, materopaper} and the references therein.
While large solutions for the Laplacian and the p-Laplacian have been studied extensively, much less is known in the anisotropic framework associated with the Finsler operator; see \cite{DellaBlasio2017} for the Hamilton-Jacobi type equation involving the Finsler operator.

The present work extends classical results for semilinear equations to an anisotropic framework.

\subsection{Examples of Minkowski norms}
\begin{example}($2-$ norms)
    Consider
$H_{2}(x):=\left(\displaystyle\sum_{i=1}^{n}|x_{i}|^{2}\right)^{\frac{1}{2}}$.
\end{example}
\vspace{-4mm}
From this, one can obtain the Laplacian operator
       $ -div\left(\D u\right)=\Delta u=\Delta_{H_{2}}^{2}u$, and the $p$-Laplacian operator
    $-div\left(|\D u|^{p-2}\D u\right)=\Delta_{p}u=\Delta_{H_{2}}^{p}u$.
     \begin{example}
      Let $A$ be an $n\times n$ matrix. Take
      $H_{A}(x):=H_{2}(Ax)$.
  \end{example}
  From this, we can get a constant-coefficient elliptic operator of the form $- div\left(A\D u\right)=\Delta_{H_{A}}^{2}$.
  
   Before proceeding to the next example, notice that for $1\leq q<+\infty$, $q\neq 2$, the norms
    \begin{equation*}
H_{q}\left(x\right):=\left(\displaystyle\sum_{i=1}^{n}|x_{i}|^{q}\right)^{\frac{1}{q}}.
   \end{equation*} 
   fail to be Minkowski norms since $\nabla^{2}(H_{q}^{2}(x)/2)$ is only positive semi-definite and not positive definite, see \cite{BaoChernShen2000,Shenbook2001} for details. Consequently, such norms fall outside the framework considered in this article. Nevertheless, the admissible class of Minkowski norms $H$ satisfying the strong convexity condition remains quite rich.
  
   \begin{example}
       Let $\lambda\geq 0$ and $\mu>0$, consider 
       \begin{equation*}
           H_{\lambda,\mu}(x):=\sqrt{\lambda \sqrt{\sum_{i=1}^{n}|x_{i}|^{4}}+\mu\sum_{i=1}^{n}|x_{i}|^{2}}
       \end{equation*}
   \end{example}
Clearly, $H_{2}=H_{0,1}$. Moreover, for $\lambda_{1},\lambda_{2},\mu_{1},\mu_{2}>0$ such that $\lambda_{1}/\mu_{1}\neq \lambda_{2}/\mu_{2}$, the norms $H_{\lambda_{1},\mu_{1}}$ and $H_{\lambda_{2},\mu_{2}}$ are non-isomorphic, see \cite{MezeiVas2019}.
\begin{example}
    Let $n=m_{1}+\cdots +m_{r}$ and $1\leq q,p_{i},\lambda_{i}<+\infty$ for $1\leq i\leq r$, denote $P=\left(p_{1},\cdots,p_{r}\right)$ and $\Lambda=\left(\lambda_{1},\cdots,\lambda_{r}\right)$. Consider 
    \begin{equation*}
        H=H_{q,P,\Lambda}:=\left(\displaystyle \sum_{i=1}^{r}\lambda_{i}\left(\sum_{j=s_{i-1}}^{m_{i}}|x_{j}|^{p_{i}}\right)^{\frac{q}{p_{i}}}\right)^{\frac{1}{q}},
    \end{equation*}
   where $s_{0}=0$ and $s_{k}=\displaystyle\sum_{i=1}^{k}m_{i}$.
\end{example}
\begin{example}(Randers norm)\cite[Example 1.2.2]{Shenbook2001}
    Let $T$ be a linear functional on $\rn$. Define
    \begin{equation*}
        H(x)=H_{2}(x)+Tx.
    \end{equation*}
    Then $H$ is a Minkowski norm if and only if $\|T\|\leq 1$.
\end{example}

\subsection{Assumptions}\label{assumptions}
Throughout the paper, we assume that $p\geq 2$ and that $\Omega$ is a bounded domain in $\R^{n}$ with $C^{2}$ boundary.

We assume the following on the nonlinearity:
\begin{enumerate}
    \myitem {$\textbf{(F)}$}\label{a1}
    \textit{$f:[0,\infty)\to [0,\infty)$ is continuous and strictly increasing, with $f(0)=0$ and $f(r)\to \infty$ as $r\to\infty$.}
\end{enumerate}
In the classical spirit, we assume the Keller-Osserman condition associated with the Finsler p-Laplace equation: Define the primitive of $f$ by
\begin{equation}\label{eqn primitive of f}
    F(x)=\int_{0}^{x}f(s)\ ds.
\end{equation}
Then we assume
\begin{enumerate}
    \myitem{$\textbf{(KO)}$} \label{KO}$ \Psi_{H,p}(r)=\left(\frac{p-1}{p}\right)^{1/p}\int_{r}^{\infty}\frac{ds}{\left\{F(s)\right\}^{1/p}}<\infty, \quad \text{for all }~ r>0$.
\end{enumerate}
\begin{remark}
    Remarkably, despite the anisotropic nature of the operator, the Keller–Osserman condition retains the same integrability structure as in the p-Laplacian case. This is because all anisotropic norms are equivalent to the Euclidean norm. The above Keller–Osserman condition coincides with the Keller–Osserman condition for the p-Laplacian; see \cite{DattaIndro2026}. See also Remark \ref{remark 1}, and proof of Theorem \ref{existence on annulus} or Proposition \ref{prop annuls asym} in Section \ref{asym}. 
    \end{remark}
    \begin{example}\label{example f=tq}
         If $f(t)=t^{q}$, then \ref{KO} implies $q>p-1$. And, 
    \begin{equation*}
        \Psi_{H,p}(t)= \left(\frac{(p-1)(q+1)}{p}\right)^{1/p}\frac{p}{q+1-p}\frac{1}{t^{\frac{q+1-p}{p}}}.
    \end{equation*}
    \end{example}
We further consider the two possibilities about the integrability of $\Psi_{H,p}$ near $0$:
\begin{enumerate}
        \myitem{$\textbf{(A1)}$}\label{i1} $\int_{0^{+}}\frac{ds}{\{F(s)\}^{1/p}}=\infty$ ($Osgood~ Condition$).
    \myitem{$\textbf{(A2)}$}\label{i2} $\int_{0^{+}}\frac{ds}{\{F(s)\}^{1/p}}<\infty$.
\end{enumerate}

\subsection{Main results}
Consider the following notion of solution.
\begin{definition}\label{dfn solution}
    A function $u\in W_{loc}^{1,p}(\Omega)$ is said to solve 
    \begin{equation}\label{finsler eqn}
         \text{div}\left(H^{p-1}(\nabla u)\nabla H(\nabla u)\right)=f(u)\quad \text{in }\Omega,
    \end{equation}
    if for every pre-compact set $\Omega'\subset\subset \Omega$ 
    \begin{equation*}
        -\int_{\Omega'}\Fin{u}\cdot \D \phi\ dx=\int_{\Omega'}f(u)\phi\ dx \quad \text{for all }\phi\in W_{0}^{1,p}(\Omega').
    \end{equation*}
    provided $f(u)\in L_{loc}^{p'}(\Omega)$.
\end{definition}
\begin{remark}
    For a fixed $u\in W_{loc}^{1,p}(\Omega)$, the mapping $v\mapsto \int_{\Omega}\Fin{u}\cdot \nabla v $ for any $v\in W^{1,p'}(\Omega)$ is bounded and linear   due to Lemma \ref{lemma_properties of H} below.
\end{remark}

Comparison principle plays an important role in proving both existence and asymptotic results. We prove a comparison principle 

\begin{theorem}\label{comparison principle}
    Let $\Omega$ be a bounded domain in $\rn$ with $C^{2}$ boundary, $p\geq 2$, and $u,v\in W_{loc}^{1,p}(\Omega)\cap C(\Omega)$ such that 
    \begin{equation}\label{ineq for comp princ}
       - \text{div}\left(\Fin{u}\right)+f(u)\leq -\text{div}\left(\Fin{v}\right)+f(v) \quad\text{weakly}.
    \end{equation}
  Further, suppose that $u,v$ satisfy 
    \begin{equation}\label{boundary condition for comp princ}
        \limsup_{x\to z}\frac{u(x)}{v(x)}\leq 1,\quad \text{uniformly for all }z\in \partial\Omega.
    \end{equation}
    Then $u\leq v$ in $\Omega$.
    \end{theorem}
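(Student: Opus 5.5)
We argue by contradiction through a scaling device. The boundary condition \eqref{boundary condition for comp princ} is multiplicative rather than additive, so fix $\varepsilon>0$ and set $v_\varepsilon=(1+\varepsilon)v$. By the uniform $\limsup$ there is $\delta>0$ with $u<(1+\varepsilon)v$ on $\Omega_\delta=\{x\in\Omega:\operatorname{dist}(x,\partial\Omega)<\delta\}$. We then show $u\le (1+\varepsilon)v$ in $\Omega$ and let $\varepsilon\to0$.

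\textbf{Step 1: $v_\varepsilon$ is a supersolution.} By $(p-1)$-homogeneity of $\xi\mapsto H^{p-1}(\xi)\nabla H(\xi)$, which follows from the $1$-homogeneity of $H$ and $0$-homogeneity of $\nabla H$,
\[
-\text{div}\left(\Fin{v_\varepsilon}\right)=(1+\varepsilon)^{p-1}\left(-\text{div}\left(\Fin{v}\right)\right).
\]
Here we must note that $v\ge0$ (we are dealing with positive solutions; if $v$ may be negative, restrict to the positive part). We also need $f(u)\le \text{div}(\Fin{u})+\ldots$ to be compared against $(1+\varepsilon)^{p-1}f(v)\le f((1+\varepsilon)v)$. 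The latter inequality is exactly what is required, and it does not follow from \ref{a1} alone. Two routes are available. One can assume, as is standard in this literature, that $f(t)/t^{p-1}$ is nondecreasing. Alternatively, one can avoid scaling entirely and use the additive shift $v+\varepsilon'$: on the compact set $K=\Omega\setminus\Omega_\delta$ the relevant quantity is bounded, and $f(v+\varepsilon')\ge f(v)$ by monotonicity, so $v+\varepsilon'$ remains a supersolution since the operator is translation invariant. I would take the second route where possible. Since $v\to\infty$ at $\partial\Omega$ in the applications, the condition $u\le(1+\varepsilon)v$ near the boundary gives $u-v\le \varepsilon v$ there, which alone does not yield $u\le v+c$. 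So the multiplicative scaling together with an $f(t)/t^{p-1}$-type monotonicity is genuinely needed, and this is the main obstacle. I would first check whether the paper's intended $f$ satisfies $(1+\varepsilon)^{p-1}f(t)\le f((1+\varepsilon)t)$, and if so invoke it.

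\textbf{Step 2: testing with $(u-v_\varepsilon)^+$.} Set $w=v_\varepsilon$. Then $\phi=(u-w)^+$ vanishes on $\Omega_\delta$, so its support lies in a compact $\Omega'\subset\subset\Omega$ and $\phi\in W^{1,p}_0(\Omega')$; continuity guarantees this is admissible in Definition~\ref{dfn solution}. Subtracting the weak inequalities gives
\[
\int_{\{u>w\}}\bigl(\Fin{u}-\Fin{w}\bigr)\cdot\nabla(u-w)\,dx+\int_{\{u>w\}}(f(u)-f(w))(u-w)\,dx\le0 .
\]

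\textbf{Step 3: conclusion.} Both integrands are nonnegative. The second is nonnegative because $f$ is increasing. The first is nonnegative because of the strict monotonicity of the map $\xi\mapsto H^{p-1}(\xi)\nabla H(\xi)$: for $p\ge2$ it is the gradient of the strictly convex function $H^p/p$, and strong convexity gives $\ge c|\xi-\eta|^p$. This is Lemma~\ref{lemma_properties of H}. Hence $(f(u)-f(w))(u-w)=0$ a.e. on $\{u>w\}$. Strict monotonicity of $f$ then forces $|\{u>w\}|=0$, and by continuity $u\le(1+\varepsilon)v$. Letting $\varepsilon\to0$ gives $u\le v$.
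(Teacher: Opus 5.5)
There is a genuine gap, and it sits exactly where you flag it in Step 1 and then leave it open. Hypothesis \ref{a1} does not give $(1+\varepsilon)^{p-1}f(t)\le f((1+\varepsilon)t)$. A strictly increasing $f$ may be almost flat on long intervals and still satisfy \ref{KO}, so $(1+\varepsilon)v$ need not be a supersolution. Deferring to ``the paper's intended $f$'' does not prove the theorem, which is stated for every $f$ satisfying \ref{a1}.

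There is a second problem. Steps 1--2 treat $u$ as a subsolution and $v$ as a supersolution separately (``subtracting the weak inequalities''), whereas \eqref{ineq for comp princ} is a single joint inequality. Write $a(\xi)=H^{p-1}(\xi)\nabla H(\xi)$. Putting $(1+\varepsilon)v$ in place of $v$ on the right-hand side would require $\bigl((1+\varepsilon)^{p-1}-1\bigr)\operatorname{div}a(\nabla v)\le f((1+\varepsilon)v)-f(v)$. That is an upper bound on $\operatorname{div}a(\nabla v)$, which the hypothesis does not supply.

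The paper's proof never asks $(1+\varepsilon)v$ to be a supersolution. Arguing by contradiction, it fixes a ball $B$ with $u>v$ on $\overline{B}$, tests the joint inequality with $\phi_\varepsilon=(u-(1+\varepsilon)v)^+$, and splits
\[
a(\nabla u)-a(\nabla v)=\bigl[a(\nabla u)-a((1+\varepsilon)\nabla v)\bigr]+\bigl((1+\varepsilon)^{p-1}-1\bigr)a(\nabla v).
\]
The bracket paired with $\nabla\phi_\varepsilon$ is nonnegative by Lemma~\ref{lemma monotone operator}. The zeroth-order term stays $f(u)-f(v)$, which is positive where $\phi_\varepsilon>0$ because there $u>(1+\varepsilon)v\ge v$. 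This gives
\[
\int_B\bigl(f(u)-f(v)\bigr)\phi_\varepsilon\,dx\le-\bigl((1+\varepsilon)^{p-1}-1\bigr)\int_{\Omega'}\langle a(\nabla v),\nabla\phi_\varepsilon\rangle\,dx .
\]
Letting $\varepsilon\to0$ (Fatou on the left) yields $\int_B(f(u)-f(v))(u-v)\,dx\le0$, contradicting strict monotonicity of $f$. The scaling mismatch is thus paid for by an $O(\varepsilon)$ error rather than by a growth condition on $f$.

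Be aware, though, that this error is $O(\varepsilon)$ only because $\Omega'$ lies in the fixed compact set $\Omega\setminus\Omega_\delta$, with $\delta$ taken independent of $\varepsilon$. The $\limsup$ hypothesis by itself gives only $\delta=\delta(\varepsilon)$, as in your setup. Without that uniformity the error term is no longer evidently $o(1)$, and you are pushed back toward the $f(t)/t^{p-1}$-type condition you suspected.

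For the uniqueness application (Theorem~\ref{thrm uniqueness}), where $f(t)=t^q$ with $q>p-1$ and $u,v$ are both genuine solutions, that condition does hold. There your scaling argument is complete as written.
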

 We refer to Definition \ref{dfn solution} for the weak notion. Further, we prove the following existence result
  \begin{theorem}\label{main theorem existence}
     Let $\Omega$ be a bounded domain in $\rn$ with $C^{2}$ boundary. Assume \ref{a1}
 and \ref{KO} then \eqref{finsler large eqn} admits a solution in $W^{1,p}_{loc}(\Omega)$.
 \end{theorem}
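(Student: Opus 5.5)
The plan follows the classical Keller--Osserman scheme: monotone approximation by Dirichlet problems with finite boundary data, plus a uniform local upper bound coming from radial supersolutions on Wulff balls.

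\emph{Approximating problems.} For each $k\in\mathbb{N}$ I would solve
\begin{equation*}
\Delta_H^p u_k=f(u_k)\ \text{in }\Omega,\qquad u_k=k\ \text{on }\partial\Omega .
\end{equation*}
Existence follows by minimising the functional $\int_\Omega \tfrac1p H^p(\nabla u)+F(u)\,dx$ over $k+W^{1,p}_0(\Omega)$. This functional is coercive and weakly lower semicontinuous, because $H^p$ is convex and equivalent to $|\cdot|^p$, and $F\ge 0$. Since $f$ is monotone, the truncation $\min(u,k)$ lowers the energy, so we may assume $0\le u_k\le k$. Interior $C^{1,\alpha}$ regularity for the Finsler $p$-Laplacian then gives $u_k\in C(\Omega)$. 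Applying Theorem \ref{comparison principle} to $u_k$ and $u_{k+1}$, whose boundary ratio is $k/(k+1)<1$, shows $u_k\le u_{k+1}$. So the sequence is nondecreasing.

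\emph{Uniform local upper bound.} This is the main obstacle. Let $H^0$ be the dual norm and $W_R(x_0)=\{H^0(x-x_0)<R\}$ the Wulff ball. I would look for a radial supersolution $w(x)=\phi(H^0(x-x_0))$, using the identities $H(\nabla H^0(\xi))=1$ and $H(\phi'\nabla H^0)\nabla H(\nabla H^0)=\phi'\, \xi/H^0(\xi)$. With these, $\Delta_H^p w$ reduces to the one-dimensional operator $r^{1-n}\big(r^{n-1}|\phi'|^{p-2}\phi'\big)'$. This is exactly where the remark that \ref{KO} keeps the $p$-Laplacian form comes from.

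I would then solve the ODE
\begin{equation*}
\big(r^{n-1}(\phi')^{p-1}\big)'=r^{n-1}f(\phi),\qquad \phi'(0)=0,\ \phi(0)=a,
\end{equation*}
and show that \ref{KO} forces blow-up at a finite radius $R(a)$. The key ODE estimate is $(\phi')^{p}\le \tfrac{p}{p-1}\big(F(\phi)-F(a)\big)$, obtained by multiplying by $\phi'$ and discarding the nonnegative term $\tfrac{n-1}{r}(\phi')^{p-1}$. This gives
\begin{equation*}
R(a)\ge \Big(\tfrac{p-1}{p}\Big)^{1/p}\int_a^\infty \big(F(s)-F(a)\big)^{-1/p}\,ds .
\end{equation*}
A matching upper bound, $R(a)<\infty$ with $R(a)\to 0$ as $a\to\infty$, comes from the comparison $(r^{n-1}(\phi')^{p-1})'\ge r^{n-1}f(\phi)$ together with a scaling argument, in the spirit of \cite{DattaIndro2026}. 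Continuity in $a$ then gives, for every small $R$, some $a$ with $R(a)=R$, i.e.\ a large solution $V_R$ on $W_R(x_0)$. This is the content of Theorem \ref{existence on annulus}/Proposition \ref{prop annuls asym}, which I would invoke if available.

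\emph{Passing to the limit.} For a compact $K\subset\Omega$, choose $R$ with $W_R(x_0)\subset\Omega$ for all $x_0\in K$. On each such ball $u_k/V_R\to 0$ at $\partial W_R$, so Theorem \ref{comparison principle} gives $u_k\le V_R$. Hence $u_k\le V_R(x_0)=\phi(0)$ on $K$, uniformly in $k$. Caccioppoli estimates and the interior $C^{1,\alpha}$ estimates then give local uniform bounds for $u_k$ in $C^{1,\alpha}$. Along a subsequence (in fact the whole monotone sequence), $u_k\to u$ in $C^1_{\rm loc}$. Passing to the limit in the weak formulation of Definition \ref{dfn solution} uses the continuity of $\xi\mapsto H^{p-1}(\xi)\nabla H(\xi)$ and of $f$.

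\emph{Boundary blow-up.} Fix $M>0$. Since $u\ge u_k$ and $u_k$ is continuous up to $\partial\Omega$ with $u_k=k$ there, we get $u\ge k-1$ near $\partial\Omega$ for every $k$. Therefore $u(x)\to\infty$ as $\mathrm{dist}(x,\partial\Omega)\to0$, and $u$ solves \eqref{finsler large eqn} in $W^{1,p}_{loc}(\Omega)$.
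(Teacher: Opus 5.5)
Your proposal is correct in outline and follows the paper's scheme: Dirichlet approximations with data $k$, monotonicity from Theorem \ref{comparison principle}, a uniform interior bound from a Wulff-ball barrier, and passage to the limit. Two ingredients differ, and one key step is only asserted.

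\textbf{The barrier.} The paper (Proposition \ref{prop_weak_local_bound}) recycles the \emph{unweighted} one-dimensional large solution $\omega$ of Section \ref{1d}, setting $v=\omega(R-H_0(x-x_0))$. This avoids the singular ODE at $r=0$. You instead solve the weighted radial ODE, and yours is the more robust choice. With $\phi(r)=\omega(R-r)$, the radial reduction you wrote down gives $\Delta_H^p v=f(v)+\frac{n-1}{r}(\phi')^{p-1}\ge f(v)$. That is the subsolution direction, so the one-dimensional profile only becomes an upper barrier after replacing $f$ by $f/n$. An exact radial solution needs no such adjustment.

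\textbf{The limit.} The paper uses only the Caccioppoli bound of Proposition \ref{prop_grad lp bound}, weak compactness, and a Browder--Minty argument based on Lemma \ref{lemma monotone operator}. You use interior $C^{1,\alpha}$ estimates instead. These are available here (cf.\ the Appendix) and are uniform in $k$, because they depend only on local bounds for $u_k$ and $f(u_k)$. This buys $C^1_{\rm loc}$ convergence and an immediate limit of the flux, at the cost of importing regularity theory.

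\textbf{Boundary blow-up.} Your last paragraph checks that $u\to\infty$ at $\partial\Omega$, which the paper's proof leaves unstated. This step, like your use of the ratio $k/(k+1)$, needs $u_k\in C(\overline\Omega)$, which you must justify by boundary regularity on $C^2$ domains. Monotonicity alone can instead be obtained by testing with $(u_k-u_{k+1})^+\in W^{1,p}_0(\Omega)$.

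\textbf{The asserted step.} Your estimate $(\phi')^p\le\frac{p}{p-1}(F(\phi)-F(a))$ bounds $R(a)$ from \emph{below}, and it keeps $\phi'$ finite while $\phi$ is finite. The barrier, however, needs $R(a)<\infty$ and $R(a)\to0$. Your ``comparison $(r^{n-1}(\phi')^{p-1})'\ge r^{n-1}f(\phi)$'' is just the equation and gives neither. The missing idea is to absorb the first-order term using the monotonicity of $\phi$:
\begin{equation*}
r^{n-1}(\phi')^{p-1}(r)=\int_0^r s^{n-1}f(\phi(s))\,ds\le\frac{r^n}{n}f(\phi(r)),\quad\text{so}\quad \big((\phi')^{p-1}\big)'\ge\frac1n f(\phi).
\end{equation*}
Multiplying by $\phi'$ and integrating as in Section \ref{1d} gives
\begin{equation*}
R(a)\le n^{1/p}\Big(\frac{p-1}{p}\Big)^{1/p}\int_a^\infty\frac{ds}{\{F(s)-F(a)\}^{1/p}}.
\end{equation*}
This is finite by \ref{KO}; near $s=a$ use $F(s)-F(a)\ge f(a)(s-a)$ with $f(a)>0$. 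It tends to $0$ as $a\to\infty$ by dominated convergence, since $F(s+a)-F(a)\ge\max\{sf(a),F(s)\}$. Then just pick $a$ with $R(a)\le R$. Continuity of $a\mapsto R(a)$ is unnecessary, and it is not automatic for merely continuous $f$.

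\textbf{A citation to drop.} Do not appeal to Theorem \ref{existence on annulus} or Proposition \ref{prop annuls asym}. They concern the annulus problem with zero outer data, not a large solution on a full Wulff ball. Their proofs also use the local bound and the existence scheme of the very theorem you are proving.
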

 Next, we prove the boundary asymptotic for any large solutions. Before stating the result, the dual norm $H_{0}$ of $H$ is 
\begin{equation}\label{dual dfn}
    H_{0}\left(\xi\right):=\displaystyle\sup_{x\neq 0}\frac{\left<\xi, x\right>}{H\left(x\right)} \quad \text{for all } \xi\in \R^{n}.
\end{equation}
 \begin{theorem}\label{dual asym}
   Let $u$ be a solution of \eqref{finsler large eqn} on a bounded domain $\Omega$ with $C^{2}$ boundary. Let $\delta_{H_{0}}(x):=\min\{H_{0}(x-z)~|~z\in \partial\Omega\}$ 
    Then
       \begin{equation*}
        \lim_{\delta_{H_{0}}(x)\to 0}\frac{\Psi_{H,p}(u(x))}{\delta_{H_{0}}(x)}= 1.
    \end{equation*}   
 \end{theorem}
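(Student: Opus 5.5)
The plan is to squeeze $u$ near each boundary point between radially symmetric (in the anisotropic sense) large solutions on Wulff-shaped domains, using the comparison principle of Theorem \ref{comparison principle}. The natural model domains are the Wulff balls $W_R(x_0)=\{x: H_0(x-x_0)<R\}$ and Wulff annuli $\{R_1<H_0(x-x_0)<R_2\}$. The reason is the standard identity $H(\nabla H_0(x))=1$ for $x\neq 0$ together with $H^{p-1}(\nabla w)\nabla H(\nabla w)=\phi'(r)^{p-1}\,\nabla H_0(x)\cdot(\text{sign})$ for $w=\phi(H_0(x-x_0))$, $r=H_0(x-x_0)$. With these, $\Delta_H^p w=(|\phi'|^{p-2}\phi')'+\frac{n-1}{r}|\phi'|^{p-2}\phi'$. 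So the problem reduces to the same radial ODE as for the $p$-Laplacian, with $r$ the anisotropic radius.

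\textbf{Step 1: the one-dimensional profile.} I would first study the ODE $(|\phi'|^{p-2}\phi')'+\frac{n-1}{r}|\phi'|^{p-2}\phi'=f(\phi)$ on $(0,R)$ with $\phi(r)\to\infty$ as $r\to R$. For the Wulff ball, $\phi'\ge 0$ near $R$. Multiplying by $\phi'$ and integrating gives
\[
\frac{p-1}{p}\phi'^p\le F(\phi)\le \frac{p-1}{p}\phi'^p+C\int \frac{\phi'^p}{r}.
\]
From this one derives $\Psi_{H,p}(\phi(r))/(R-r)\to 1$ as $r\to R$; I expect this to be the content of Proposition \ref{prop annuls asym}. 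The lower-order term must be handled using (KO) and the monotonicity of $f$. The analogous statement holds for annuli, both at the outer boundary, where the solution is increasing, and at the inner boundary, where the solution blows up as $r\downarrow R_1$ and the first-order term has a favourable sign.

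\textbf{Step 2: upper bound via interior Wulff balls.} Since $\partial\Omega$ is $C^2$, it satisfies a uniform interior Wulff-ball condition of some radius $R_0$. For $x$ near $\partial\Omega$, choose $z\in\partial\Omega$ realizing $\delta_{H_0}(x)$. Then $x$ lies on the segment from $z$ toward the centre $x_0$ of a Wulff ball $W_{R}(x_0)\subset\Omega$ tangent at $z$, with $H_0(x-x_0)=R-\delta_{H_0}(x)$. I would need to check this geometric claim carefully, since $H_0$ need not be symmetric. Comparison on $W_R(x_0)$ gives $u\le \phi_R$, where $\phi_R$ is the large solution of the ball: $u$ is finite on $\partial W_R$, so the boundary condition \eqref{boundary condition for comp princ} holds. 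Because $\Psi_{H,p}$ is decreasing, this yields
\[
\Psi_{H,p}(u(x))\ge\Psi_{H,p}(\phi_R(x)).
\]
Dividing by $\delta_{H_0}(x)=R-H_0(x-x_0)$ and applying Step 1 uniformly in the centre (by translation invariance) gives $\liminf\ge 1$.

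\textbf{Step 3: lower bound via exterior Wulff annuli.} By the uniform exterior Wulff-ball condition, for $z\in\partial\Omega$ there is $y_0$ with $W_{R_1}(y_0)\cap\Omega=\emptyset$ and $z\in\partial W_{R_1}(y_0)$. Take $R_2$ large so that $\Omega\subset W_{R_2}(y_0)$, and let $\psi$ be the large solution of the annulus $A=\{R_1<H_0(x-y_0)<R_2\}$. Then $\Omega\subset A$. Since $u\to\infty$ on $\partial\Omega$ while $\psi$ is finite on $\partial\Omega\setminus\{z\}$, comparison on $\Omega$ gives $\psi\le u$.

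This needs care. At the tangency point $z$ both functions blow up, so \eqref{boundary condition for comp princ} may fail. I would fix this by comparing with the annulus of radii $R_1-\varepsilon$, or by shifting $\psi$ inward, and then letting $\varepsilon\to0$. The resulting inequality $\Psi_{H,p}(u(x))\le\Psi_{H,p}(\psi(x))$, where $H_0(x-y_0)-R_1$ relates to $\delta_{H_0}(x)$ at the nearest point (again a geometry claim involving the reversed norm), gives $\limsup\le 1$ after Step 1.

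The main obstacle is Step 1 together with the uniformity needed in Steps 2–3. Getting the exact constant $1$, rather than bounds up to constants, requires the ODE asymptotics to absorb the $\frac{n-1}{r}$ term. It also requires the tangency geometry between $H_0$-distance and Wulff balls to be exact, given that $H_0$ is only a non-symmetric Minkowski norm.
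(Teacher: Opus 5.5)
Your architecture matches the paper's: interior Wulff balls for the $\liminf$, exterior Wulff annuli carrying a radial solution for the $\limsup$, Theorem~\ref{comparison principle}, and ODE asymptotics in $\rho=H_0(x-x_0)$. Your interior barrier differs from the paper's, and yours is the sound choice. In Proposition~\ref{prop_weak_local_bound}, reused in Lemma~\ref{lemma asym 1}, the paper uses the one-dimensional profile $v=\omega(R-H_0(x-x_0))$ as an upper barrier. For $g(\rho)=\omega(R-\rho)$ one has $g'\ge0$, so
\[
\Delta_H^p v=f(v)+\tfrac{n-1}{\rho}(g')^{p-1}\ge f(v).
\]
That function is therefore a subsolution, not a supersolution. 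The leading minus sign is dropped in the change of variables in that proof. A genuine radial large solution $\phi_R$ of the Wulff ball is what the $\liminf$ requires, with the unfavourable $\frac{n-1}{r}$ term absorbed. Your (KO) absorption is correct. Since $F$ is increasing, $\frac s2F(s)^{-1/p}\le\int_{s/2}^sF^{-1/p}\to0$, and hence $\int^sF^{(p-1)/p}\le sF(s)^{(p-1)/p}=o(F(s))$. You must still construct $\phi_R$: Theorem~\ref{main theorem existence} gives no solution that is a function of $H_0(x-x_0)$. You could use radial Dirichlet approximations via an ODE boundary value problem, as in Theorem~\ref{existence on annulus}.

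Two of your steps do not go through as written. In Step 2 you say $u$ is finite on $\partial\mathcal{W}_R(x_0)$, so \eqref{boundary condition for comp princ} holds. This fails at the tangency point $z$, where $u$ also blows up, and $\limsup u/\phi_R\le1$ at $z$ is exactly what you are trying to prove. You flag this in Step 3 but not here. Compare instead on $\mathcal{W}_{R-r}(x_0)\subset\subset\Omega$, as the paper does, and let $r\to0$. This needs your Step 1 bound $\Psi_{H,p}(\phi_\rho(s))\ge(1-\epsilon)(\rho-s)$ for $\rho-s<\delta$, uniformly in $\rho\in[R/2,R]$. That uniformity follows from your identity, using $\phi_\rho\le\phi_{R/2}$ on $\mathcal{W}_{R/2}(x_0)$ and the trivial bound $\Psi_{H,p}(\phi_\rho(s))\le\rho-s$.

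Your claim that the first-order term has a favourable sign at the inner boundary of the annulus is wrong for the inequality you need. Near $R_1$ we have $\psi'<0$, so $(|\psi'|^{p-2}\psi')'=f(\psi)+\frac{n-1}{r}|\psi'|^{p-1}$. Multiplying by $\psi'$ and integrating over $[t,t_0]$ gives
\begin{equation*}
\tfrac{p-1}{p}|\psi'(t)|^p=\tfrac{p-1}{p}|\psi'(t_0)|^p+F(\psi(t))-F(\psi(t_0))+(n-1)\int_t^{t_0}\frac{|\psi'|^p}{r}\,dr .
\end{equation*}
Since $\Psi_{H,p}(\psi(t))=(\frac{p-1}{p})^{1/p}\int_{R_1}^t|\psi'|F(\psi)^{-1/p}$, the bound $\limsup\le1$ needs an \emph{upper} bound on $|\psi'|$. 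The last term pushes $|\psi'|$ up, so it works against you, exactly as on the ball. The paper absorbs it in Proposition~\ref{prop annuls asym} with the factor $t^{p(n-1)/(p-1)}$. It takes $t_0$ so close to $R_1$ that $(t_0/R_1)^{p(n-1)/(p-1)}-1<\epsilon/2$, and uses $F(w(t))\to\infty$ to swallow $|w'(t_0)|^p$. Because $R_1>0$ this perturbation is harmless, but it is a required step, not a free sign.

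Finally, your worry about asymmetry is moot: the paper assumes $H(tx)=|t|H(x)$, so $H$ and $H_0$ are even. Then \eqref{rep of x in Omega mu}--\eqref{rep of center of ball} give exactly $H_0(x-x^{int})=R-\delta_{H_0}(x)$ and $H_0(x-x^{ext})=R+\delta_{H_0}(x)$.
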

 Finally, defining $\Phi:=\Psi^{-1}$, we prove the following boundary blow-up result.
  \begin{corollary}\label{thrm asym for f=tq}
        Let $\Omega$ be a domain with $C^{2}$ boundary, and let $u$ be a solution of \eqref{finsler large eqn} with $f(t)=t^{q}$, $q>p-1$. Then 
        \begin{equation}\label{asym f=tq}
            \lim_{\delta_{H_{0}}(x)\to 0}\frac{u(x)}{\Phi(\delta_{H_{0}}(x))}=1.
        \end{equation}
    \end{corollary}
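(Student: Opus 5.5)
The corollary should follow from Theorem \ref{dual asym} once we use the explicit form of $\Psi_{H,p}$ for power nonlinearities from Example \ref{example f=tq}. For $f(t)=t^q$ with $q>p-1$, hypotheses \ref{a1} and \ref{KO} hold, so Theorem \ref{dual asym} applies to any solution $u$ of \eqref{finsler large eqn}. It gives
\begin{equation*}
\lim_{\delta_{H_0}(x)\to 0}\frac{\Psi_{H,p}(u(x))}{\delta_{H_0}(x)}=1 .
\end{equation*}

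The explicit formula is $\Psi_{H,p}(t)=C\,t^{-\alpha}$, where $\alpha=\frac{q+1-p}{p}>0$ and
\begin{equation*}
C=\left(\frac{(p-1)(q+1)}{p}\right)^{1/p}\frac{p}{q+1-p}.
\end{equation*}
Hence $\Psi$ is a strictly decreasing bijection of $(0,\infty)$, and its inverse is $\Phi(s)=(C/s)^{1/\alpha}$.

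Next we rewrite the ratio in the corollary. Write $\delta=\delta_{H_0}(x)$. Then
\begin{equation*}
\frac{u(x)}{\Phi(\delta)}=u(x)\Big(\frac{\delta}{C}\Big)^{1/\alpha}=\Big(\frac{C\,u(x)^{-\alpha}}{\delta}\Big)^{-1/\alpha}=\Big(\frac{\Psi_{H,p}(u(x))}{\delta}\Big)^{-1/\alpha}.
\end{equation*}
The map $s\mapsto s^{-1/\alpha}$ is continuous at $s=1$. Passing to the limit using Theorem \ref{dual asym} therefore gives \eqref{asym f=tq}.

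The only point that needs care is that $u$ is a genuine large solution, so $u(x)\to\infty$ as $\delta_{H_0}(x)\to0$. This holds because $H_0$ is equivalent to the Euclidean norm, so $\delta_{H_0}\to0$ if and only if $\mathrm{dist}(x,\partial\Omega)\to0$. Positivity of $u$ near $\partial\Omega$ then makes $\Psi_{H,p}(u(x))$ well defined there. With these observations the argument is immediate, and the real work lies entirely in Theorem \ref{dual asym}.
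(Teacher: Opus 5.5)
Your proposal is correct and is essentially the paper's argument. Both deduce the corollary from Theorem \ref{dual asym} via the power-law scaling of $\Psi_{H,p}$: the paper sandwiches $(1-\varepsilon)\delta_{H_0}(x)<\Psi_{H,p}(u(x))<(1+\varepsilon)\delta_{H_0}(x)$ and applies the decreasing inverse $\Phi$, whereas you use the exact identity $u(x)/\Phi(\delta_{H_0}(x))=\bigl(\Psi_{H,p}(u(x))/\delta_{H_0}(x)\bigr)^{-1/\alpha}$ together with continuity, which is the same computation (and $q>p-1$ also gives the Osgood condition \ref{i1}, which the proof of Theorem \ref{dual asym} uses through Proposition \ref{prop annuls asym}).
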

    As a consequence, we obtain the following uniqueness result.
 \begin{theorem}\label{thrm uniqueness}
     Let $\Omega$ be a bounded domain with $C^{2}$ boundary, then the equation \eqref{finsler large eqn} with $f(t)=t^{q}$, $q>p-1$ admits exactly one solution.
 \end{theorem}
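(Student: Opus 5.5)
Existence of at least one solution follows from Theorem \ref{main theorem existence}, since $f(t)=t^{q}$ with $q>p-1$ satisfies \ref{a1} and, by Example \ref{example f=tq}, also \ref{KO}. The work is therefore in uniqueness. Let $u_{1},u_{2}$ be two solutions of \eqref{finsler large eqn} with $f(t)=t^{q}$. By Corollary \ref{thrm asym for f=tq}, both satisfy
\begin{equation*}
\lim_{\delta_{H_{0}}(x)\to 0}\frac{u_{i}(x)}{\Phi(\delta_{H_{0}}(x))}=1,\quad i=1,2,
\end{equation*}
and hence
\begin{equation*}
\lim_{\delta_{H_{0}}(x)\to 0}\frac{u_{1}(x)}{u_{2}(x)}=1 .
\end{equation*}
Since $H_{0}$ is equivalent to the Euclidean norm, $\delta_{H_{0}}(x)\to 0$ exactly when $\operatorname{dist}(x,\partial\Omega)\to 0$. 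This limit is uniform in $x$, because it comes from the single parameter $\delta_{H_{0}}(x)$.

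The ratio tending to $1$ does not give $u_{1}\le u_{2}+o(1)$, so I will use the homogeneity of the power nonlinearity instead. Fix $\varepsilon\in(0,1)$ and set $w=(1+\varepsilon)u_{2}$. The operator $\Delta_{H}^{p}$ is $(p-1)$-homogeneous, since $H^{p-1}$ is homogeneous of degree $p-1$ and $\nabla H$ of degree $0$. Therefore
\begin{equation*}
\text{div}\bigl(H^{p-1}(\nabla w)\nabla H(\nabla w)\bigr)=(1+\varepsilon)^{p-1}u_{2}^{q}=(1+\varepsilon)^{p-1-q}w^{q}\le w^{q},
\end{equation*}
weakly, where the inequality uses $q>p-1$. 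So $w$ is a supersolution, that is, $-\Delta_{H}^{p}w+w^{q}\ge 0=-\Delta_{H}^{p}u_{1}+u_{1}^{q}$ weakly. For the boundary condition we have
\begin{equation*}
\limsup_{x\to z}\frac{u_{1}(x)}{w(x)}=\frac{1}{1+\varepsilon}\le 1
\end{equation*}
uniformly in $z\in\partial\Omega$. Both functions lie in $W^{1,p}_{loc}(\Omega)\cap C(\Omega)$, using the regularity of solutions produced in the paper, or the $C^{1,\alpha}$ interior regularity for Finsler $p$-Laplace type equations. Theorem \ref{comparison principle} then gives $u_{1}\le(1+\varepsilon)u_{2}$. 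Letting $\varepsilon\to0$ yields $u_{1}\le u_{2}$, and exchanging the roles of $u_{1}$ and $u_{2}$ gives equality.

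The main obstacle is checking that the hypotheses of Theorem \ref{comparison principle} hold in exactly the form stated. The key point is that the inequality \eqref{ineq for comp princ} must be verified in the weak sense of Definition \ref{dfn solution} for the scaled function $w$. Scaling does preserve the weak formulation, since $\nabla w=(1+\varepsilon)\nabla u_{2}$ and the flux scales by $(1+\varepsilon)^{p-1}$. One also needs continuity of $u_{i}$ in $\Omega$. If the comparison principle requires strict boundary inequality, one can instead compare $u_{1}$ with $(1+2\varepsilon)u_{2}$, for which the ratio limit is $1/(1+2\varepsilon)<1$.
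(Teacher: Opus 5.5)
Your proof is correct, but it takes a detour that the paper does not. The paper's argument is a single step. Corollary~\ref{thrm asym for f=tq} gives $u(x)/v(x)\to 1$ as $\delta_{H_{0}}(x)\to 0$. Since Theorem~\ref{comparison principle} is stated under the non-strict hypothesis $\limsup_{x\to z}u/v\le 1$, it is applied directly to $u$ and $v$, and swapping roles gives uniqueness. Taken at face value, then, Theorem~\ref{comparison principle} already accepts a ratio limit equal to $1$, and your rescaling is not needed.

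What your route buys is robustness. Since $u_{1}/w\to(1+\varepsilon)^{-1}<1$ and $w\to\infty$, you get $u_{1}<w$ on a fixed strip $\Omega_{\delta}$. Comparison is then only needed in the elementary case where $(u_{1}-w)_{+}$ has compact support in $\Omega$, which follows from Lemma~\ref{lemma monotone operator} and strict monotonicity of $f$. The paper's proof of Theorem~\ref{comparison principle}, by contrast, asserts a single strip on which $u\le(1+\epsilon)v$ for all small $\epsilon$. A ratio limit exactly equal to $1$ does not by itself supply such a strip. Your use of the $(p-1)$-homogeneity of $\Delta_{H}^{p}$ together with $q>p-1$ (that is, $f(\lambda t)\ge\lambda^{p-1}f(t)$ for $\lambda>1$) is the structural input that closes this gap. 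It is the classical way to pass from $u_{1}/u_{2}\to 1$ to $u_{1}\le u_{2}$.

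You also verify existence via Theorem~\ref{main theorem existence}, which the paper's proof leaves implicit. Your fallback to $(1+2\varepsilon)u_{2}$ is unnecessary, since $(1+\varepsilon)^{-1}<1$ is already strict.
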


\subsection{Outline}
 The existence of solutions to the finite Dirichlet problem, obtained via standard minimisation techniques, together with a brief discussion of regularity, is presented in the Appendix, since they are not the main focus of the paper.

The proof of the existence theorem uses the Browder–Minty-type argument. The proof of the comparison principle follows a standard argument; however, for completeness, we provide a full proof, since, to the best of our knowledge, this version of the comparison principle is not available in the literature.
The proof of the local bound in Section \ref{higher dim} follows a similar approach to \cite{DattaIndro2026}, but our proof is a somewhat simplified with a slightly different choice of super solution. Moreover, since the symmetry of the solution is with respect to the underlying anisotropic norm, we obtain a local bound using Wulff balls. The proof of asymptotic behaviour relies heavily on the distance defined using the Minkowski norm, the Wulff shape, and the geometry of the domain in Minkowski space. Precisely, we need to compare the solution with the distance from the boundary with respect to $H_{0}$ norm. While doing so, one needs to use uniform ball conditions associated with the boundary of a $C^{2}$ domain in the Minkowski space; see the first two paragraphs in Section \ref{asym}.

The focus of this work is on extending the theory of large solutions to anisotropic operators of Finsler p-Laplacian type. While the Keller–Osserman condition remains unchanged, the analysis requires handling anisotropic geometry induced by the norm $H$ and its dual $H_{0}$. For example, the solution exhibits radial symmetry on a Wulff ball associated with $H_{0}$. Consequently, the barrier construction is on a similar $H_{0}$ Wulff ball of radius $r$. Similarly, the boundary asymptotics are reformulated in terms of the distance from the boundary induced by $H_{0}$. This is where the geometry of the Wulff shape, and the distance from a closed set in Minkowski space (\cite{CrastaMalusa2007}) play an important role.

   In Section \ref{prilms}, we present preliminary results concerning the Minkowski norm $H$ and its dual $H_{0}$, and establish the strict monotonicity of the Finsler p-Laplace operator. In Section \ref{1d}, we derive condition \ref{KO} in the one-dimensional setting and prove Theorem \ref{main theorem existence} for $n=1$; see also \cite{DattaIndro2026} for a similar construction for a second-order quasilinear equation. The proof of this existence theorem in one dimension follows the same approach as in \cite{DattaIndro2026} except some technical differences. In the same section, we prove Theorem \ref{dual asym} for $n=1$.
Subsequently, in Section \ref{higher dim}, we establish the comparison principle (see Theorem \ref{comparison principle}), derive estimates for the solution and its gradient, and prove the existence result stated in Theorem \ref{main theorem existence}. In Section \ref{asym}, we give the proof of Theorem \ref{dual asym} for $n\geq2$ and discuss how the anisotropy underlying the Finsler operator plays a vital role in the theory. In Section \ref{uniqueness}, we use the asymptotic results to prove Corollary  \ref{thrm asym for f=tq} and finally, the uniqueness result, Theorem \ref{thrm uniqueness}.

\section{Preliminaries}\label{prilms}
This section is devoted to the discussion related to the properties of $H$ and $H_{0}$ and the monotonicity of the Finsler p-Laplace operator. First, we fix some notation.

 \textbf{Notation:}
Let $\Omega$ be a bounded domain
\begin{itemize}
\item We denote $H_{0}$ to be the dual norm of $H$ as defined in \eqref{dual dfn}.
    \item The distance from the boundary of $\Omega$ with respect to the Euclidean norm is $\delta(x):=$dist$(x,\partial\Omega)$. And with respect to the dual norm $H_{0}$ is $\delta_{H_{0}}(x):=\inf\{H_{0}(x-z)~|~z\in \partial\Omega\}$.
    \item We denote $\theta_{1}=\min_{|x|=1}H(x)$ and $\theta_{2}=\max_{|x|=1}H(x)$.
    \item The ball with respect to $H_{0}$ is $\mathcal{W}_{r}(x_{0}):=\{x\in \R^{n}~|~ H_{0}(x-x_{0})<r\}$ for any $x_{0}\in \R^{n}$. In the literature $\mathcal{W}_{1}$ is known as \textit{Wulff Shape}. 
    \item For any $\delta>0$, we denote $\Omega_{\delta}:=\{x\in \Omega~|~\delta_{H_{0}}(x)<\delta\}$.
    \item By $A\subset\subset B$, we mean that $A$ is a pre-compact subset of $B$.
\end{itemize}

The following properties hold for $H$ and $H_{0}$.

 \begin{lemma}\label{lemma_properties of H}
    Let $H$ be a Minkowski norm. Then
    \begin{enumerate}
        \item For $\theta_{1}$ and $\theta_{2}$ as in the Notation, we have $\theta_{1}|z|\leq H\left(z\right)\leq \theta_{2}|z|$ and $0<\theta_{1}\leq \theta_{2}$.
        \item $H\left(x+y\right)\leq H\left(x\right)+H\left(y\right)$.  
\item There exists a constant $c>0$ such that $|\D H|\leq c$.
\item For any vector $x$, $\left<\D H(x), x\right>=H\left(x\right)$.
\end{enumerate}
\end{lemma}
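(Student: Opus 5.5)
The four items are elementary and I will prove them in order, using only the defining properties of a Minkowski norm: positivity, $1$-homogeneity, $C^{2}$ regularity off the origin, and strong convexity.

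\emph{Item (1).} By continuity of $H$ on the compact unit sphere, $\theta_{1}=\min_{|x|=1}H(x)$ and $\theta_{2}=\max_{|x|=1}H(x)$ are attained. Since $H>0$ away from the origin, $\theta_{1}>0$. For $z\neq0$, write $z=|z|\,(z/|z|)$ and apply homogeneity to get $\theta_{1}|z|\le H(z)\le\theta_{2}|z|$; the case $z=0$ is trivial.

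\emph{Item (4).} This is Euler's identity. For $x\neq0$, differentiate $H(tx)=tH(x)$ in $t$ at $t=1$, which gives $\langle\nabla H(x),x\rangle=H(x)$. At $x=0$ both sides vanish under the usual convention, reading $\nabla H(0)$ as any element of the subdifferential and noting the pairing with $0$ is $0$.

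\emph{Item (3).} $\nabla H$ is positively $0$-homogeneous: differentiating $H(tx)=tH(x)$ in $x$ gives $t\nabla H(tx)=t\nabla H(x)$ for $t>0$. For $t<0$ one gets $\nabla H(tx)=-\nabla H(x)$, since $H$ is even by $H(tx)=|t|H(x)$. Hence $|\nabla H|$ depends only on direction, and $\sup|\nabla H|=\max_{|x|=1}|\nabla H(x)|=:c<\infty$ because $\nabla H$ is continuous on the compact sphere. Alternatively, once convexity is known, $|\nabla H|\le\theta_{2}$ follows from the Lipschitz bound $|H(x)-H(y)|\le H(x-y)\le\theta_{2}|x-y|$, which uses items (1) and (2).

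\emph{Item (2).} This is the only step with real content: deducing the triangle inequality from strong convexity of $H^{2}/2$.

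1. Positive definiteness of $\nabla^{2}(H^{2}/2)$ on $\mathbb{R}^{n}\setminus\{0\}$ gives convexity of $G=H^{2}/2$ along any segment avoiding the origin.
2. $G$ is $C^{1}$ on all of $\mathbb{R}^{n}$, since $\nabla G=H\nabla H$ is bounded by $\theta_{2}H\to0$, and $G$ is continuous. Hence convexity extends to segments through $0$ by approximation: perturb the endpoints slightly so the segment misses the origin (possible when $n\ge2$, or handle the collinear case directly), then pass to the limit.
3. Thus $G$ is convex on $\mathbb{R}^{n}$, so its sublevel set $B=\{H\le1\}$ is convex.
4. For $x,y\neq0$, the points $x/H(x)$ and $y/H(y)$ lie in $B$. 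The convex combination
\[
\frac{x+y}{H(x)+H(y)}=\frac{H(x)}{H(x)+H(y)}\frac{x}{H(x)}+\frac{H(y)}{H(x)+H(y)}\frac{y}{H(y)}
\]
then lies in $B$. By homogeneity this gives $H(x+y)\le H(x)+H(y)$.

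The collinear case is immediate from homogeneity. I expect the passage through the origin in step 2 to be the only delicate point, and the approximation argument should suffice.
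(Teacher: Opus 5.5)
Your proof is correct. Items (1) and (4) match the paper's argument: homogeneity on the unit sphere, and Euler's identity.

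For (2) the paper argues the same way but gives only one line, "strong convexity implies convexity". Your argument fills in what that line skips. The hypothesis concerns $H^{2}/2$ and holds only away from the origin, so some work is needed to obtain global convexity of $H$, or of $\{H\le 1\}$. Your perturbation of segments through $0$ together with the gauge argument supplies exactly that.

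The real difference is in (3). The paper does not use compactness. It bounds difference quotients by subadditivity and homogeneity, $\bigl(H(x+he_{i})-H(x)\bigr)/h\le H(e_{i})$, which gives the explicit constant $c=\bigl(\sum_{i}H^{2}(e_{i})\bigr)^{1/2}$. This route depends on (2). Your main route uses $0$-homogeneity of $\nabla H$ and continuity on the sphere. It needs only $C^{1}$ regularity off the origin and no convexity, but the constant is not explicit. Your alternative Lipschitz bound is essentially the paper's idea and gives the slightly sharper constant $\theta_{2}$.

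One small circularity needs fixing. In step 2 of your proof of (2) you bound $\nabla G$ by $\theta_{2}H$. But $|\nabla H|\le\theta_{2}$ comes from the Lipschitz estimate, which already assumes (2). Use the compactness constant $c$ from your first proof of (3) instead. Alternatively, drop the $C^{1}$ claim, since only continuity of $G$ is used in the approximation step.
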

\begin{proof}
    From the homogeneity,  $H\left(z\right)=|z|H\left(\frac{z}{|z|}\right)$, the first one follows from the definitions of $\theta_{1}$ and $\theta_{2}$. Strict inequality $0<\theta_{1}$ follows from the continuity, strict positivity, and homogeneity of $H$. In particular, when $\theta_{1}=\theta_{2}$, $H=\theta_{1}H_{2}$.
The second one holds since strong convexity implies convexity.
    Further, we have by $(2)$ and the homogeneity 
    \begin{equation*}
    \frac{H\left(x+he_{i}\right)-H\left(x\right)}{h}\leq H\left(e_{i}\right). 
    \end{equation*}
    Therefore, the third one holds thus with $c=\left(\displaystyle\sum_{i=1}^{n}H^{2}(e_{i})\right)^{\frac{1}{2}}$.
     Property $(4)$ can be obtained by differentiating $H\left(tx\right)=t H\left(x\right), t>0$, with respect to $t$ and substituting $t=1$.
\end{proof}

\begin{lemma}\label{H0} $H$ and $H_{0}$ interact in the following way
    \begin{enumerate}
     \item $H$-H\"older inequality: $\left<\xi, x\right>\leq H_{0}\left(\xi\right)H\left(x\right)$.
     \item $H_{0}$ is a Minkowski norm.
        \item Dual of $H_{0}$ is $H$.
        \item $H_{0}\left(\D H(x)\right)=1$ and $H\left(\D H_{0}(x)\right)=1$ for any $x$.
        \item $H_{0}(x)\nabla H(\nabla H_{0}(x))=x$ for any $x$.
    \end{enumerate}
\end{lemma}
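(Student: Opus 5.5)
I would prove the five items in the order listed, since each uses the previous ones, relying only on the definition \eqref{dual dfn}, the properties of $H$ in Lemma \ref{lemma_properties of H}, and the strong convexity of $H$.

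\emph{Item (1)} is immediate from \eqref{dual dfn}. For $x\neq0$ we have $\langle\xi,x\rangle/H(x)\le H_{0}(\xi)$, so $\langle \xi,x\rangle\le H_{0}(\xi)H(x)$; the case $x=0$ is trivial.

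\emph{Items (2) and (3)} come next. That $H_{0}$ is positive, one-homogeneous and subadditive follows from the supremum definition together with Lemma \ref{lemma_properties of H}(1): indeed $|\xi|/\theta_{2}\le H_{0}(\xi)\le |\xi|/\theta_{1}$. For (3), $H$ is a norm, so its unit ball $K=\{H\le1\}$ is a compact convex symmetric body. $H_{0}$ is the support function of $K$, and the bipolar theorem (Hahn--Banach separation) gives that the dual of $H_{0}$ is the gauge of $K$, namely $H$. The genuinely nontrivial part is $H_{0}\in C^{2}(\R^{n}\setminus\{0\})$ together with strong convexity of $H_{0}^{2}/2$. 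For this I would use the Legendre transform. Set $\Lambda(x)=H^{2}(x)/2$. Then $\Lambda^{*}(\xi)=\sup_{x}(\langle\xi,x\rangle-H(x)^{2}/2)=H_{0}(\xi)^{2}/2$, by optimising over $x=t y$ with $H(y)=1$. Since $\Lambda$ is $C^{2}$ off the origin with positive definite Hessian and is $2$-homogeneous, $\nabla\Lambda$ is a $C^{1}$ diffeomorphism of $\R^{n}\setminus\{0\}$. Its inverse is $\nabla\Lambda^{*}$, and $\nabla^{2}\Lambda^{*}(\xi)=(\nabla^{2}\Lambda(x))^{-1}$ at $\xi=\nabla\Lambda(x)$, which is positive definite and continuous. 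This Legendre-duality step is the main obstacle: one must check that $\nabla\Lambda$ is a bijection on $\R^{n}\setminus\{0\}$. Injectivity and the local diffeomorphism property come from strict convexity and the inverse function theorem. Surjectivity holds because $\nabla\Lambda$ is a homeomorphism onto its image, $2$-homogeneous up to degree one, and proper; equivalently, for each $\xi$ the supremum defining $\Lambda^{*}$ is attained at a point where $\nabla\Lambda(x)=\xi$.

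\emph{Item (4).} By Lemma \ref{lemma_properties of H}(4) and (1), $\langle\nabla H(x),x\rangle=H(x)$ gives $H_{0}(\nabla H(x))\ge1$. Conversely, convexity and homogeneity give $\langle \nabla H(x),y\rangle\le H(y)$ for all $y$, since $H(x+ty)\le H(x)+tH(y)$; differentiating at $t=0$ yields $H_{0}(\nabla H(x))\le1$. The statement for $\nabla H_{0}$ follows by symmetry, using (2) and (3).

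\emph{Item (5).} From the Legendre step, $\nabla\Lambda^{*}=(\nabla\Lambda)^{-1}$, that is $\nabla\Lambda(\nabla\Lambda^{*}(\xi))=\xi$. Here $\nabla\Lambda^{*}(\xi)=H_{0}(\xi)\nabla H_{0}(\xi)$ and $\nabla\Lambda(y)=H(y)\nabla H(y)$. Using $H(\nabla H_{0}(\xi))=1$ from (4) and the $0$-homogeneity of $\nabla H$, we obtain
\begin{equation*}
\xi=H\bigl(H_{0}(\xi)\nabla H_{0}(\xi)\bigr)\,\nabla H\bigl(\nabla H_{0}(\xi)\bigr)=H_{0}(\xi)\nabla H\bigl(\nabla H_{0}(\xi)\bigr),
\end{equation*}
which is (5). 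The case $\xi=0$ is trivial with the usual convention.
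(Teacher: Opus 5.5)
Your proof is correct, and on item (1) it is the same as the paper's. For items (2)--(5) the paper gives no argument and simply cites the literature, so your proposal is a genuinely different, self-contained route. Your ingredients are:
\begin{itemize}
\item the bipolar theorem, with $H_0$ the support function of $\{H\le 1\}$, for (3);
\item the supporting-hyperplane inequality $\langle\nabla H(x),y\rangle\le H(y)$, which is exactly the paper's Lemma \ref{euler inequality}, for (4);
\item the Legendre transform of $\Lambda=H^2/2$ for the $C^2$ regularity and strong convexity of $H_0$, and for (5).
\end{itemize}
What your route buys is transparency about hypotheses. Item (1), item (3), the non-smooth part of (2) and the first half of (4) use only convexity and homogeneity of $H$. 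Strong convexity enters only through $\nabla^2\Lambda>0$ on $\R^n\setminus\{0\}$. That makes $\nabla\Lambda$ a $C^1$ diffeomorphism of $\R^n\setminus\{0\}$, which in turn yields $H_0\in C^2(\R^n\setminus\{0\})$, the strong convexity of $H_0^2/2$, the second half of (4), and the inversion formula behind (5). The citation route is shorter but hides this structure.

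Two spots deserve tightening.

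\textbf{Surjectivity of $\nabla\Lambda$.} The phrase ``$2$-homogeneous up to degree one'' is garbled; $\nabla\Lambda$ is positively $1$-homogeneous. Drop the properness remark and use your attainment argument, justified explicitly:
\begin{itemize}
\item By item (1), $\langle\xi,x\rangle-\Lambda(x)\le H_0(\xi)H(x)-\tfrac12 H(x)^2\to-\infty$, so the supremum defining $\Lambda^*(\xi)$ is attained.
\item For $\xi\neq0$ the maximiser is nonzero, since the value at $0$ is $0<\tfrac12H_0(\xi)^2$.
\item Hence $\Lambda$ is differentiable at the maximiser $x$, and $\nabla\Lambda(x)=\xi$.
\end{itemize}

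\textbf{The identity $\nabla\Lambda^*=(\nabla\Lambda)^{-1}$.} Derive it rather than assert it. With $x(\xi)=(\nabla\Lambda)^{-1}(\xi)$ of class $C^1$ by the inverse function theorem, differentiate $\Lambda^*(\xi)=\langle\xi,x(\xi)\rangle-\Lambda(x(\xi))$ to get $\nabla\Lambda^*(\xi)=x(\xi)$. The formula $\nabla^2\Lambda^*(\xi)=(\nabla^2\Lambda(x))^{-1}$ then follows from the inverse function theorem.
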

\begin{proof}
    If $x=0$, then $H$-H\"older holds trivially. Otherwise, it holds by 
    \begin{equation*}
        \left<\xi, x\right>=\frac{\left<\xi , x\right>H\left(x\right)}{H\left(x\right)},
    \end{equation*}
    and the definition of $H_{0}$.
    For the proof of the remaining properties, we refer to \cite[Proposition 1.3]{DissertationXia}, \cite[Section 2.1]{BePo1996}, and the references therein.
\end{proof}

Strict monotonicity is one of the key aspects in the proof of the existence of a solution to \eqref{finsler large eqn}. We dedicate the rest of the results to proving that the operator we have considered is strictly monotonic.
\begin{lemma}\label{euler inequality}
    Let $x,y\in \rn\setminus\{0\}$. Then we have 
    \begin{equation}\label{convixity and tangent plane}
        H(x)\geq \left<\D H(y), x\right>
    \end{equation}
\end{lemma}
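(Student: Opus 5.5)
The inequality $H(x)\ge\langle\nabla H(y),x\rangle$ is the supporting-hyperplane (tangent plane) inequality for the convex, $1$-homogeneous function $H$ at the point $y$. I would derive it directly from convexity together with Euler's identity, Lemma \ref{lemma_properties of H}(4).

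Since $H$ is convex (Lemma \ref{lemma_properties of H}(2) and homogeneity) and differentiable at $y\neq 0$ (because $H\in C^2(\R^n\setminus\{0\})$), the first-order characterization of convexity gives
\begin{equation*}
H(x)\ \ge\ H(y)+\langle \nabla H(y), x-y\rangle \qquad\text{for all } x\in\R^n .
\end{equation*}
To justify this without quoting it, take $t\in(0,1)$. Convexity gives $H(y+t(x-y))\le (1-t)H(y)+tH(x)$. Rearranging,
\begin{equation*}
\frac{H(y+t(x-y))-H(y)}{t}\le H(x)-H(y),
\end{equation*}
and letting $t\to0^+$ turns the left-hand side into the directional derivative $\langle\nabla H(y),x-y\rangle$.

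Next, Lemma \ref{lemma_properties of H}(4) gives $\langle \nabla H(y),y\rangle=H(y)$. The right-hand side above therefore equals $H(y)+\langle\nabla H(y),x\rangle-H(y)=\langle\nabla H(y),x\rangle$, which is exactly \eqref{convixity and tangent plane}.

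An alternative route is Lemma \ref{H0}: by the $H$-Hölder inequality (1) and (4),
\begin{equation*}
\langle\nabla H(y),x\rangle\le H_0(\nabla H(y))\,H(x)=H(x).
\end{equation*}
I would present the convexity argument as the main proof, since it is self-contained, and mention this one as a remark. The only point needing care is differentiability at $y$, which is why the statement requires $y\neq0$; the restriction $x\neq0$ is not actually needed, because the case $x=0$ is trivial. There is no real obstacle here.
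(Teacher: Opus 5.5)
Your proof is correct and is essentially the paper's argument. Convexity gives the difference-quotient bound $H(x)-H(y)\ge \frac{H(y+t(x-y))-H(y)}{t}$; letting $t\to0^+$ yields $H(x)-H(y)\ge\langle\nabla H(y),x-y\rangle$; and Euler's identity $\langle\nabla H(y),y\rangle=H(y)$ finishes it. Your alternative via the $H$-H\"older inequality and $H_0(\nabla H(y))=1$ from Lemma \ref{H0} is also valid. However, it relies on a fact the paper only cites from the literature, so the self-contained convexity argument is the right choice for the main proof.
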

\begin{proof}
     Strong convexity of $H$ implies convexity. Thus, for any $t\in (0,1)$,
    \begin{equation*}
        H(x)-H(y)\geq \frac{H(y+t(x-y))-H(y)}{t}.
    \end{equation*}
       As directional derivatives exist, by taking $t\to 0$, we obtain  
     \begin{equation}\label{eqn hyperplane}
         H(x)-H(y)\geq \left<\D H(y), x-y\right>.
     \end{equation}
     Using the last implication of Lemma \ref{lemma_properties of H} on $H(y)$, we obtain  \eqref{convixity and tangent plane}.
\end{proof}
\begin{lemma}\label{lemma monotone operator}
    For any $x,y \in \rn$
    \begin{equation*}
        \left< \fin{x}-\fin{y}, x-y\right>\geq 0.
    \end{equation*}
\end{lemma}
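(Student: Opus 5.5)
The plan is to expand the inner product using Euler's identity (Lemma \ref{lemma_properties of H}(4)) and then control the two cross terms with Lemma \ref{euler inequality}. This reduces the claim to a one-variable monotonicity statement for $t\mapsto t^{p-1}$ on $[0,\infty)$.

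We first dispose of the degenerate cases. If $x=0$ or $y=0$, the expression equals $H^{p-1}(y)\left<\D H(y),y\right>=H^{p}(y)\geq 0$, or the symmetric expression in $x$. Here we interpret $H^{p-1}(0)\D H(0)=0$, which is legitimate because $p\ge 2$ and $\D H$ is bounded by Lemma \ref{lemma_properties of H}(3).

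Now take $x,y\neq 0$. Expanding, and using $\left<\D H(x),x\right>=H(x)$ and $\left<\D H(y),y\right>=H(y)$, we get
\begin{equation*}
\left<\fin{x}-\fin{y},x-y\right> = H^{p}(x)+H^{p}(y)-H^{p-1}(x)\left<\D H(x),y\right>-H^{p-1}(y)\left<\D H(y),x\right>.
\end{equation*}
Lemma \ref{euler inequality} gives $\left<\D H(x),y\right>\leq H(y)$ and $\left<\D H(y),x\right>\le H(x)$. Since the coefficients $H^{p-1}(x)$ and $H^{p-1}(y)$ are nonnegative, the right-hand side is bounded below by
\begin{equation*}
H^{p}(x)+H^{p}(y)-H^{p-1}(x)H(y)-H^{p-1}(y)H(x)=\bigl(H^{p-1}(x)-H^{p-1}(y)\bigr)\bigl(H(x)-H(y)\bigr).
\end{equation*}
This product is nonnegative because $t\mapsto t^{p-1}$ is nondecreasing on $[0,\infty)$, so both factors always have the same sign.

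There is no genuine obstacle. The only points needing care are the sign of the coefficients when applying Lemma \ref{euler inequality}, and the convention at the origin, where $\D H$ is undefined. For the strict monotonicity used later, one would additionally track equality: it forces $H(x)=H(y)$ together with equality in Lemma \ref{euler inequality}, and strong convexity then gives $x=y$. That refinement is not needed for the present statement.
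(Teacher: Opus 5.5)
Your proof is correct and is essentially the paper's own argument. You handle $x=0$ or $y=0$ directly, expand with Euler's identity, bound the two cross terms by Lemma \ref{euler inequality}, and recognise the result as $\bigl(H(x)-H(y)\bigr)\bigl(H^{p-1}(x)-H^{p-1}(y)\bigr)\geq 0$, with your explicit extension of $H^{p-1}(\xi)\nabla H(\xi)$ by $0$ at the origin making precise a convention the paper uses only implicitly.
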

\begin{proof}
    If $y=0$, then by Lemma \ref{lemma_properties of H} the inequality holds in view of the positivity of $H$. The case $x=0$ is handled similarly. Therefore, we may assume that $x\neq 0$ and $y\neq 0$. By $(4)$ in Lemma \ref{lemma_properties of H} and Lemma \ref{euler inequality} 
\begin{align*}
    &\left<\fin{x}-\fin{y}, x-y\right>\\
    &= H^{p}(x)-\left<\fin{x},  y\right>
    +H^{p}(y)-\left<\fin{y}, x\right>\\
    &\geq H^{p}(x)-H^{p-1}(x) H(y)
    +H^{p}(y)-H^{p-1}(y) H(x)\\
    &\geq \left(H(x)-H(y)\right)\left(H^{p-1}(x)-H^{p-1}(y)\right)\geq 0. \qedhere
\end{align*}
 \end{proof}
 Notice that Lemma \ref{lemma monotone operator} holds if $H$ is only assumed to be convex. Strict convexity implies strict monotonicity, as implied in the next three results. We first consider an improvement of Lemma \ref{euler inequality}, which can be found in \cite[Proposition 4.6]{Ohta2009}.
\begin{lemma}
    There exists $C\geq 1$ such that for all $x,y\in\rn $ 
    \begin{equation*}
        \frac{H^{2}(x)+H^{2}(y)}{2}\geq H^{2}\left(\frac{x+y}{2}\right)+\frac{1}{4C^{2}}H^{2}(x-y).
    \end{equation*}
\end{lemma}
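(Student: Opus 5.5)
The plan is to deduce the inequality from uniform strong convexity of $G:=H^{2}/2$, which follows from the strong convexity hypothesis, homogeneity and compactness of the unit sphere. Since $G$ is positively homogeneous of degree $2$ and $C^{2}$ on $\rn\setminus\{0\}$, its Hessian $\nabla^{2}G$ is homogeneous of degree $0$. The function $(z,\xi)\mapsto\left<\nabla^{2}G(z)\xi,\xi\right>$ is continuous and positive on the compact set $\{|z|=1\}\times\{|\xi|=1\}$, by the strong convexity assumption. So it has a positive minimum $\lambda>0$, and by homogeneity
\begin{equation*}
\left<\nabla^{2}G(z)\xi,\xi\right>\geq \lambda|\xi|^{2}\quad\text{for all } z\neq 0,\ \xi\in\rn .
\end{equation*}

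Next I will show that $G$ is $\lambda$-strongly convex on all of $\rn$, that is, $g(t):=G(y+t(x-y))$ satisfies $g''\geq \lambda|x-y|^{2}$ in a suitable weak sense. The main obstacle is the origin, where $G$ is only $C^{1}$. It is $C^{1}$ there because $\nabla G=H\nabla H$ is bounded by $c\,\theta_{2}|z|$ by Lemma \ref{lemma_properties of H}, hence continuous at $0$ with value $0$. I handle this in two steps.
\begin{itemize}
\item If the segment $[x,y]$ avoids $0$, then $g$ is $C^{2}$ with $g''\geq\lambda|x-y|^{2}$. Hence $g(t)-\tfrac{\lambda}{2}|x-y|^{2}t^{2}$ is convex, and
\begin{equation*}
\frac{G(x)+G(y)}{2}-G\Big(\frac{x+y}{2}\Big)\geq \frac{\lambda}{8}|x-y|^{2}.
\end{equation*}
\item If the segment passes through $0$, I replace $x,y$ by $x+\varepsilon w,\ y+\varepsilon w$ for a fixed vector $w$ not parallel to $x-y$. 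The shifted segment then avoids $0$, and $x-y$ is unchanged. Letting $\varepsilon\to0$ and using continuity of $G$ gives the same inequality.
\end{itemize}
The case $x-y=0$ is trivial.

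Finally, $H^{2}=2G$, so
\begin{equation*}
\frac{H^{2}(x)+H^{2}(y)}{2}-H^{2}\Big(\frac{x+y}{2}\Big)\geq \frac{\lambda}{4}|x-y|^{2}\geq \frac{\lambda}{4\theta_{2}^{2}}H^{2}(x-y),
\end{equation*}
where the last step uses $H(z)\leq\theta_{2}|z|$ from Lemma \ref{lemma_properties of H}. Setting $C:=\max\{1,\theta_{2}/\sqrt{\lambda}\}$ gives $\frac{\lambda}{4\theta_{2}^{2}}\geq\frac{1}{4C^{2}}$, which proves the claim with $C\geq1$.
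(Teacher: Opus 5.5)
Your proof is correct. There is nothing in the paper to compare it against step by step: the paper gives no argument for this lemma and simply cites \cite[Proposition 4.6]{Ohta2009}. That result ties $2$-uniform convexity of a Minkowski norm to a uniform bound $\langle \nabla^{2}(H^{2}/2)(v)\,w,w\rangle\geq C^{-2}H^{2}(w)$, integrated along segments, which is the same mechanism you use.

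The difference from the cited route is in the constant. You bound the Hessian in the Euclidean norm and then convert with $\theta_{2}$, which gives the explicit constant $C=\theta_{2}/\sqrt{\lambda}$. The maximum with $1$ is automatic: Euler's identity gives $\langle\nabla^{2}G(z)z,z\rangle=H^{2}(z)$, hence $\lambda\leq\theta_{1}^{2}\leq\theta_{2}^{2}$. The intrinsic constant $\sup_{v,w\neq 0}H(w)/\langle\nabla^{2}G(v)w,w\rangle^{1/2}$ is never larger than yours. Since the paper only ever needs some constant, your self-contained version is fully adequate.

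One small repair is needed. The perturbation step requires a vector $w$ not parallel to $x-y$, and no such vector exists when $n=1$. In that case the statement is just the parallelogram identity for $H(t)=\gamma|t|$, with $C=1$.

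Alternatively, you can drop the perturbation entirely and use the $C^{1}$ regularity of $G$ that you already established. If $x\neq y$, the segment meets $0$ at most at one time $t_{0}$. The function $g'(t)-\lambda|x-y|^{2}t$ is continuous on $[0,1]$ and nondecreasing on $[0,t_{0}]$ and on $[t_{0},1]$. It is therefore nondecreasing on all of $[0,1]$, which gives the convexity of $g(t)-\tfrac{\lambda}{2}|x-y|^{2}t^{2}$ in every dimension.
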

The following two lemmas are from \cite{BalGarainMukarjee2021}.
\begin{lemma}
    Let $p\geq 2$, there exists a constant $c>0$ such that for all $x,y\in\rn$
    \begin{equation}\label{eqn improved hyperplane}
        H^{p}(x)\geq H^{p}(y)+\left<\nabla H^{p}(y)\cdot x-y\right>+cH^{p}(x-y).
    \end{equation}
\end{lemma}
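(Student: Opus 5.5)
The plan is to derive \eqref{eqn improved hyperplane} from Ohta's uniform convexity lemma for $H^{2}$ just stated, by the same route used for the Euclidean inequality $|x|^{p}\geq |y|^{p}+p|y|^{p-2}\langle y,x-y\rangle+c|x-y|^{p}$ for $p\geq 2$. Write $G=H^{2}/2$, which is $C^{1}$ on $\rn$, and $H^{p}=(2G)^{p/2}$. The case $y=0$ is immediate: the inequality reads $H^{p}(x)\geq cH^{p}(x)$, which holds for any $c\leq 1$, with $\nabla H^{p}(0)=0$ since $p>1$. So assume $y\neq 0$.

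\textbf{Step 1 (first-order uniform convexity for $H^{2}$).} Iterate the midpoint inequality. Apply it to the pair $y$ and $(x+y)/2$, then to $y$ and $y+(x-y)/4$, and so on. Using homogeneity, $H^{2}(2^{-k}(x-y))=4^{-k}H^{2}(x-y)$, and the quadratic error terms telescope. Letting $k\to\infty$ and using differentiability of $H^{2}$ at $y$ gives
\begin{equation*}
H^{2}(x)\geq H^{2}(y)+\langle\nabla H^{2}(y),x-y\rangle+c_{0}H^{2}(x-y),
\end{equation*}
with $c_{0}=c_{0}(C)>0$. This is the standard passage from midpoint strong convexity to the gradient form, using $t\mapsto H^{2}(y+t(x-y))$, which is convex with a quadratic modulus.

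\textbf{Step 2 (lift to power $p$).} Restrict to the segment and set $\phi(t)=H^{p}(y+t(x-y))$, so $\phi(t)=g(t)^{p/2}$ with $g(t)=H^{2}(y+t(x-y))$. By Step 1, $g$ satisfies $g''\geq 2c_{0}H^{2}(x-y)$ in the distributional sense. Computing $\phi''$, the term $\tfrac{p}{2}(\tfrac{p}{2}-1)g^{p/2-2}(g')^{2}$ is nonnegative because $p\geq 2$, so
\begin{equation*}
\phi''\geq \tfrac{p}{2}\,g^{p/2-1}g''\geq p\,c_{0}\,H^{2}(x-y)\,H^{p-2}(y+t(x-y)).
\end{equation*}
Since $\phi(1)-\phi(0)-\phi'(0)=\int_{0}^{1}(1-t)\phi''(t)\,dt$, it remains to bound $\int_{0}^{1}(1-t)H^{p-2}(y+t(x-y))\,dt$ from below by a constant times $H^{p-2}(x-y)$.

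\textbf{Step 3 (the main obstacle).} This lower bound is the delicate point, since the segment may pass through or near the origin. I would handle it with the triangle inequality from Lemma \ref{lemma_properties of H}: the function $t\mapsto H(y+t(x-y))$ is convex and Lipschitz with slope bounded below, in the sense that on at least half of $[0,1]$ (or on a subinterval of length $1/4$ inside $[0,3/4]$) one has $H(y+t(x-y))\geq \tfrac14 H(x-y)$. The reason is that $H(y+t(x-y))\geq |t-t_{*}|\,H(\pm(x-y))-\dots$ relative to the minimizing point $t_{*}$. Here the lack of symmetry needs care, since $H(-z)=H(z)$ by the stated homogeneity $H(tx)=|t|H(x)$, which resolves it.

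Combining the three steps gives \eqref{eqn improved hyperplane} with $c=c(p,C)>0$. The boundary issues at points where $H=0$ or where $g$ is only $C^{1}$ can be handled by approximating or arguing on subintervals, and I would treat this as routine. Alternatively, one can simply cite \cite{BalGarainMukarjee2021}, where the statement appears.
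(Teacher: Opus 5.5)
Your proof is correct, but it takes a genuinely different and longer route than the paper. The paper never converts to first-order form for $H^{2}$ and never works along the segment. Instead, it lifts Ohta's midpoint inequality directly to power $p$, using $a^{p}+b^{p}\le (a^{2}+b^{2})^{p/2}$ and the convexity of $s\mapsto s^{p/2}$:
\[
H^{p}\Big(\tfrac{x+y}{2}\Big)+C^{-p}H^{p}\Big(\tfrac{x-y}{2}\Big)\le \tfrac{1}{2}\big(H^{p}(x)+H^{p}(y)\big).
\]
It then bounds the midpoint term from below by the ordinary tangent inequality $H^{p}(\tfrac{x+y}{2})\ge H^{p}(y)+\tfrac12\langle\nabla H^{p}(y),x-y\rangle$ for the convex function $H^{p}$. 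Multiplying by $2$ gives the claim with $c=2^{1-p}C^{-p}$; the paper writes $1/(2C^{2})$, which is only the $p=2$ value.

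So in the paper the passage from $H^{2}$ to $H^{p}$ happens at the level of midpoint inequalities, via superadditivity of $s\mapsto s^{p/2}$, and the passage from midpoint to gradient form costs one application of plain convexity. What you call the main obstacle, the lower bound on $\int_0^1(1-t)H^{p-2}(y+t(x-y))\,dt$, never arises, and neither do the regularity issues where the segment meets the origin. The same midpoint-plus-tangent trick would also replace your iteration in Step 1, at the cost of a factor $\tfrac12$ in $c_0$.

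What your route buys is the standard Euclidean argument and a more informative pointwise bound, $\phi''(t)\ge p\,c_0\,H^{2}(x-y)\,H^{p-2}(y+t(x-y))$, along segments.

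Your Step 3 does close as claimed, but the justification you give is loose. Set $\psi(t)=H(y+t(x-y))$. Evenness of $H$ and the triangle inequality give $\psi(s)+\psi(t)\ge |s-t|\,H(x-y)$. Hence the set $\{t:\psi(t)<\tfrac14H(x-y)\}$ has diameter at most $\tfrac12$. It follows that $\psi\ge\tfrac14H(x-y)$ on a subset of $[0,\tfrac34]$ of measure at least $\tfrac14$, where $1-t\ge\tfrac14$. Drop the phrase ``Lipschitz with slope bounded below''; that is not what is being used.
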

\begin{proof}
    Since $p\geq 2$, we use the inequality $a^{p}+b^{p}\leq (a^{2}+b^{2})^{p/2}$ for $a,b\geq 0$ , homogeneity of $H$, previous lemma and $(a+b)^{p}\leq 2^{p-1}(a^{p}+b^{p})$ to obtain
    \begin{equation*}
    \begin{split}
       H^{p}\left(\frac{x+y}{2}\right)+\frac{1}{C^{p}}H^{p}\left(\frac{x-y}{2}\right)&\leq \left\{H^{2}\left(\frac{x+y}{2}\right)+\frac{1}{C^{2}}H^{2}\left(\frac{x-y}{2}\right)\right\}^{\frac{p}{2}} \\
       &= \left\{H^{2}\left(\frac{x+y}{2}\right)+\frac{1}{4C^{2}}H^{2}\left(x-y\right)\right\}^{\frac{p}{2}}\\
       &\leq \frac{1}{2^{\frac{p}{2}}}\left\{H^{2}(x)+H^{2}(y)\right\}^{\frac{p}{2}}\\
       &\leq \frac{H^{p}(x)+H^{p}(y)}{2}.
    \end{split}
    \end{equation*}
    Since $H^{p}$ convex, same logic as in Lemma \ref{euler inequality} and replacing $x$ by $(x+y)/2$ in \eqref{eqn hyperplane} with $H^{p}$ instead of $H$, and using it in the above inequality we obtain 
    \begin{equation*}
        H^{p}(y)+\frac{1}{2}\left<\nabla H^{p}(y), x-y\right>+\frac{1}{C^{p}}H^{p}\left(\frac{x-y}{2}\right)\leq \frac{H^{p}(x)+H^{p}(y)}{2}.
    \end{equation*}
    The lemma follows from the homogeneity, with $c=1/2C^{2}$.
\end{proof}
\begin{corollary}\label{strict convexity of H}
    Let $p\geq 2$. There exists a positive constant $c=c(\theta_{1},\theta_{2},p)$ such that for any $x,y\in \rn$
    \begin{equation*}
          \left<\fin{x}-\fin{y}, x-y\right>\geq c H^{p}(x-y).
    \end{equation*}
    \end{corollary}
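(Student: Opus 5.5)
The plan is to derive the corollary directly from inequality \eqref{eqn improved hyperplane}, applied twice with the roles of $x$ and $y$ exchanged, and then add the two inequalities. First note that $\nabla H^{p}(y)=pH^{p-1}(y)\nabla H(y)=p\,\fin{y}$ for $y\neq0$. At $y=0$ we interpret the gradient as $0$, which is legitimate because $H^{p}$ is $C^{1}$ for $p\geq 2$: its gradient $pH^{p-1}\nabla H$ is bounded near $0$ by Lemma \ref{lemma_properties of H} and tends to $0$. So \eqref{eqn improved hyperplane} reads
\begin{equation*}
H^{p}(x)\geq H^{p}(y)+p\left<\fin{y},x-y\right>+c\,H^{p}(x-y).
\end{equation*}

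Next, swap $x$ and $y$ to get
\begin{equation*}
H^{p}(y)\geq H^{p}(x)+p\left<\fin{x},y-x\right>+c\,H^{p}(y-x).
\end{equation*}
Adding the two inequalities cancels $H^{p}(x)+H^{p}(y)$ on both sides. By the homogeneity $H(-z)=|{-1}|H(z)$ we have $H^{p}(y-x)=H^{p}(x-y)$. This leaves
\begin{equation*}
p\left<\fin{x}-\fin{y},x-y\right>\geq 2c\,H^{p}(x-y).
\end{equation*}
Dividing by $p$ gives the claim with constant $2c/p$.

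The only point needing care, and the main obstacle, is the dependence of the constant. The $c$ in \eqref{eqn improved hyperplane} comes from the uniform convexity constant $C$ of Ohta's lemma, which depends on $H$ through the strong convexity of $H^{2}$ on the compact unit sphere. I would state the dependence as $c=c(H,p)$, phrased via $\theta_{1},\theta_{2}$ as in the statement. If an explicit bound in $\theta_{1},\theta_{2}$ is required, I would use Lemma \ref{lemma_properties of H}(1) to convert between $H$ and Euclidean norms, accepting that the strong-convexity modulus still enters implicitly.

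The degenerate cases $x=0$ or $y=0$ are covered by the $C^{1}$ remark above. They can also be checked directly: for $y=0$ the left side equals $H^{p}(x)$, which dominates $cH^{p}(x)$ whenever $c\leq1$, and we may always shrink $c$ to ensure this.
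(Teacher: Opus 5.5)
Your proof is correct and is exactly the paper's argument: apply \eqref{eqn improved hyperplane} with $x$ and $y$ interchanged, add the two inequalities, and use $\nabla H^{p}=pH^{p-1}\nabla H$ (with value $0$ at the origin), which gives the constant $2c/p$. Your caution about the constant is also justified: it depends on $H$ through the uniform convexity constant in Ohta's inequality, not on $\theta_{1},\theta_{2},p$ alone, since Minkowski norms converging to a norm with a flat face keep $\theta_{1},\theta_{2}$ bounded away from $0$ and $\infty$ while forcing the constant to $0$.
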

\begin{proof}
Interchanging $x$ and $y$ in \eqref{eqn improved hyperplane} and adding the resulting inequality to \eqref{eqn improved hyperplane}, the corollary follows.
\end{proof}
\section{Analysis in one dimension}\label{1d}
In this section, we restrict ourselves to one dimension and show that the Keller–Osserman condition is necessary and sufficient for the existence of large solutions on an interval, consequently proving Theorem \ref{main theorem existence} for $n=1$. Further, we explore the behaviour of a solution near the endpoints of the interval and subsequently prove Theorem \ref{dual asym} for $n=1$.

\subsection{Existence:}
Let $a,b\in \R$. We consider the problem 
\begin{equation}\label{eqn in one dim without boundary}
    \left(H^{p-1}(u')H'(u')\right)'=f(u)\quad \text{on } (a,b).
\end{equation}
In dimension one, any Minkowski norm, being positively 1-homogeneous, must be of the form $H(t)=\gamma |t|$ for some $\gamma>0$. The above equation reduces to  
\begin{equation}\label{eqn reduced in one dim}
    \gamma^{p}\left(|u'|^{p-2}u'\right)'=f(u)\quad \text{on }(a,b).
\end{equation}
If $u$ is a classical solution,
since $f\geq 0$ by \ref{a1}, the equation implies $(p-1)\gamma^{p}|u'|^{p-2}u''\geq 0$. That is,  $u$ must be convex. Therefore, we look for a convex solution.

    With this at hand, we prove Theorem \ref{main theorem existence} for $n=1$.
    \begin{theorem}\label{thrm existence in one dim with osgood}
        Assume \ref{KO}, \ref{a1}, and \ref{i1}. Then  
        \begin{equation}\label{eqn in one dim}
        \begin{cases}
      \left(H^{p-1}(v')H'(v')\right)'=f(v)\quad \text{on } (a,b). \\
      v(x)\to \infty \quad \text{as } x\to a \text{ or }b
        \end{cases}
        \end{equation}
        admits a solution.
    \end{theorem}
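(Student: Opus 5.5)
By the reduction above, it suffices to produce a convex classical solution of \eqref{eqn reduced in one dim}, $\gamma^{p}(|v'|^{p-2}v')'=f(v)$ on $(a,b)$, that blows up at both endpoints. I will look for one that is symmetric about the midpoint $m=(a+b)/2$, with $v'(m)=0$ and $v(m)=\alpha>0$, and then tune $\alpha$.

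\textbf{First integral.} On $(m,b)$, where $v'\ge 0$, multiply the equation by $v'$. Since $(|v'|^{p-2}v')'v'=\frac{p-1}{p}\big(|v'|^{p}\big)'$, this gives
\begin{equation*}
\gamma^{p}\tfrac{p-1}{p}(v')^{p}=F(v)-F(\alpha).
\end{equation*}
Hence $v'=\gamma^{-1}\big(\tfrac{p}{p-1}\big)^{1/p}\{F(v)-F(\alpha)\}^{1/p}$. Separating variables, the solution starting at $\alpha$ reaches $+\infty$ at the distance
\begin{equation*}
T(\alpha)=\gamma\Big(\frac{p-1}{p}\Big)^{1/p}\int_{\alpha}^{\infty}\frac{ds}{\{F(s)-F(\alpha)\}^{1/p}}.
\end{equation*}

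\textbf{Constructing $v$.} Define $v$ on $[m,m+T(\alpha))$ implicitly by $x-m=\gamma(\frac{p-1}{p})^{1/p}\int_\alpha^{v(x)}\{F(s)-F(\alpha)\}^{-1/p}ds$, and extend it evenly to $(m-T(\alpha),m]$. I will check directly that this $v$ is $C^1$ with $|v'|^{p-2}v'\in C^1$ and that it satisfies the equation, including at $m$ where $v'=0$. Strict monotonicity of $f$ makes $F(s)-F(\alpha)\ge f(\alpha)(s-\alpha)$, so the integrand is $O((s-\alpha)^{-1/p})$ near $\alpha$ and the integral converges at its lower end. It therefore remains to choose $\alpha$ with $T(\alpha)=(b-a)/2$.

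\textbf{Properties of $T$.} I expect this to be the main obstacle; I need $T$ finite, continuous, and with range $(0,\infty)$.

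\emph{Finiteness.} Near $\alpha$ the integrand is controlled as above. For large $s$, since $F$ is convex and $F(s)\to\infty$, we have $F(s)-F(\alpha)\ge \frac12F(s)$ for $s\ge s_0(\alpha)$, and \ref{KO} gives convergence at infinity.

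\emph{Continuity.} Substitute $s=\alpha+t$ and use dominated convergence, with the same two bounds holding locally uniformly in $\alpha$.

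\emph{Behaviour as $\alpha\to\infty$.} Substitute $s=\alpha\sigma$ and use $F(\alpha\sigma)-F(\alpha)\ge F(\alpha\sigma)-F(\alpha)\sigma^{0}$. I will try $F(s)-F(\alpha)\ge F(s)\,(1-\alpha/s)$, which follows from convexity and $F(0)=0$ via $F(\alpha)\le (\alpha/s)F(s)$. Then split the integral at $s=2\alpha$:
\begin{itemize}
\item on $[2\alpha,\infty)$ it is at most $2^{1/p}\Psi$-type tail $\to0$ by \ref{KO};
\item on $[\alpha,2\alpha]$ it is at most $\alpha\,(f(\alpha)\alpha)^{-1/p}\cdot C$, using $F(s)-F(\alpha)\ge f(\alpha)(s-\alpha)$.
\end{itemize}
The second bound needs $\alpha^{p-1}/f(\alpha)\to0$, which follows from \ref{KO}: convergence of $\int^\infty F^{-1/p}$ forces $F(s)/s^{p}\to\infty$, and $f(\alpha)\ge F(\alpha)/\alpha$. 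Hence $T(\alpha)\to0$.

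\emph{Behaviour as $\alpha\to0^+$.} Bound from below by $\int_{2\alpha}^{1}\{F(s)\}^{-1/p}ds$, which tends to $\infty$ by the Osgood condition \ref{i1}. This is exactly where \ref{i1} is used.

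\textbf{Conclusion.} By the intermediate value theorem there is $\alpha$ with $2T(\alpha)=b-a$. The resulting $v$ solves \eqref{eqn in one dim}, and it is convex since $|v'|^{p-2}v'$ is increasing.
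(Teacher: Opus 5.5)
Your proposal is correct and follows essentially the same route as the paper: the first integral, the blow-up distance $T(\alpha)$ (the paper's $\ell(v_{0})$), its continuity, $T\to\infty$ as $\alpha\to0^{+}$ via \ref{i1}, $T\to0$ as $\alpha\to\infty$, and then choosing $\alpha$ with $2T(\alpha)=b-a$ and placing the solution symmetrically about the midpoint. Two of your steps are tighter than the paper's: for $\alpha\to\infty$ you split at $2\alpha$ and use that \ref{KO} forces $F(s)/s^{p}\to\infty$, whereas the paper only gives a pointwise bound on the integrand without a dominating function, and you use only the intermediate value theorem, whereas the paper claims bijectivity of $\ell$. The stray inequality involving $\sigma^{0}$ is vacuous and should be deleted.
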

    \begin{proof}Let $c_{m}$ be the point at which $u$ attains its minimum. Denote $v_{0}=u(c_{m})$. We prove the existence in three steps; in the first step, we associate a function $v_{0} \mapsto \ell(v_{0})$ which determines an interval $\left(-\ell(v_{0}),\ell(v_{0})\right)$ where unique solution exists. In the next step we show that the map $\ell:(0,\infty)\to (0,\infty)$ is a bijection, which implies that given any $\ell>0$, we can construct the solution on $(-\ell,\ell)$. Finally, in the last step we use translations to get unique solution on any interval $(a,b)$.

    \textit{Step 1:} We consider the second-order initial value problem
\begin{equation}\label{Ode1}
 \begin{cases}
     \gamma^{p}\left(|v'|^{p-2}v'\right)'=f(v)\quad \text{for all } x\geq c_{m}\\
     v'(c_{m})=0 \quad \text{and} \quad v(c_{m})=v_{0}.
 \end{cases}
 \end{equation}
 Multiplying the above equation by $v'$, integrating from $c_{m}$ to $x$, and using the change of variables $s=v(t)$, we obtain 
 \begin{equation*}
     (p-1)\gamma^{p}\int_{0}^{v'(x)}t^{p-1}\ dt=F\left(v(t)\right)-F(v_{0})
 \end{equation*}
 where $F$ is defined in \eqref{eqn primitive of f}.
Rewriting the above equation
 \begin{equation}\label{eqn quotient}
     \left(\frac{p-1}{p}\right)^{1/p}\frac{\gamma v'(x)}{\left\{F\left(v(x)\right)-F(v_{0})\right\}^{1/p}}=1.
 \end{equation}
 Integrating from $c_{m}$ to $x$ and using a change of variables,
 \begin{equation*}
     \left(\frac{p-1}{p}\right)^{1/p}\gamma \int_{v_{0}}^{v(x)}\frac{ds}{\left\{F(s)-F(v_{0})\right\}^{1/p}}=x-c_{m}.
 \end{equation*}
 \begin{remark}
     If $v(x)\to +\infty$ as $x\to b$, then by the right hand side \ref{KO} holds. On the other hand if \ref{KO} holds, then $v$ has to blow up at a point in $\R$ since otherwise right hand side can tend to $+\infty$.
 \end{remark}
  \begin{remark}\label{remark 1}
     The appearance of anisotropy in the Keller-Osserman condition occurs only through the multiplicative constant $\gamma$; consequently, the integrability requirement is invariant under the choice of anisotropy $H$.
 \end{remark}
Uniqueness of $v$ follows by the positivity of the integrand. 

Let $(c_{m},\ell(v_{0}))$ be the maximal interval of existence. Integrating \eqref{eqn quotient} from $c_{m}$ to $\ell(v_{0})$, we obtain  
 \begin{equation}\label{eqn dfn of ell}
    \ell(v_{0})=c_{m}+ \gamma \left(\frac{p-1}{p}\right)^{1/p}\int_{v_{0}}^{\infty}\frac{ds}{\{F(s)-F(v_{0})\}^{1/p}}.
\end{equation}
The quantity $\ell(v_{0})<+\infty$ due to \ref{KO}.
Similarly, let $\Tilde{v}$ be the unique solution of
\begin{equation}\label{ODE2}
   \begin{cases}
     \gamma^{p}\left(|\Tilde{v}'|^{p-1}\Tilde{v}'\right)'=f(\Tilde{v})\quad \text{for all } x\leq c_{m}\\
     \Tilde{v}'(c_{m})=0 \quad \text{and} \quad \Tilde{v}(c_{m})=v_{0}.
 \end{cases} 
\end{equation}
It is given by 
\begin{equation}\label{eqn u2}
    \gamma \left(\frac{p-1}{p}\right)^{1/p}\int_{v_{0}}^{\Tilde{v}(x)}\frac{ds}{\{F(s)-F(v_{0})\}^{1/p}}=x-c_{m}\quad \text{on } (\Tilde{\ell}(v_{0}),c_{m}).
\end{equation}
Define the $C^{1}(\tilde{\ell}(v_{0}),\ell(v_{0}))$ function $w$, where $w(x):=v(x)$ for $x\geq c_{m}$ and $w:=\tilde{v}(x)$ for $x\leq c_{m}$. Then $w$ is the unique solution of \eqref{eqn in one dim without boundary} when $a=\Tilde{\ell}(v_{0})$ and $b =\ell(v_{0})$. By the observation from the beginning of this section, $w$ is convex. In particular, $v_{0}$ is the minimum of $w$.
By uniqueness of the solution $w(x)=w(2c_{m}-x)$, which implies $c_{m}=\frac{\ell(v_{0})+\Tilde{\ell}(v_{0})}{2}$. Thus, up to translations, it is enough to assume that $c_{m}=0$, and consider the interval $(-\ell(v_{0}),\ell(v_{0}))$.

  \textit{Step 2:} The mapping $\ell:(0,\infty)\to(0,\infty)$ is bijective.

Clearly, $\ell$ is injective as the integrand in \eqref{eqn dfn of ell} is positive. Applying change of variables to \eqref{eqn dfn of ell}, we obtain 
        \begin{equation*}
            \ell(t)=\gamma \left(\frac{p-1}{p}\right)^{1/p}\int_{0}^{\infty}\frac{ds}{\left\{F(s+t)-F(t)\right\}^{1/p}}.
        \end{equation*}
        First, $\ell$ is continuous. To show that $\ell$ is onto, it is sufficient to show that $\ell(t)\to \infty$ as $t\to 0$ and $\ell(t)\to 0$ as $t\to \infty$.

        Since $F(s+t)-F(t)\to F(s)$ as $t\to 0$, the Osgood condition \ref{i1} implies that $\ell(t)\to \infty$ as $t\to 0$. 
        
        For the second limit, as $f$ is increasing, we can write
        \begin{equation*}
           F(s+t)-F(t)=\int_{t}^{s+t}f(r)\ dr\geq sf(t), 
        \end{equation*}
    
       Thus 
       \begin{equation*}
           \left\{F(s+t)-F(t)\right\}^{1/p}\geq (sf(t))^{1/p}\to +\infty \text{ as } t\to +\infty.
       \end{equation*}
      Indeed, if $f\leq M$ for some $M\in (0,+\infty)$, we would have $F(t)^{-1/p}\geq (Mt)^{-1/p}$, and so \ref{KO} fails. Thus, $\ell(t)\to 0$ as $t\to \infty$.

  \textit{Step 3:}
        Let $c=\frac{a+b}{2}$ and $\delta=\frac{b-a}{2}$. By the previous step, there exists $t_{0}\in (0,\infty)$ such that $\ell(t_{0})=\delta$. The function $\phi$ defined implicitly by 
        \begin{equation*}
              \gamma\left(\frac{p-1}{p}\right)^{1/p}\int_{t_{0}}^{\phi(x)}\frac{ds}{\{F(s)-F(v_{0})\}^{1/p}}=x, \quad \text{for all } x\in (-\delta,\delta),
        \end{equation*}
         solves \eqref{eqn in one dim} for $a=-\delta$ and $b=\delta$. Thus, $v(x)=\phi(x-c)$ is the desired solution.
\end{proof}

 With the assumptions present in the previous theorem, a natural to ask if  $\ell(v_{0})=+\infty$, or equivalently, $v_{0}=0$. The answer is negative. In this regard, the next two results provide the non-existence of a large solution on $\R$.
\begin{lemma}
    Assume \ref{i1}. Let $u$ be the solution of \eqref{eqn in one dim}. Then $u_{0}:=\min_{x}u(x)> 0$.
\end{lemma}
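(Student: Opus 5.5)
We argue by contradiction, using the explicit one–dimensional structure from the proof of Theorem \ref{thrm existence in one dim with osgood}. Since $f\geq 0$, any solution $u$ of \eqref{eqn in one dim} is convex and tends to $+\infty$ at both endpoints. Hence it attains its minimum $u_{0}$ at some interior point $c_{m}$, where $u'(c_{m})=0$. Note first that $u_0\ge 0$: if $u_0<0$ the equation would be evaluated where $f$ is not defined. So it suffices to rule out $u_{0}=0$.

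Suppose $u_{0}=0$. On $[c_{m},b)$ we have $u'\ge 0$, so we may multiply \eqref{eqn reduced in one dim} by $u'$ and integrate from $c_{m}$ to $x$, exactly as in Step 1. Using $F(u_0)=F(0)=0$, this gives
\begin{equation*}
\left(\frac{p-1}{p}\right)^{1/p}\gamma\, u'(x)=\{F(u(x))\}^{1/p}.
\end{equation*}
There are two cases.

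\emph{Case 1: $u\equiv 0$ on a right neighbourhood of $c_m$.} Let $x_{1}=\sup\{x\ge c_m: u=0 \text{ on }[c_m,x]\}$. Because $u$ blows up at $b$, we have $x_1<b$, $u(x_1)=0$ and $u'(x_1)=0$. Moreover $u>0$ to the right of $x_{1}$. Replace $c_{m}$ by $x_{1}$; this reduces to Case 2.

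\emph{Case 2: $u>0$ on $(c_{m},b)$.} Then $u'>0$ there, so $u$ is a bijection onto $(0,\infty)$. Separating variables and integrating from $y$ to $x$, with $c_m<y<x<b$, gives
\begin{equation*}
\gamma\left(\frac{p-1}{p}\right)^{1/p}\int_{u(y)}^{u(x)}\frac{ds}{\{F(s)\}^{1/p}}=x-y .
\end{equation*}
Fix $x$ and let $y\to c_{m}^{+}$. Then $u(y)\to 0$, and by the Osgood condition \ref{i1} the left-hand side tends to $+\infty$. The right-hand side, however, stays bounded by $x-c_{m}<\infty$. This is a contradiction, so $u_{0}>0$.

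The main obstacle is justifying the first-integral identity and the separation of variables at the degenerate point where $u'=0$. For this we use that $|u'|^{p-2}u'$ is $C^1$ in the weak sense, since its derivative equals $f(u)/\gamma^p$, which is continuous. The identity $\frac{d}{dx}\big(\tfrac{p-1}{p}\gamma^p|u'|^p\big)=f(u)u'$ then holds as a product rule for absolutely continuous functions. Because the integrand $\{F(s)\}^{-1/p}$ is only singular at $s=0$, we integrate on $[y,x]$ with $y>c_m$, where $u(y)>0$, and pass to the limit $y\to c_m^+$ only at the end via monotone convergence.
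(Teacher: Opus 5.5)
Your proposal is correct and is essentially the paper's argument: assuming $u_0=0$, you pass to the last zero $x_1$ of $u$ to the right of the minimum (the paper's $t_0$), and use the first integral \eqref{eqn quotient} with $F(u_0)=0$ to obtain $\gamma\left(\frac{p-1}{p}\right)^{1/p}\int_{u(y)}^{u(x)}\{F(s)\}^{-1/p}\,ds=x-y$; letting $y\to x_1^{+}$, condition \ref{i1} forces the left-hand side to $+\infty$ while the right-hand side stays bounded. Your explicit treatment of the case where $u$ vanishes on an interval, and your justification of the first integral at the degenerate point $u'=0$, fill in details the paper leaves implicit but do not change the route.
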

\begin{proof}
    As $u\geq 0$, $u_{0}\geq 0$. If $u_{0}=0$, let $t_{0}:=\min\{t~|~w(t)>0\}$. Then, $t_{0}\geq c_{m}$. For any $t_{0}<x<y$, integrating \eqref{eqn quotient} between $x$ and $y$ implies
    \begin{equation*}
        \gamma \left(\frac{p-1}{p}\right)^{1/p}\int_{w(x)}^{w(y)}\frac{ds}{\left\{(F(s)\right\}^{1/p}}=y-x.
    \end{equation*}
    As $x\to t_{0}^{+}$, the monotone convergence theorem and \ref{i1} gives a contradiction. 
    \end{proof}
    \begin{corollary}
        Let $f$ satisfy \ref{KO} and \ref{i1}. There exist no function $u$ solving \eqref{eqn in one dim} with $a=-\infty$ and $b=\infty$. 
    \end{corollary}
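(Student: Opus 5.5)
Suppose, for contradiction, that $u$ solves \eqref{eqn in one dim} on $(-\infty,\infty)$, so $u$ is a positive $C^{1}$ solution of \eqref{eqn reduced in one dim} on all of $\R$; the boundary condition then reads $u(x)\to\infty$ as $x\to\pm\infty$. I will show that a solution with this behaviour at $\pm\infty$ must attain a positive minimum, and that the first integral then forces $u$ to blow up at a finite point. That finite blow-up is incompatible with $u$ being finite on all of $\R$.

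\textbf{Step 1: a positive minimum.} Because $u$ is continuous and tends to $+\infty$ at both ends, it attains a global minimum $v_{0}=u(c_{m})$ at some $c_{m}\in\R$. There $u'(c_{m})=0$. The previous lemma, which uses \ref{i1}, applies verbatim since its argument is purely local, and it gives $v_{0}>0$. Alternatively, positivity follows directly: if $v_{0}=0$ then $u\equiv 0$ would solve the Cauchy problem with data $(0,0)$, and \ref{i1} yields uniqueness for the integrated equation $\gamma\left(\frac{p-1}{p}\right)^{1/p}u'=F(u)^{1/p}$ for $x>c_{m}$.

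\textbf{Step 2: the first integral.} On $x\geq c_{m}$ the function $u$ is convex and nondecreasing (the remark at the start of this section gives convexity). Multiplying by $u'$ and integrating as in Step 1 of Theorem \ref{thrm existence in one dim with osgood} gives \eqref{eqn quotient}. Integrating that once more yields, for every $x>c_{m}$,
\begin{equation*}
x-c_{m}=\gamma\left(\frac{p-1}{p}\right)^{1/p}\int_{v_{0}}^{u(x)}\frac{ds}{\{F(s)-F(v_{0})\}^{1/p}}\leq \ell(v_{0})-c_{m},
\end{equation*}
with $\ell(v_{0})$ as in \eqref{eqn dfn of ell}.

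\textbf{Step 3: finiteness of $\ell(v_{0})$ and the contradiction.} It remains to check that $\ell(v_{0})<\infty$ whenever $v_{0}>0$. Near $s=v_{0}$ we have $F(s)-F(v_{0})\geq f(v_{0})(s-v_{0})$ with $f(v_{0})>0$ by \ref{a1}, so the integrand is bounded by a constant times $(s-v_{0})^{-1/p}$, which is integrable since $p>1$. For large $s$ we have $F(s)-F(v_{0})\geq \tfrac12F(s)$, and \ref{KO} controls the tail. Hence the displayed inequality bounds $x$ for all $x>c_{m}$, which is absurd on $(c_{m},\infty)$. Equivalently, $u$ blows up at the finite point $\ell(v_{0})$ and so cannot be defined on all of $\R$.

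The main obstacle is Step 1, in particular justifying that the minimum exists and is positive. If the Cauchy-uniqueness alternative is used, it must rely on \ref{i1} and not on Lipschitz continuity of $f$, since $f$ is only assumed continuous. If $v_{0}=0$ were allowed, the integral defining $\ell(0)$ would diverge by \ref{i1}, and the argument would give no contradiction.
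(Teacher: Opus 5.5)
Your proposal is correct and follows the paper's argument. The paper observes that a solution on all of $\R$ forces $\ell(v_{0})=\infty$ in \eqref{eqn dfn of ell}, which under \ref{KO} is possible only if $\min_{\R}u=0$, contradicting the preceding lemma; this is your Steps 1--3 in contrapositive order, and your explicit check that $\ell(v_{0})<\infty$ for $v_{0}>0$ (via $F(s)-F(v_{0})\geq f(v_{0})(s-v_{0})$ near $v_{0}$ and \ref{KO} at infinity) supplies a step the paper leaves implicit.
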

    \begin{proof}
     If such a solution $u$ exists, then by \eqref{eqn dfn of ell} and \ref{KO} $u_{0}(:=\displaystyle\min_{\R}u(x))=0$, a contradiction to the above lemma.
    \end{proof}

  \begin{remark}
    In the proof of step 2 of Theorem \ref{thrm existence in one dim with osgood}, the Osgood condition is not used to prove $\ell(t)\to 0$ as $t\to \infty$.
 \end{remark}
 Now we come to the second possibility \ref{i2}. Define 
 \begin{equation*}
     L=\gamma \left(\frac{p-1}{p}\right)^{1/p}\int_{0}^{\infty} \frac{ds}{\left\{F(s)\right\}^{1/p}}
 \end{equation*}
 In view of the proofs of Theorem \ref{thrm existence in one dim with osgood}, one has
 \begin{theorem}\label{thrm existence without osgood}
     Assume \ref{KO}, \ref{a1}, and \ref{i2}. For any $a<b$ in $\R$ with $b-a\leq 2L$, there exists a unique $v$ solving \eqref{eqn in one dim}.
 \end{theorem}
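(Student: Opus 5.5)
The plan is to rerun the three steps of the proof of Theorem \ref{thrm existence in one dim with osgood}, replacing only the part of Step 2 that used \ref{i1}. We again work with
\begin{equation*}
\ell(t)=\gamma\left(\frac{p-1}{p}\right)^{1/p}\int_{0}^{\infty}\frac{ds}{\{F(s+t)-F(t)\}^{1/p}},
\end{equation*}
now on the closed half-line $t\in[0,\infty)$, where $\ell(0)=L$. Existence then reduces to showing that $\ell$ maps $[0,\infty)$ bijectively onto $(0,L]$. Uniqueness comes from checking that every solution is one of the explicitly constructed ones.

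\textbf{Properties of $\ell$.} Since $f$ is increasing and $f\ge 0$, we have $F(s+t)-F(t)=\int_t^{t+s}f\ge \int_0^s f=F(s)$. Moreover, $F(s+t)-F(t)$ is strictly increasing in $t$ for each $s>0$, because $f$ is strictly increasing. Hence $\ell$ is strictly decreasing, and in particular injective. The integrand is dominated by $F(s)^{-1/p}$, which is integrable near $0$ by \ref{i2} and near $\infty$ by \ref{KO}. Dominated convergence therefore gives continuity of $\ell$ on $[0,\infty)$, including at $t=0$, where $\ell(0)=L<\infty$. The argument of Step 2, which did not use the Osgood condition, still gives $\ell(t)\to 0$ as $t\to\infty$. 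By the intermediate value theorem, $\ell([0,\infty))=(0,L]$.

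\textbf{Existence.} Put $c=(a+b)/2$ and $\delta=(b-a)/2\le L$, and choose the unique $t_0\ge 0$ with $\ell(t_0)=\delta$. Define $\phi$ on $(-\delta,\delta)$ implicitly by
\begin{equation*}
\gamma\left(\frac{p-1}{p}\right)^{1/p}\int_{t_0}^{\phi(x)}\frac{ds}{\{F(s)-F(t_0)\}^{1/p}}=|x|.
\end{equation*}
When $t_0=0$ the left side is finite precisely because of \ref{i2}. Differentiating recovers the first integral \eqref{eqn quotient} on each side of $0$, with $\phi'(0)=0$. From this one checks that $\phi$ is $C^1$ and that $\gamma^p|\phi'|^{p-2}\phi'$ is absolutely continuous with derivative $f(\phi)$. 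Finally, $\phi\to\infty$ at $\pm\delta$ because $\ell(t_0)=\delta$. The function $v(x)=\phi(x-c)$ is then the required solution.

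\textbf{Uniqueness.} I expect this to be the main obstacle, because without \ref{i1} the initial value problem at level $0$ is not unique. For instance, $v\equiv 0$ and the $\phi$ above with $t_0=0$ both start with $v=v'=0$. Let $u$ be any solution. As observed at the start of the section, $u$ is convex and blows up at both ends, so its minimum set is a closed interval $[c_1,c_2]$ and $u'=0$ there.

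If the minimum value $u_0$ is positive, the minimum set is a single point, and energy integration from it reproduces \eqref{eqn dfn of ell} on both sides. This forces the minimum point to be the midpoint and $\ell(u_0)=\delta$, so $u_0=t_0$ by injectivity, and the implicit formula then determines $u$.

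If $u_0=0$, then $u\equiv 0$ on $[c_1,c_2]$. On $(c_2,b)$ the energy identity again gives \eqref{eqn quotient} with $v_0=0$, so $b-c_2=L$; similarly $c_1-a=L$. Hence $b-a=2L+(c_2-c_1)$, and the hypothesis $b-a\le 2L$ forces $c_1=c_2$, $\delta=L$ and $t_0=0$, so $u=v$ again.
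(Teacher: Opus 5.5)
Your proof is correct. For existence it is the paper's argument: the paper's whole proof is ``rerun Theorem \ref{thrm existence in one dim with osgood}, noting that $\ell(t)\to L$ as $t\to 0$ by dominated convergence,'' which is exactly your treatment of $\ell$ via the majorant $F(s)^{-1/p}\in L^{1}(0,\infty)$.

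Where you go beyond the paper is in what you make explicit.
\begin{itemize}
\item By putting $t=0$ in the domain of $\ell$ you get $\ell([0,\infty))=(0,L]$. This covers the endpoint case $b-a=2L$, with $t_0=0$ and \ref{i2} making the implicit formula finite. Read literally, the paper's $\ell$ on $(0,\infty)$ only has range $(0,L)$.
\item Your $|x|$ and $F(t_0)$ in the implicit formula also correct the corresponding Step 3 formula in the paper.
\item More substantially, the paper inherits uniqueness from Step 1 of Theorem \ref{thrm existence in one dim with osgood} (``positivity of the integrand''). That argument tacitly assumes a single minimum point with positive minimum value. Under \ref{i1} this is supplied by the subsequent lemma, but not under \ref{i2}, where, as you note, the initial value problem at level $0$ is not unique.
\end{itemize}

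Your analysis of the minimum set $[c_1,c_2]$ is what closes that gap. When $u_0>0$ you rule out a flat minimum because $f(u_0)>0$. When $u_0=0$ you show $b-a=2L+(c_2-c_1)$. This also explains why the flat-bottomed solutions of the following theorem appear exactly when $b-a>2L$.

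One small point: for $\ell(t)\to 0$ as $t\to\infty$, invoke your majorant $F(s)^{-1/p}$ explicitly rather than Step 2 alone. The bound $(sf(t))^{-1/p}$ used there only gives pointwise convergence of the integrand and is not integrable at infinity.
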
 
 \begin{proof}
     The proof follows the same approach as Theorem \ref{thrm existence in one dim with osgood}. the only change is that when $t\to 0$, $\ell(t)\to L$ by  the dominated convergence theorem.
 \end{proof}
 If $b-a>2L$, a solution can be constructed. To this end, up to a translation, we assume that $a=-b$. Let $v_{L}$
be the solution of \eqref{eqn in one dim} for $-a=b=L$ given by Theorem \ref{thrm existence without osgood}. Define the unique solution in this case as follows,
\begin{align*}
      \Tilde{v}(x)=0 \quad \forall~x\in [L-b,b+L],
  \end{align*}
 And it is implicitly defined by
 \begin{align*}
     \gamma\left(\frac{p-1}{p}\right)^{1/p}\int_{0}^{\Tilde{v}(x)}\frac{ds}{\left\{F(s)\right\}^{1/p}}=x-b+L \quad \forall x\in (-b, L-b)\cup (b-L, b).
 \end{align*}
\begin{theorem}
      Assume \ref{KO}, \ref{a1}, and \ref{i2}. For any $a<b$ in $\R$ with $b-a> 2L$, there exists a unique $v$ solving \eqref{eqn in one dim}. Also, $v=0$ in $[L-b+c,b+L+c]$
\end{theorem}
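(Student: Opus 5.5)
The plan is to reduce to a symmetric interval, build the solution explicitly by gluing a zero plateau to two one-sided blow-up profiles, and then show that any solution must coincide with this one. Up to a translation we take $(a,b)=(-b,b)$ with $b>L$. We write
\begin{equation*}
\Gamma(t):=\gamma\left(\frac{p-1}{p}\right)^{1/p}\int_{0}^{t}\frac{ds}{\{F(s)\}^{1/p}}, \qquad t\geq 0 .
\end{equation*}
By \ref{i2} and \ref{KO}, $\Gamma$ is finite, continuous and strictly increasing, with $\Gamma(0)=0$ and $\Gamma(t)\to L$ as $t\to\infty$.

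\textbf{Existence.} Define $v=0$ on $[L-b,b-L]$, and on $(b-L,b)$ define $v$ by $\Gamma(v(x))=x-(b-L)$; extend it evenly to $(-b,L-b)$. Since $\Gamma(\infty)=L$, we get $v(x)\to\infty$ as $x\to\pm b$. On $(b-L,b)$, differentiating the defining relation gives
\begin{equation*}
v'=\gamma^{-1}\left(\frac{p}{p-1}\right)^{1/p}F(v)^{1/p},
\end{equation*}
which is the first integral $\gamma^{p}\frac{p-1}{p}|v'|^{p}=F(v)$. Differentiating this again gives $\gamma^{p}(|v'|^{p-2}v')'=f(v)$ wherever $v'>0$.

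At the junction $x=b-L$ we have $v\to0$, so $v'\to0$. Moreover the flux $\gamma^{p}|v'|^{p-2}v'=c_{p,\gamma}F(v)^{(p-1)/p}$ tends to $0$, and its derivative $f(v)\to f(0)=0$. Hence $v\in C^{1}$ and the flux $\gamma^{p}|v'|^{p-2}v'$ is $C^{1}$ across the junction, with derivative equal to $f(v)$ everywhere, including $f(0)=0$ on the plateau. Therefore $v$ is a (classical, hence weak) solution of \eqref{eqn in one dim}, and it vanishes on the stated middle interval (after translating back by the centre $c$).

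\textbf{Uniqueness.} Let $u$ be any solution. First, in one dimension the flux $w=\gamma^{p}|u'|^{p-2}u'$ has weak derivative $f(u)$, which is continuous. So $w\in C^{1}$, $u'$ is continuous, $w$ is nondecreasing, and $u$ is convex. Consequently $u$ attains its minimum $u_0$, and the minimum set is a closed interval $[\alpha,\beta]$ on which $u'=0$.

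Suppose $u_0>0$. The computation of Step 1 of Theorem \ref{thrm existence in one dim with osgood} then applies and gives a half-length $\ell(u_0)$. Since $f$ is increasing, $F(s+t)-F(t)\geq F(s)$, and therefore $\ell(u_0)\leq L<b$. This contradicts the requirement that $u$ blows up exactly at $\pm b$. Hence $u_0=0$.

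Now multiply the equation by $u'$ on $(\beta,b)$ and integrate from $\beta$, using $u(\beta)=0=u'(\beta)$. This yields $\gamma^{p}\frac{p-1}{p}|u'|^{p}=F(u)$ with zero integration constant, and hence $\Gamma(u(x))=x-\beta$. Blow-up at $b$ forces $\beta+L=b$; symmetrically, $\alpha=-b+L$. Thus $u$ coincides with the $v$ constructed above.

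\textbf{Main obstacle.} I expect the delicate step to be justifying the first integral and the chain rule $(F(u))'=f(u)u'$ for a merely weak $W^{1,p}_{loc}$ solution, in particular across the points where $u'=0$ and the equation degenerates when $p>2$. I would handle this through the $C^{1}$ regularity of the flux noted above: it makes $u'=\gamma^{-1}\,\mathrm{sgn}(w)\,|w|^{1/(p-1)}$ continuous. Then the identity $\frac{d}{dx}\big(\frac{p-1}{p}\gamma^{p}|u'|^{p}\big)=u'\,w'$ can be verified separately on the open set $\{u'\neq0\}$ and trivially on its complement.
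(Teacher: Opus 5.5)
Your existence construction is the paper's. The paper also glues a zero plateau to two translated halves of the profile $\Gamma^{-1}$ (the halves of $v_L$), and it simply asserts the result. You have silently corrected two typos there:
\begin{itemize}
\item the plateau is $[L-b,b-L]$, whereas the paper's definition of $\tilde v$ (and hence the statement) has $b+L$, an interval that leaves the domain;
\item the left branch must be $\Gamma(v(x))=-x-b+L$, which your even extension produces, not the paper's $x-b+L$.
\end{itemize}
The paper checks neither the $C^{1}$ matching of the flux at $x=\pm(b-L)$ nor uniqueness. Your regularity--convexity--first-integral argument is therefore a genuine addition, and it is sound up to one omitted step.

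In the case $u_0>0$ you must first show that the minimum set $[\alpha,\beta]$ is a single point. Otherwise $\ell(u_0)\le L<b$ gives no contradiction. The two one-sided computations only yield $b-\beta=\ell(u_0)=\alpha+b$, that is $2b=2\ell(u_0)+(\beta-\alpha)$. This is compatible with $b>L$ when $\beta-\alpha$ is large. The fix is immediate: on $(\alpha,\beta)$ the flux $w$ vanishes identically, so $f(u_0)=w'=0$ there, which by \ref{a1} forces $u_0=0$.

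Similarly, when you pass from the first integral to $\Gamma(u(x))=x-\beta$, state explicitly that $u'>0$ on $(\beta,b)$. If $u'(x_1)=0$ for some $x_1>\beta$, monotonicity of $u'$ gives $u\equiv 0$ on $[\beta,x_1]$, contradicting the maximality of $\beta$. This positivity is what lets you divide by $F(u)^{1/p}$ and let the lower endpoint tend to $\beta$. It is also exactly what rules out the non-uniqueness of $u'=cF(u)^{1/p}$ from the level $0$ that \ref{i2} permits.
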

\begin{proof}
    Aided by the above discussion, the solution is given by $v(x)=\Tilde{v}(x-c)$.
\end{proof}
\subsection{Boundary asymptotics:}

\begin{proposition}
    Let $u$ be a solution of \eqref{eqn in one dim}. Then
    \begin{equation*}
       \lim_{x\to b}\frac{\Psi_{H,p}(u(x))}{b-x}= \frac{1}{\gamma},
    \end{equation*}
 and 
  \begin{equation*}
       \lim_{x\to a}\frac{\Psi_{H,p}(u(x))}{x-a}= \frac{1}{\gamma}.
    \end{equation*}
\end{proposition}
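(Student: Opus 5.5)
We work from the first-integral identity \eqref{eqn quotient} established in the proof of Theorem \ref{thrm existence in one dim with osgood}. Let $u$ solve \eqref{eqn in one dim} and let $c_m$ be its minimum point, with $v_0=u(c_m)$. By convexity, $u$ is increasing on $(c_m,b)$. On this interval \eqref{eqn quotient} gives
\begin{equation*}
\gamma\left(\frac{p-1}{p}\right)^{1/p}\int_{u(x)}^{\infty}\frac{ds}{\{F(s)-F(v_0)\}^{1/p}}=b-x.
\end{equation*}
To get this, integrate \eqref{eqn quotient} from $x$ to $b$ and change variables $s=u(t)$, using $u(t)\to\infty$ as $t\to b$. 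The left side is finite by \ref{KO}, since $F(s)-F(v_0)\sim F(s)$ as $s\to\infty$; the last point is checked below. If \ref{i2} holds and $u$ vanishes on a middle interval, we simply take $v_0=0$ and work to the right of that flat piece. The formula is then literally $\gamma\Psi_{H,p}(u(x))=b-x$.

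The claim then reduces to the limit
\begin{equation*}
\lim_{r\to\infty}\frac{\int_r^\infty \{F(s)-F(v_0)\}^{-1/p}\,ds}{\int_r^\infty F(s)^{-1/p}\,ds}=1.
\end{equation*}
Once we have it, putting $r=u(x)\to\infty$ as $x\to b$ gives
\begin{equation*}
\frac{\Psi_{H,p}(u(x))}{b-x}=\frac{\Psi_{H,p}(u(x))}{\gamma\left(\frac{p-1}{p}\right)^{1/p}\int_{u(x)}^\infty\{F-F(v_0)\}^{-1/p}}\longrightarrow\frac{1}{\gamma}.
\end{equation*}

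To prove the limit, note that $F(s)\to\infty$ by \ref{a1}, since $f(s)\to\infty$. So for any $\varepsilon>0$ there is $R$ with
\begin{equation*}
1\le \left(\frac{F(s)}{F(s)-F(v_0)}\right)^{1/p}\le 1+\varepsilon \quad\text{for } s\ge R.
\end{equation*}
Multiplying the integrand of $\Psi_{H,p}$ by this ratio and integrating over $(r,\infty)$ with $r\ge R$ gives
\begin{equation*}
\int_r^\infty F^{-1/p}\,ds\le\int_r^\infty (F-F(v_0))^{-1/p}\,ds\le(1+\varepsilon)\int_r^\infty F^{-1/p}\,ds,
\end{equation*}
which yields the ratio limit. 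This step also confirms that the left side of the first display is finite.

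The endpoint $a$ follows the same way. Either use the symmetry $u(x)=u(2c_m-x)$ from Step 1, or repeat the argument with \eqref{eqn u2} on $(a,c_m)$, where $u$ is decreasing and the distance is $x-a$.

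The only delicate point is justifying the integrated identity from $x$ up to the blow-up point $b$, including the improper integral. This follows from the monotone convergence theorem, since $u$ is monotone with $u\to\infty$ near $b$. In the \ref{i2} case one must also handle the possibility that $u\equiv0$ on a middle segment; there \eqref{eqn quotient} holds with $v_0=0$ on each side of the flat part. Everything else is the elementary ratio limit above.
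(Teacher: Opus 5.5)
Your proposal is correct and follows essentially the same route as the paper. You integrate \eqref{eqn quotient} from $x$ to $b$ to identify $b-x$ with $\gamma\left(\frac{p-1}{p}\right)^{1/p}\int_{u(x)}^{\infty}\{F(s)-F(v_0)\}^{-1/p}\,ds$. You then squeeze this against $\gamma\Psi_{H,p}(u(x))$ using $F(s)\to\infty$: the paper does this via $F(s)\geq \epsilon F(v_0)$, you via the ratio bound $(F/(F-F(v_0)))^{1/p}\le 1+\varepsilon$. The endpoint $a$ is handled the same way through \eqref{eqn u2}.

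Your explicit monotone-convergence justification of the improper integral and your treatment of the flat piece ($v_0=0$ under \ref{i2}) are small clarifications that the paper leaves implicit.
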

\begin{proof}
    Clearly 
    \begin{equation*}
        \left\{F(s)\right\}^{1/p}\geq \left\{F(s)-F(v_{0})\right\}^{1/p}.
    \end{equation*}
   On the other hand, since $F(s)\to \infty$ as $s\to\infty$, given $\epsilon>0$, there exists $s(\epsilon)>0$ such that $F(s)\geq \epsilon F(v_{0})$ for all $s\geq s(\epsilon)$;
   which implies
   \begin{equation*}
       \left\{F(s)-F(v_{0})\right\}^{1/p}\geq \left(1-\frac{1}{\epsilon}\right)^{1/p}\left\{F(s)\right\}^{1/p}
   \end{equation*}
   With these, one can write
   \begin{equation}\label{eqn double inequality}
       \left(1-\frac{1}{\epsilon}\right)^{1/p}\frac{1}{ \left\{F(s)-F(v_{0})\right\}^{1/p}}\leq \frac{1}{\left\{F(s)\right\}^{1/p}}\leq \frac{1}{ \left\{F(s)-F(v_{0})\right\}^{1/p}}.
   \end{equation}
   Since $u(t)\to\infty$ as $t\to b$, we obtain $r>0$ such that for all $t>b-r$, $u(t)\geq s(\epsilon)$. For $t>b-r$ integrating the above inequality from $u(t)$ to $\infty$, we obtain 
   \begin{equation}\label{eqn double integral ineq}
       \gamma\left(\frac{p-1}{p}\right)^{1/p}\left(1-\frac{1}{\epsilon}\right)^{1/p}\int_{u(t)}^{\infty}\frac{1}{ \left\{F(s)-F(v_{0})\right\}^{1/p}}\leq \gamma\Psi_{H,p}(u(t)) \leq \gamma\left(\frac{p-1}{p}\right)^{1/p}\int_{u(t)}^{\infty}\frac{1}{ \left\{F(s)-F(v_{0})\right\}^{1/p}}.
   \end{equation}
   Integrating \eqref{eqn quotient} from $t$ to $b$,  employing  the change of variables $s=v(x)$, and using this in the above inequality we obtain 
   \begin{equation*}
       \left(1-\frac{1}{\epsilon}\right)^{1/p}\leq \frac{\gamma\Psi_{H,p}(u(t))}{b-t}\leq 1.
   \end{equation*}
   As $t\to b$, therefore we obtain  
   \begin{equation*}
       \left(1-\frac{1}{\epsilon}\right)^{1/p}\leq \liminf_{t\to b}\frac{\gamma\Psi_{H,p}(u(t))}{b-t}\leq \limsup_{t\to b}\frac{\gamma\Psi_{H,p}(u(t))}{b-t}\leq 1.
   \end{equation*}
   and also $\epsilon\to \infty$, we obtain 
    \begin{equation*}
     \lim_{t\to b}\frac{\Psi_{H,p}(u(t))}{b-t}= \frac{1}{\gamma}.
   \end{equation*}
   
   Next, as $u(t)\to \infty$ as $t\to a$, there exists $r>0$ such that for all $t<a+r$ we have $u(t)>s(\epsilon)$. The counterpart of \eqref{eqn quotient} for $u=\Tilde{v}$, which solves \eqref{ODE2} is 
   \begin{equation*}
       \gamma\left(\frac{p-1}{p}\right)^{1/p}\frac{-u'(x)}{\left\{F\left(u(x)\right)-F(v_{0})\right\}^{1/p}}=1.
   \end{equation*}
   Integrating the above equation from $a$ to $t$ and using a change of variables $s=v(x)$, we obtain  
   \begin{equation*}
       t-a=\gamma\left(\frac{p-1}{p}\right)^{1/p}\int_{u(t)}^{\infty}\frac{ds}{\left\{F(s)-F(v_{0})\right\}^{1/p}}.
   \end{equation*}
   Using this equation in \eqref{eqn double integral ineq}, we have 
    \begin{equation*}
       \left(1-\frac{1}{\epsilon}\right)^{1/p}\leq \frac{\gamma\Psi_{H,p}(u(t))}{t-a}\leq 1.
   \end{equation*}
   As $\epsilon\to \infty$ we obtain 
   \begin{equation*}
       \lim_{t\to a}\frac{\Psi_{H,p}(u(t))}{t-a}=\frac{1}{\gamma}.\qedhere
   \end{equation*}
\end{proof}
Since $\delta(x)=min\{b-x,x-a\}$ we have the following boundary asymptotics
\begin{theorem}\label{theorem boy asylum 1d}
        Let $u$ be a solution of \eqref{eqn in one dim}. Then
    \begin{equation*}
       \lim_{\delta(x)\to 0}\frac{\Psi_{H,p}(u(x))}{\delta(x)}= \frac{1}{\gamma}.
    \end{equation*}
\end{theorem}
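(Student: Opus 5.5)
The statement follows directly from the preceding Proposition, which gives the one-sided limits
\begin{equation*}
\lim_{x\to b}\frac{\Psi_{H,p}(u(x))}{b-x}=\frac{1}{\gamma},\qquad \lim_{x\to a}\frac{\Psi_{H,p}(u(x))}{x-a}=\frac{1}{\gamma}.
\end{equation*}
The plan is to observe that in one dimension $\delta(x)=\min\{b-x,\,x-a\}$, and that the condition $\delta(x)\to 0$ means exactly that $x$ tends to $a$ or to $b$.

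First fix $\epsilon>0$. By the Proposition there is $r_{b}>0$ such that for $x\in(b-r_{b},b)$,
\begin{equation*}
\left|\frac{\Psi_{H,p}(u(x))}{b-x}-\frac{1}{\gamma}\right|<\epsilon,
\end{equation*}
and similarly there is $r_{a}>0$ giving the same bound with $x-a$ on $(a,a+r_{a})$. Set $r=\min\{r_{a},r_{b},(b-a)/2\}$.

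Next, take $x$ with $\delta(x)<r$. Since $r\leq (b-a)/2$, the two distances $b-x$ and $x-a$ cannot both be below $r$, so exactly one of them equals $\delta(x)$. If $\delta(x)=b-x$, then $x\in(b-r,b)$, and the quotient $\Psi_{H,p}(u(x))/\delta(x)$ coincides with the first quotient, hence is within $\epsilon$ of $1/\gamma$. If $\delta(x)=x-a$, the second estimate gives the same bound. Thus
\begin{equation*}
\left|\frac{\Psi_{H,p}(u(x))}{\delta(x)}-\frac{1}{\gamma}\right|<\epsilon \quad\text{whenever } \delta(x)<r,
\end{equation*}
which is the claimed limit.

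There is no real obstacle here. The only point that needs care is choosing $r\leq(b-a)/2$, so that each $x$ near the boundary is near exactly one endpoint and the one-sided limits apply unambiguously. If one wants to tie this to Theorem \ref{dual asym}, note also that in one dimension $H_{0}(t)=|t|/\gamma$, so $\delta_{H_{0}}=\delta/\gamma$ and the statement reads $\Psi_{H,p}(u)/\delta_{H_{0}}\to 1$. This consistency check is not needed for the proof.
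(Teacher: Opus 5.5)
Your proposal is correct and matches the paper's argument: the paper deduces the theorem directly from the preceding Proposition using $\delta(x)=\min\{b-x,\,x-a\}$, and you have written out the $\epsilon$--$r$ details it leaves implicit. (The restriction $r\le (b-a)/2$ does no harm but is not needed, since whichever of $b-x$, $x-a$ equals $\delta(x)<r$ already places $x$ in the corresponding one-sided neighbourhood.)
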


    If we consider the integration with respect to the measure corresponding to $H_{0}$, we can have  
   \begin{proof}[\textbf{Proof of Theorem \ref{dual asym} for $n=1$}]
      Since $H_{0}(x)=\frac{|x|}{\gamma}$, we have $\delta(x)=\gamma \delta_{H_{0}}(x)$. Thus, Theorem \ref{theorem boy asylum 1d} proves Theorem \ref{dual asym}.
   \end{proof}

\section{Existence in higher dimension}\label{higher dim}
 In this section, we discuss the existence of a solution to \eqref{finsler large eqn} in higher dimensions as stated in Theorem \ref{main theorem existence}.  We begin with the proof of the comparison principle stated in Theorem \ref{comparison principle}. 

    \begin{proof}[\textbf{Proof of Theorem \ref{comparison principle}}]
    Suppose not. Then, by continuity, there exists an open ball $B\subset\Omega$ such that $u>v$ on $\overline{B}$. Let $\epsilon_{0}>0$ be such that for every $\epsilon\in (0,\epsilon_{0})$, $0<u-(1+\epsilon)v$ in $B$. Also, by \eqref{boundary condition for comp princ}, let $\delta>0$ be such that $u-(1+\epsilon)v\leq 0$ in $\Omega_{\delta}:=\{x\in\Omega~|~ d(x,\partial\Omega)<\delta\}$ for all $\epsilon\in(0,\epsilon_{0})$. 

On the one hand, since $f$ is strictly increasing, we have
\begin{equation}\label{controduction ineq1}
    \int_{B}\left(f(u)-f(v)\right)(u-v)>0.
\end{equation}
On the other hand, by \eqref{ineq for comp princ}, we have for all $\phi\in W_{0}^{1,p}(\Omega)$
\begin{equation}\label{eqn weak comp ineq}
     \int_{\Omega}\left<\Fin{u}-\Fin{v}, \nabla \phi\right>+(f(u)-f(v))\phi \ dx\leq 0
\end{equation}
Let $\Omega'\subset \Omega\setminus\Omega_{\delta}$ be a subset such that $u-(1+\epsilon)v>0$ in $\Omega'$ and vanishes on $\partial\Omega'$. Then $(u-(1+\epsilon)v)_{+}\in W_{0}^{1,p}(\Omega')$. Define
\begin{equation*}
       I_{\epsilon}:=\int_{B}\left(f(u)-f(v)\right)(u-(1+\epsilon)v)_{+}\ dx.
\end{equation*}
By the definition of $\Omega'$, we have
\begin{equation*}
  I_{\epsilon}  \leq \int_{\Omega'} \left(f(u)-f(v)\right)(u-(1+\epsilon)v)\ dx.
\end{equation*}
Using Lemma \ref{lemma monotone operator} with $x=\nabla u$ and $y=(1+\epsilon)\nabla v$, we can write
\begin{equation*}
    \begin{split}
        I_{\epsilon}\leq  &\int_{\Omega'}\left<\Fin{u}-\Fin{(1+\epsilon)v}, \nabla (u-(1+\epsilon)v)\right>\ dx +\\
        &\quad +\int_{\Omega'} \left(f(u)-f(v)\right)(u-(1+\epsilon)v)\ dx.
    \end{split}
\end{equation*}

Using the homogeneity of $H$, first order binomial expansion on $(1+\epsilon)^{p-1}$ for $p\geq 2$ in the preceding equation. Also, by the inequality \eqref{eqn weak comp ineq} and $(4)$ of Lemma \ref{lemma_properties of H}, one has
\begin{equation*}
    \begin{split}
        I_{\epsilon}
        \leq & \int_{\Omega'}\left<\Fin{u}-(1+(p-1)\epsilon+o(\epsilon))\Fin{v}, \nabla (u-(1+\epsilon)v)\right>\ dx\\
        &+\int_{\Omega'} \left(f(u)-f(v)\right)(u-(1+\epsilon)v)\ dx\\
        \leq &-\left((p-1)\epsilon+o(\epsilon)\right) \int_{\Omega'}\left<\Fin{v}, \nabla(u-(1+\epsilon)v)\right>\ dx\\
        \leq & -\left((p-1)\epsilon +o(\epsilon)\right)\left\{\int_{\Omega'}\left<\Fin{v}, \nabla u \right>\ dx -(1+\epsilon) \int_{\Omega'}H^{p}(\nabla v)\ dx\right\} .
    \end{split}
\end{equation*}
As $\epsilon\to 0$, applying Fatou's lemma to the definition of $I_{\epsilon}$ and then using the above inequality, we obtain 
\begin{equation}\label{contraduction ineq2}
    \int_{B}\left(f(u)-f(v)\right)(u-v)\ dx\leq \liminf_{\epsilon\to 0}I_{\epsilon}\leq 0.
\end{equation}
A contradiction to \eqref{controduction ineq1}.  Thus, $u\leq v$ on $\Omega$. 
  \end{proof}

Since $\Omega$ is an open set, given $x\in \Omega$ choose $R>0$ such that $B_{R}(x)\subset \Omega$. Since $|x-y|\leq \theta_{2}H_{0}(x-y)$, take any $r\leq R/\theta_{2}$, then $\mathcal{W}_{r}(x)\subset B_{R}(x)\subset\Omega$. Using this, we construct a local barrier to a large solution, which is radial on a Wulff ball.  
  \begin{proposition}\label{prop_weak_local_bound}
    Let $\Omega$ be a bounded domain in $\rn$, $n\geq 2$. Assume \ref{KO}, \ref{a1}. Let $u\in W_{loc}^{1,p}(\Omega)\cap C(\Omega)$ be a weak subsolution of \eqref{finsler large eqn}. For any $x_{0}\in \Omega$ and $R>0$ such that $\mathcal{W}_{R}(x_{0})\subset \subset \Omega$, we have
    \begin{equation}\label{eqn_local_bound}
        u(x)\leq \omega\Big(\frac{R}{2}\Big) \quad \text{for all } x\in \mathcal{W}_{\frac{R}{2}}(x_{0}).
    \end{equation}
    Where, $\omega$ is a solution of
     \begin{equation}\label{eqn omega is a large solution}
        \begin{cases}
            \left(|\omega'(t)|^{p-2}\omega'(t)\right)'=f(\omega(t)) \quad &\text{in } (0,2R)\\
            \omega(t)\to\infty \quad &\text{as } t\to 0, 2R.
        \end{cases}
    \end{equation}
\end{proposition}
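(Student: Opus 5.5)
Given the subsolution $u$ and the Wulff ball $\mathcal{W}_{R}(x_{0})\subset\subset\Omega$, the plan is to build a radial large solution on the Wulff ball and then compare. The candidate barrier is
\begin{equation*}
V(x):=\omega\big(R-H_{0}(x-x_{0})\big),\qquad x\in\mathcal{W}_{R}(x_{0}).
\end{equation*}
Here $\omega$ is the one-dimensional large solution of \eqref{eqn omega is a large solution} on $(0,2R)$, given by Theorem \ref{thrm existence in one dim with osgood} or Theorem \ref{thrm existence without osgood} with $\gamma=1$. By the symmetry shown in Step 1 of that proof, $\omega$ is symmetric about $R$, convex, and attains its minimum at $t=R$. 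Hence $\omega$ is decreasing on $(0,R]$ and $\omega'(R)=0$. Writing $\rho=H_{0}(x-x_{0})$, the function $V$ blows up on $\partial\mathcal{W}_{R}(x_{0})$.

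The first step is to compute $\Delta_{H}^{p}V$ using Lemma \ref{H0}. Since $\nabla V=-\omega'(R-\rho)\nabla H_{0}$ and $-\omega'\ge 0$ on $(0,R]$, homogeneity gives $H(\nabla V)=|\omega'|$. Because $\nabla H$ is $0$-homogeneous, $\nabla H(\nabla V)=\nabla H(\nabla H_{0}(x-x_{0}))=(x-x_{0})/\rho$ by part (5) of Lemma \ref{H0}. Therefore
\begin{equation*}
H^{p-1}(\nabla V)\nabla H(\nabla V)=|\omega'(R-\rho)|^{p-1}\frac{x-x_{0}}{\rho}.
\end{equation*}
Taking the divergence, with $\operatorname{div}\frac{x-x_{0}}{\rho}=\frac{n-1}{\rho}$ and $\langle\nabla\rho,x-x_{0}\rangle=\rho$ (Lemma \ref{lemma_properties of H}(4) applied to $H_{0}$), yields
\begin{equation*}
\Delta_{H}^{p}V=\big(|\omega'|^{p-2}\omega'\big)'(R-\rho)+\frac{n-1}{\rho}|\omega'(R-\rho)|^{p-1}=f(V)+\frac{n-1}{\rho}|\omega'|^{p-1}\ge f(V).
\end{equation*}
This is the wrong direction for a supersolution. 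So I plan to change the choice: instead of the inner profile, use the outer half of $\omega$, i.e. $V(x)=\omega(2R-R+\rho)=\omega(R+\rho)$, which is increasing in $\rho$. Then $\nabla H(\nabla V)=(x-x_{0})/\rho$ again, and
\begin{equation*}
\Delta_{H}^{p}V=f(V)-\frac{n-1}{\rho}\,\omega'(R+\rho)^{p-1}\cdot(-1)
\end{equation*}
must be checked with signs carefully. In either version the extra term is $\frac{n-1}{\rho}\times(\text{flux})$ with the flux sign fixed by the monotonicity of the profile. If both choices fail to give $\Delta_{H}^{p}V\le f(V)$, I would rescale the radius instead: take $V(x)=\omega(R-\rho)$ on $\mathcal{W}_{R}$ with $\omega$ solving the problem with $f$ replaced by $\lambda f$, or use $W(x)=\omega_{\lambda}(\dots)$ with a constant $\lambda>1$ absorbing the term $\frac{n-1}{\rho}|\omega'|^{p-1}$. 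The absorption uses the first integral $|\omega'|^{p}=\frac{p}{p-1}(F(\omega)-F(\omega(R)))$, which shows that $|\omega'|^{p-1}$ is controlled by $f(\omega)$ away from $\rho=0$, since \ref{KO} forces $F$ to grow faster than $s^{p}$. Near $\rho=0$ the flux vanishes ($\omega'(R)=0$) like $\rho^{1/(p-1)}$ or faster, so $\frac{1}{\rho}|\omega'|^{p-1}$ stays bounded and can also be absorbed, possibly after a small translation of the profile.

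The second step, once a genuine supersolution $V$ with $V\to\infty$ on $\partial\mathcal{W}_{R}(x_{0})$ is in hand, is to apply Theorem \ref{comparison principle} on $\mathcal{W}_{R}(x_{0})$. The boundary ratio condition \eqref{boundary condition for comp princ} holds because $u$ is continuous, hence bounded on $\overline{\mathcal{W}_{R}}$, while $V\to\infty$. Note that $\mathcal{W}_{R}$ has only $C^{1,1}$-type boundary, but the proof of Theorem \ref{comparison principle} does not use smoothness of the boundary. This gives $u\le V$. Finally, on $\mathcal{W}_{R/2}(x_{0})$ we have $R-\rho\in(R/2,R]$, and since $\omega$ is decreasing there, $V\le\omega(R/2)$, giving \eqref{eqn_local_bound}.

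The main obstacle is the first-order term $\frac{n-1}{\rho}|\omega'|^{p-1}$: the anisotropic computation itself is clean thanks to Lemma \ref{H0}(5), but obtaining a supersolution requires the sign bookkeeping above and possibly the constant-rescaling trick. A further point to verify is that the singularity of $H_{0}$ at $x_{0}$ does not break the weak formulation; this holds because the flux vanishes there.
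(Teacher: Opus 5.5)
\textbf{The gap.} The gap is the step you flag yourself: you never obtain a supersolution on $\mathcal{W}_R(x_0)$.

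Your computation $\Delta_H^pV=f(V)+\frac{n-1}{\rho}|\omega'(R-\rho)|^{p-1}\ge f(V)$ for $V=\omega(R-H_0(x-x_0))$ is correct, so this $V$ is a subsolution. This is exactly the paper's barrier, and the paper's verification of it in \eqref{eqn_eta'} fails. The minus sign in the second line is lost in the third: the coarea substitution $t=R-H_0(x-x_0)$ introduces no sign, and $\frac{x-x_0}{H_0(x-x_0)}=\frac{\theta(x)}{R}$. Keeping that sign, $\omega'\le0$ together with $\partial_s\phi\,(R-s)^{n-1}\ge\partial_s\big(\phi\,(R-s)^{n-1}\big)$ gives $\int\langle H^{p-1}(\nabla V)\nabla H(\nabla V),\nabla\phi\rangle\,dx\le-\int f(V)\phi\,dx$. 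That is the subsolution inequality, in agreement with your pointwise formula.

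Your fallbacks do not close the gap as written:
\begin{itemize}
\item The ``outer half'' $\omega(R+\rho)$ is the same function as $\omega(R-\rho)$, since $\omega$ is symmetric about $R$.
\item A rescaling has to weaken the nonlinearity. If $\omega_\lambda$ solves \eqref{eqn omega is a large solution} with $\lambda f$ in place of $f$, then $\Delta_H^pV=\lambda f(V)+\frac{n-1}{\rho}|\omega_\lambda'(R-\rho)|^{p-1}$, so $\lambda>1$ cannot work.
\item The extra term is not a perturbation near the centre. It tends to $(n-1)\lambda f(\omega_\lambda(R))$ as $\rho\to0$, so knowing that it ``stays bounded'' absorbs nothing.
\end{itemize}
What works is the monotonicity estimate
\[
|\omega_\lambda'(R-\rho)|^{p-1}=\lambda\int_{R-\rho}^{R}f(\omega_\lambda(s))\,ds\le\lambda\rho\,f\big(\omega_\lambda(R-\rho)\big).
\]
It uses only $\omega_\lambda'(R)=0$ and the fact that $\omega_\lambda$ decreases on $(0,R]$. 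It gives $\Delta_H^pV\le n\lambda f(V)$, so $\lambda=1/n$ produces a genuine supersolution. After that, your comparison step goes through.

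\textbf{What this proves, and why the stated bound cannot be reached.} With the repaired barrier, your last step yields $u\le\omega_{1/n}(R/2)$ on $\mathcal{W}_{R/2}(x_0)$, not \eqref{eqn_local_bound}. No choice of barrier can give \eqref{eqn_local_bound} with $\omega$ solving the problem for $f$ itself, because for $n\ge2$ that statement is false. Here is a counterexample.
\begin{itemize}
\item Take $p=2$, $H=|\cdot|$, $f(t)=t^q$ with $q>1$, and let $U$ be the large solution of $B_R(x_0)$.
\item The slab solution $S(x)=\omega(R+x_1-x_{0,1})$ solves the equation in $B_R(x_0)$.
\item For $R'<R$, the large solutions of $B_{R'}(x_0)$ are $(R/R')^{2/(q-1)}U\big(x_0+\tfrac{R}{R'}(x-x_0)\big)$. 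Comparing $S$ with them and letting $R'\to R$ gives $S\le U$.
\item The strong maximum principle for $\Delta(U-S)=c(x)(U-S)$ with $c\ge0$ then gives $U>S$. In particular $U>\omega(3R/2)=\omega(R/2)$ at $x_0+\frac R2e_1$.
\item The large solutions of $B_{R+\epsilon}(x_0)$ are admissible $u$, and by the same scaling they converge to $U$ as $\epsilon\to0$. So for small $\epsilon$ they exceed $\omega(R/2)$ somewhere in $B_{R/2}(x_0)$.
\end{itemize}
The provable statement is therefore the one with $f/n$ in \eqref{eqn omega is a large solution}. It is enough for the local $L^\infty$ bound used in the existence proof. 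It does lose the constant $1$ where the paper reuses this barrier in Lemma \ref{lemma asym 1}.
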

 \begin{proof}
     Using $\omega$, we construct a radially symmetric weak boundary blow-up  supersolution $v$ of \eqref{finsler eqn} in $\mathcal{W}_{R}(x_{0})$. Then the bound is obtained by Theorem \ref{comparison principle} and the radial symmetry of $v$.

    Define $v(x)=\omega(R-H_{0}(x-x_{0}))\in C(\mathcal{W}_{R}(x_{0}))$. For any $\phi\in C_{c}^{\infty}(\mathcal{W}_{R}(x_{0}))$, $\phi\geq 0$. Setting $x=x_{0}+\frac{R-t}{R}\theta(x)$, where $H_{0}(x-x_{0})=R-t$ and $\theta(x)\in \partial \mathcal{W}_{R}(x_{0})$, we obtain  by Lemma \ref{lemma_properties of H}, Lemma \ref{H0} and a change of variables ($ R-H_{0}(x-x_{0})\to t$) that
     \begin{equation}\label{eqn_eta'}
     \begin{split}
    &\int_{\mathcal{W}_{R}(x_{0})}\left<\Fin{v}, \nabla\phi \right>\ dx\\
    &=-\int_{\mathcal{W}_{R}(x_{0})}\left<|\omega'(R-H_{0}(x-x_{0})|^{p-2}\omega'(R-H_{0}(x-x_{0})) \frac{x-x_{0}}{H_{0}(x-x_{0})}, \nabla \phi \right>\ dx\\
      &=\int_{0}^{R}\int_{\partial \mathcal{W}_{R-t}(x_{0})}\left<|\omega'(t)|^{p-2}\omega'(t) \frac{\theta(x)}{R}, \nabla \phi(x) \right> \ \frac{d\mathcal{H}^{n-1}(x)}{|\nabla H_{0}(x-x_{0})|} dt\\
        &=\int_{0}^{R}|\omega'(t)|^{p-2}\omega'(t)~ \int_{\partial \mathcal{W}_{R}(x_{0})}-\frac{\partial \phi}{\partial t}\left(x_{0}+\frac{R-t}{R}\theta(x)\right) \frac{(R-t)^{n-1}}{R^{n-1}} \ \frac{d\mathcal{H}^{n-1}(x)}{|\nabla H_{0}(\theta(x))|} dt.\\ 
         \end{split}
     \end{equation}
    Where we have by the chain rule that $-\frac{\partial \phi}{\partial t}(t,x)=\frac{\theta(x)}{R}\cdot \nabla \phi(t,x)$. 
     
    Notice that, as $\phi\geq 0$ and $s<R$, 
    \begin{equation*}
    \begin{split}
        \frac{\partial \phi}{\partial s}(s,x) (R-s)^{n-1}&=\frac{\partial}{\partial s}\left(\phi(s,x)(R-s)^{n-1}\right)+(n-1)\phi(s,x)(R-s)^{n-2}\\
        &\geq \frac{\partial}{\partial s}\left(\phi(s,x)(R-s)^{n-1}\right).
        \end{split}
    \end{equation*}
    And, $\omega$ is a solution of equation \eqref{eqn omega is a large solution} implies $\omega'\leq 0$ on $(0,R)$. Therefore, by \eqref{eqn_eta'}, we obtain  
    \begin{equation*}
        \begin{split}
            &\int_{\mathcal{W}_{R}(x_{0})}\left<\Fin{v}, \nabla\phi \right>\ dx\geq \\
            &\geq \int_{0}^{R}|\omega'(t)|^{p-2}\omega'(t)~ \int_{\partial \mathcal{W}_{R}(x_{0})}-\frac{\partial }{\partial t}\left(\phi(t,x) \frac{(R-t)^{n-1}}{R^{n-1}}\right) \ \frac{d\mathcal{H}^{n-1}(x)}{|\nabla H_{0}(\theta(x))|} dt.\\
            &\geq -\int_{\partial \mathcal{W}_{R}(x_{0})}\int_{0}^{R}f(\omega(t))\phi\left(x_{0}+\frac{R-t}{R}\theta(x)\right)\frac{(R-t)^{n-1}}{R^{n-1}}\ dt \ \frac{d\mathcal{H}^{n-1}(x)}{|\nabla H_{0}(\theta(x))|}\\
            &\geq -\int_{0}^{R} \int_{\partial \mathcal{W}_{R-t}(x_{0})}f(\omega(t))\phi(t,x)\ \frac{d\mathcal{H}^{n-1}(x)}{|\nabla H_{0}(\theta(x))|}\ dt\\
            &\geq -\int_{\mathcal{W}_{R}(x_{0})}f(v(x))\phi(x)\ dx.
        \end{split}
    \end{equation*}
    Also,$v(x)\to \infty$ as $H_{0}(x-x_{0})\to R$.
Since $u<\infty$ in $\mathcal{W}_{R}(x_{0})$, Theorem \ref{comparison principle} implies 
    \begin{equation*}
        u(x)\leq v(x)\quad \text{for all }x\in \mathcal{W}_{R}(x_{0}).
    \end{equation*}
    This and the monotonicity of $\omega$ in $(0,R)$ leads to \eqref{eqn_local_bound}. 
 \end{proof}
 \begin{remark}
     The argument in the above proposition is independent of the boundary values. Thus, the same holds for solutions with finite Dirichlet boundary values. 
 \end{remark}
 The next proposition gives the local $L^{p}$ bound for the gradient of the solution.
 \begin{proposition}\label{prop_grad lp bound}
     Let $\Omega$ be a bounded domain and let $u\in W_{loc}^{1,p}(\Omega)\cap C(\Omega)$ is the solution of \eqref{finsler large eqn} then there exists a positive constant $C=C(p,K, \theta_{1},\theta_{2})$ for any compact set $K$ in $\Omega$ such that  
     \begin{equation*}
         \|\nabla u\|^{p}_{L^{p}(K)}\leq C \left(1+f\left(\|u\|_{L^{\infty}(\Tilde{K})}\right)\right)\|u\|_{L^{\infty}(\Tilde{K})}.
     \end{equation*}
     for some compact set $\Tilde{K}$ such that $K\subset \Tilde{K}\subset \Omega$.
 \end{proposition}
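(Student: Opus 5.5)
The plan is to run a Caccioppoli-type energy estimate. The extra factor $M^{p-1}$ that such an estimate normally produces will be absorbed into the constant using the local bound of Proposition \ref{prop_weak_local_bound}; this is legitimate because the statement allows $C$ to depend on $K$.

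\textbf{Choice of sets and test function.} Fix a compact $K\subset\Omega$ and set $d:=\operatorname{dist}(K,\partial\Omega)>0$. Choose $\Tilde K:=\{x:\operatorname{dist}(x,K)\le d/2\}$, which is compact in $\Omega$, and a cutoff $\eta\in C_c^\infty(\operatorname{int}\Tilde K)$ with $0\le\eta\le1$, $\eta=1$ on $K$ and $|\nabla\eta|\le 4/d$. Write $M:=\|u\|_{L^\infty(\Tilde K)}$, which is finite since $u\in C(\Omega)$. I would test the equation with $\phi=\eta^{p}(M-u)\ge0$, which belongs to $W_0^{1,p}$ of a neighbourhood of $\operatorname{supp}\eta$. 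Since $f(u)\in L^\infty(\Tilde K)$, this is an admissible test function in Definition \ref{dfn solution}.

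\textbf{The energy identity.} Using $\langle \Fin{u},\nabla u\rangle=H^p(\nabla u)$ from Lemma \ref{lemma_properties of H}(4), the weak formulation $-\int\langle \Fin{u},\nabla\phi\rangle=\int f(u)\phi$ becomes
\begin{equation*}
\int\eta^pH^p(\nabla u)=\int f(u)(M-u)\eta^p+p\int\eta^{p-1}(M-u)\langle\Fin{u},\nabla\eta\rangle .
\end{equation*}
Since $0\le u\le M$ on $\Tilde K$ and $f$ is increasing, the first term on the right is at most $f(M)M|\Tilde K|$. For the second term, $|\nabla H|\le c$ and $\theta_1|\cdot|\le H$ give
\begin{equation*}
p\int\eta^{p-1}(M-u)\langle\Fin{u},\nabla\eta\rangle\le C\,\frac{M}{d}\,X^{(p-1)/p}|\Tilde K|^{1/p},
\qquad X:=\int\eta^pH^p(\nabla u),
\end{equation*}
by Hölder's inequality. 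Combining the two bounds yields $X\le A+BMX^{(p-1)/p}$ with $A=f(M)M|\Tilde K|$. Either $X\le 2A$, or $X\le 2BMX^{(p-1)/p}$, in which case $X\le(2BM)^p$. Since $\eta=1$ on $K$, in both cases
\begin{equation*}
\theta_1^p\|\nabla u\|^p_{L^p(K)}\le X\le C(p,\theta_1,\theta_2,K)\bigl(f(M)M+M^{p}\bigr).
\end{equation*}

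\textbf{Removing $M^{p-1}$ (the main obstacle).} The task is now to show $M^{p}\le C(K)M$, that is, $M^{p-1}\le C(K)$ with a constant depending only on $K$ (and on $f$, $p$, $H$ through the Keller--Osserman construction). Here I would invoke Proposition \ref{prop_weak_local_bound}. Every point of $\Tilde K$ is at Euclidean distance at least $d/2$ from $\partial\Omega$, so by the inclusion $\mathcal W_r(x)\subset B_{\theta_2 r}(x)$ the Wulff ball $\mathcal W_{R}(x)$ with $R:=d/(4\theta_2)$ satisfies $\mathcal W_R(x)\subset\subset\Omega$ for each $x\in\Tilde K$. Proposition \ref{prop_weak_local_bound} then gives $u(x)\le\omega_R(R/2)$, where $\omega_R$ is the one-dimensional large solution on $(0,2R)$ from Theorem \ref{thrm existence in one dim with osgood} or Theorem \ref{thrm existence without osgood}. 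Hence $M\le M_K:=\omega_R(R/2)$, a number depending only on $K$ through $d$, and therefore $M^{p}\le M_K^{p-1}M$.

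Substituting this gives $\|\nabla u\|^p_{L^p(K)}\le C\bigl(1+f(M)\bigr)M$ with $C=C(p,K,\theta_1,\theta_2)$, which is exactly the stated estimate.

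The two delicate points are the following. First, one must check that $\phi=\eta^p(M-u)$ is admissible; this holds because $u$ is locally $W^{1,p}$ and bounded on $\Tilde K$. Second, the constant $M_K$ depends on $f$ through $\omega$. I would state explicitly that $C$ depends on $K$ via $\operatorname{dist}(K,\partial\Omega)$ and the fixed data $f$, $p$, $H$.
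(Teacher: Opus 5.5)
Your proof is correct, and it differs from the paper's at the one step that matters.

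The paper tests with $u\phi^{p}$ and then uses Young's inequality with the fixed choice $\epsilon=\theta_1^{p}p/(2(p-1))$ to absorb $\int\phi^{p}|\nabla u|^{p}$ into the left-hand side. However, the term being split carries the prefactor $p\theta_2^{p-1}\|u\|_{L^\infty(\Tilde K)}$. Absorption therefore really requires $\epsilon$ of order $1/M$, and the leftover term is then of order $M^{p}$. That is exactly the $M^{p}$ your H\"older-plus-dichotomy step produces explicitly.

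Your additional step is what delivers the stated form: $M\le\omega_R(R/2)$ from Proposition~\ref{prop_weak_local_bound} with $R=d/(4\theta_2)$, hence $M^{p}\le\omega_R(R/2)^{p-1}M$. The cost is that your constant also depends on $f$ (through $\omega_R$) and on $\operatorname{dist}(K,\partial\Omega)$, not only on $p,\theta_1,\theta_2$. This is harmless wherever the estimate is used (Step~1 of the existence proof, Theorem~\ref{existence on annulus}, Lemma~\ref{lemma asym 1}), because $f$ is fixed and the local $L^\infty$ bound is invoked there anyway. You were right to state this dependence openly.

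A side remark: with the sign convention of Definition~\ref{dfn solution}, testing with $u\eta^{p}$ gives $\int\eta^{p}H^{p}(\nabla u)=-\int f(u)u\eta^{p}-p\int u\eta^{p-1}\langle H^{p-1}(\nabla u)\nabla H(\nabla u),\nabla\eta\rangle$. The $f$-term then has a favourable sign, so one gets $X\le CM^{p}$ directly, and your local-bound step turns this into $X\le CM$. Your test function $\eta^{p}(M-u)$ is what reintroduces the $f(M)M$ term, which the statement allows, so nothing is lost.
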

 \begin{proof}
     Let $\phi\in C_{c}^{\infty}(\Tilde{K})$, $0\leq\phi\leq 1$ and $\phi=1$ on $K$. Substituting $u\phi^{p}$ in the weak formulation and using Lemma \ref{lemma_properties of H}, we obtain
     \begin{equation*}
         \begin{split}
             \theta_{1}^{p}\int_{\Tilde{K}}|\nabla u|^{p}\phi^{p} &\leq \int_{\Tilde{K}}H^{p}(\nabla u) \phi^{p}\ dx=\int_{\Tilde{K}}f(u)u\phi^{p}\ dx-p\int_{\Tilde{K}}u\phi^{p-1}\left<\Fin{u}, \nabla \phi\right>\ dx\\
             &\leq f(\|u\|_{L^{\infty}(\Tilde{K})})\|u\|_{L^{\infty}(\Tilde{K})}|\Tilde{K}|+p\theta_{2}^{p-1}\|u\|_{L^{\infty}(\Tilde{K})} \int_{\Tilde{K}}\phi^{p-1}  |\nabla u|^{p-1}|\nabla\phi|\ dx.
         \end{split}
     \end{equation*}
     Now let $\epsilon>0$. Applying Young's inequality with the parameter $\epsilon$ for the second integral on the right side
     \begin{equation*}
         \int_{\Tilde{K}}\phi^{p-1}  |\nabla u|^{p-1}|\nabla\phi|\ dx\leq \frac{\epsilon(p-1)}{p}\int_{\Tilde{K}}\phi^{p}|\nabla u|^{p}\ dx +\frac{1}{p\epsilon^{p-1}}\int_{\Tilde{K}}|\nabla\phi|^{p}\ dx.
     \end{equation*}
     Choosing $\epsilon=\frac{\theta_{1}^{p}p}{2(p-1)}$, we obtain 
     \begin{equation*}
         \int_{K}|\nabla u|^{p}\ dx\leq C(p,K,\theta_{1},\theta_{2})\|u\|_{L^{\infty}(\Tilde{K})}\left(f(\|u\|_{L^{\infty}(\Tilde{K})})+1\right).\qedhere
     \end{equation*}
 \end{proof}

 Next, we address the existence:
 \begin{proof}[\textbf{Proof of Theorem \ref{main theorem existence} for $n\geq 2$}]
   The idea is to get the large solution as a limit of a sequence of solutions with finite boundary data.
   
  \textit{Step 1:} Let $u_{k}$ be the solution to \eqref{dirchlet problem} as in Definition \ref{dfn weak solution to dirchlet prob} with $g=k$. $\{u_{k}\}_{k}$ is an increasing sequence by Theorem \ref{comparison principle} and the sequence is locally uniformly bounded by Proposition \ref{prop_weak_local_bound}. For any $x\in\Omega$, define
   \begin{equation}\label{dfn of u}
       u(x):=\lim_{k\to\infty}u_{k}(x).
   \end{equation}
   By Proposition \ref{prop_weak_local_bound}, $u\in L_{loc}^{\infty}(\Omega)$. Then, by the dominated convergence theorem $u_{k}\to u$ in $L^{q}_{loc}(\Omega)$ for all $1\leq q<\infty$. Further, by Proposition \ref{prop_weak_local_bound}
 and Proposition \ref{prop_grad lp bound}, the sequence $\{u_{k}\}_{k}$ is bounded in $W^{1,p}_{loc}(\Omega)$ and hence by \eqref{dfn of u} and Sobolev embedding, $u_{n_{k}}\rightharpoonup u$ in $W_{loc}^{1,p}(\Omega)$. Thus $u\in W_{loc}^{1,p}(\Omega)$. 
 
 Next, we show that this candidate $u$ is the required solution.
 
\textit{Step 2:}
 The continuity of $f$ implies $f(u_{k}(x))\to f(u(x))$ for all $x\in\Omega$. The monotonicity of $f$ and the monotone convergence theorem imply $f(u_{k})\to f(u)$ in $L^{1}_{loc}(\Omega)$, thus $f(u)\in L^{1}_{loc}(\Omega)\cap L^{\infty}_{loc}(\Omega)$.  

 Let $\Omega'\subset \subset \Omega$. For any $\phi\in W_{0}^{1,p}(\Omega')$, by the dominated convergence theorem 
 \begin{equation*}
     \int_{\Omega'}f(u_{k})\phi\ dx\to \int_{\Omega'}f(u)\phi\ dx.
 \end{equation*}

\textit{Step 3:}
     For any $v\in W^{1,p}(\Omega')$, by Lemma \ref{lemma_properties of H} and the H\"older inequality, we obtain 
    \begin{equation*}
    \begin{split}
         \int_{\Omega'}\left<\Fin{u_{n_{k}}}, \nabla v\right> \ dx & \leq C \int_{\Omega'}H^{p-1}(\nabla u_{n_{k}}) |\nabla v| \ dx \\
         & \leq C\theta_{2}^{p-1}\int_{\Omega'} |\nabla u_{n_{k}}|^{p-1}|\nabla v|\ dx \\
         &\leq  C\theta_{2}^{p-1} \|\nabla u_{n_{k}}\|^{p-1}_{L^{p}(\Omega')} \|\nabla v\|_{L^{p}(\Omega')}.
    \end{split}
    \end{equation*}
    Therefore, by Propositions \ref{prop_grad lp bound} and Propositions \ref{prop_weak_local_bound}, the sequence $\mathcal{T}_{k}\in \left(W^{1,p}(\Omega')\right)'$ given by 
    \begin{equation*}
        \left<\mathcal{T}_{k}, v\right>:=\int_{\Omega'}\left<\Fin{u_{n_{k}}} , \nabla v\right>\ dx
    \end{equation*}
    is bounded and hence there exists a subsequence denoted by $u_{n_{k}}$ and a $\mathcal{T}\in \left(W^{1,p}(\Omega')\right)'$ such that 
    \begin{equation}\label{eqn weak convergence in dual}
        \int_{\Omega'} \left<\Fin{u_{n_{k}}}, \nabla v\right> \ dx=\left<\mathcal{T}_{n_{k}}, v\right> \to \left<\mathcal{T}, v\right> \quad \text{for all } v\in  W^{1,p}(\Omega').
    \end{equation}
    Moreover, for $\phi\in W_{0}^{1,p}(\Omega')$
    \begin{equation*}
        \int_{\Omega'} \left<\Fin{u_{n_{k}}}, \nabla \phi \right>\ dx= -\int_{\Omega'}f(u_{n_{k}})\phi \ dx \to -\int_{\Omega'}f(u)\phi \ dx,
    \end{equation*}
    as $n_{k}\to \infty$. Thus, 
    \begin{equation}\label{L equals to f}
        \left<\mathcal{T}, \phi\right>=-\int_{\Omega'}f(u)\phi \ dx \quad \text{for all } \phi\in W_{0}^{1,p}(\Omega').
    \end{equation}

    \textit{Step 4:}
    Finally, we will show that
    \begin{equation*}
        \left<\mathcal{T}, \phi\right>=\int_{\Omega'} \left<\Fin{u}, \nabla \phi \right> \ dx \quad \text{for all } \phi \in W_{0}^{1,p}(\Omega'),
    \end{equation*}
     using Browder-Minty type argument (cf. \cite{Zeidler1990,evans}).
    
    By (monotonicity) Lemma \ref{lemma monotone operator}
    \begin{equation*}
        \int_{\Omega'} \left<\Fin{u_{n_{k}}}-\Fin{v}, \nabla(u_{n_{k}}-v) \right> \ dx \geq 0, ~~ \text{for any } v \in W^{1,p}(\Omega').
   \end{equation*}
   Further, as $n_{k}\to \infty$, by $u_{n_{k}}\rightharpoonup u$, $\mathcal{T}_{n_{k}}\overset{*}\rightharpoonup \mathcal{T}$, \eqref{eqn weak convergence in dual} and a diagonal argument we obtain    
   \begin{equation*}
       \left<\mathcal{T}-\int_{\Omega'}\Fin{v}, u-v\right>\geq 0.
   \end{equation*}
   Now, for any $w\in W^{1,p}(\Omega')$ substitute $v=u-tw$, $t\in [0,1]$ to get 
   \begin{equation}\label{BM}
       \left<\mathcal{T}-\int_{\Omega'}\Fin{u-tw}, w\right>\geq 0.
   \end{equation}
   Also, by using $H\in C^{2}(\rn\setminus\{0\})$ when $\nabla u\neq 0$ and the 1-homogeneity of $H$ otherwise; the map $t\longmapsto \int_{\Omega'}\left<\Fin{u-tw}, \nabla w\right>$ is  continuous. For any $w\in W^{1,p}(\Omega')$, $t\to 0$ in \eqref{BM} implies 
   \begin{equation*}
       \left<\mathcal{T}-\int_{\Omega'}\Fin{u}, w\right>\geq 0.
   \end{equation*}
   Replacing $w$ with $-w$ in the above inequality together with \eqref{L equals to f} , we obtain 
   \begin{equation}
       \int_{\Omega'}\left<\Fin{u}, \nabla \phi \right>\ dx = -\int_{\Omega'}f(u)\phi \ dx \quad \text{for all } \phi\in W_{0}^{1,p}(\Omega').
   \end{equation}
   This establishes the result.
\end{proof}

\section{Boundary Asymptotics}\label{asym}
In this section, we prove Theorem \ref{dual asym} in two steps. One is Lemma \ref{lemma asym 1} and the other is Lemma \ref{lemma asym 2}. In order to prove Lemma \ref{lemma asym 2}, we are led to prove the existence as well as the boundary behaviour of a radially symmetric (with respect to the dual norm ${H_{0}}$) solution on an annulus defined with respect to $H_{0}$. Since Theorem \ref{dual asym} is proved for $n=1$ in Section \ref{1d}, we assume $n\geq 2$ throughout this section.

  Let $\Omega\subset\R^{n}$, be a bounded domain with $C^{2}$ boundary. Let $\delta_{H_{0}}(x)=\inf_{z\in \partial\Omega}H_{0}(x-z)$ denote the Minkowski distance from the boundary of $\Omega$. We first collect some of the properties of $\delta_{H_{0}}$ found in \cite{CrastaMalusa2007}. 

Since $\Omega$ is $C^{2}$, there exists $\mu>0$ such that $\delta_{H_{0}}$ is $C^{2}$ on $\Omega_{\mu}:=\{x\in\Omega~|~\delta_{H_{0}}(x)<\mu\}$, and moreover, for any $x\in \Omega_{\mu}$, there is a unique $z(x)\in \partial\Omega$ such that
 \begin{equation}\label{rep of x in Omega mu}
 \begin{split}
    &\delta_{H_{0}}(x)=H_{0}(x-z(x)) \quad\text{and}\\
   & x=z(x)+\delta_{H_{0}}(x)\nabla H(\nabla\delta_{H_{0}}(x)).
 \end{split}
       \end{equation} 
Further, there exists $R>0$, such that given any $z\in \partial\Omega$, there exists $x^{int}(z)\in \Omega$ and $x^{ext}(z)\in \R^{n}\setminus\Omega$, such that $\mathcal{W}_{R}(x^{int}(z))\cap \partial\Omega=\{z\}$ and $\mathcal{W}_{R}\left(x^{ext}(z)\right)\cap \partial\Omega=\{z\}$ (By \cite[Remark 4.2, Proposition 3.3 and also Proposition 4.6]{CrastaMalusa2007}. Moreover,
\begin{equation}\label{rep of center of ball}
    x^{int}=z+R\nabla H\left(\nabla \delta_{H_{0}}(x^{int})\right)\quad \text{and}\quad x^{ext}= z-R\nabla H\left(\nabla \delta_{H_{0}}(x^{ext})\right)
\end{equation}

Owing to Theorem \ref{theorem boy asylum 1d} and the construction of the super solution $v$ in Proposition \ref{prop_weak_local_bound}, we have the following result.
   \begin{lemma}\label{lemma asym 1}
       Let $\Omega$ be a bounded domain in $\rn$ which has the uniform interior ball condition with respect to $H_{0}$. Let $u$ be a solution of \eqref{finsler large eqn}.  Then 
       \begin{equation*}
           \liminf_{\delta_{H_{0}}(x)\to 0}\frac{\Psi_{H,p}\left(u(x)\right)}{\delta_{H_{0}}(x)}\geq 1.
       \end{equation*}
   \end{lemma}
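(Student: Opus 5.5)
The plan is to compare $u$ from above with the radial supersolution constructed in Proposition \ref{prop_weak_local_bound}, placed on interior Wulff balls tangent to $\partial\Omega$. The one-dimensional asymptotics of Theorem \ref{theorem boy asylum 1d} will then turn this into the lower bound on $\Psi_{H,p}(u)/\delta_{H_{0}}$. Note that $\Psi_{H,p}$ is decreasing, so an upper bound on $u$ becomes a lower bound on $\Psi_{H,p}(u)$.

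\emph{Setting up the barrier.} Fix $x\in\Omega$ with $\delta_{H_{0}}(x)$ small, and let $z=z(x)\in\partial\Omega$ be the nearest point in \eqref{rep of x in Omega mu}. By the uniform interior ball condition, $\mathcal{W}_{R}(x^{int}(z))\subset\Omega$, and by \eqref{rep of x in Omega mu}–\eqref{rep of center of ball} the point $x$ lies on the segment from $z$ to $x^{int}(z)$. Hence $R-H_{0}(x-x^{int})=\delta_{H_{0}}(x)$. For small $\eta>0$, consider the shrunken ball $\mathcal{W}_{R-\eta}(x^{int})$ and the supersolution
\[
v_\eta(y)=\omega_\eta\bigl(R-\eta-H_{0}(y-x^{int})\bigr),
\]
where $\omega_\eta$ is the one-dimensional large solution on $(0,2(R-\eta))$ of \eqref{eqn omega is a large solution}. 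This $\omega_\eta$ comes from Section \ref{1d} with $\gamma=1$, i.e. the equation $(|\omega'|^{p-2}\omega')'=f(\omega)$. The computation in Proposition \ref{prop_weak_local_bound} shows that $v_\eta$ is a weak supersolution that blows up on $\partial\mathcal{W}_{R-\eta}$, while $u$ is bounded there. Theorem \ref{comparison principle} therefore gives $u\le v_\eta$ on $\mathcal{W}_{R-\eta}(x^{int})$. Letting $\eta\to0$ and using continuous dependence of $\omega_\eta$ on the interval length (from the explicit formulas \eqref{eqn dfn of ell}), I get $u(x)\le\omega(\delta_{H_{0}}(x))$, where $\omega$ solves the problem on $(0,2R)$.

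\emph{Applying 1D asymptotics.} Since $\Psi_{H,p}$ is decreasing,
\[
\Psi_{H,p}(u(x))\ge\Psi_{H,p}\bigl(\omega(\delta_{H_{0}}(x))\bigr).
\]
By Theorem \ref{theorem boy asylum 1d} with $\gamma=1$, $\Psi_{H,p}(\omega(t))/t\to1$ as $t\to0$. Dividing by $\delta_{H_{0}}(x)$ and taking the liminf gives the claim. The constant $R$ is uniform in $z$, so these estimates are uniform as $\delta_{H_{0}}(x)\to0$.

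\emph{Main obstacle.} The delicate step is applying the comparison principle to an unbounded supersolution. Theorem \ref{comparison principle} needs $\limsup u/v_\eta\le1$ uniformly at $\partial\mathcal{W}_{R-\eta}$, which holds because $u$ is bounded on $\overline{\mathcal{W}_{R-\eta}}\subset\Omega$ and $v_\eta\to\infty$ there. One also has to check that $v_\eta\in W^{1,p}_{loc}\cap C$ and that the supersolution computation survives the singularity of $H_{0}$ at the center. I would handle the center as in Proposition \ref{prop_weak_local_bound}: there $\omega'$ vanishes at the midpoint $t=R-\eta$, and the radial integration by parts has no boundary term at the center.

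A secondary point is the geometric identification $R-H_{0}(x-x^{int})=\delta_{H_{0}}(x)$, i.e. that $x$, $z$ and $x^{int}$ are "$H_{0}$-collinear". I would take this from \eqref{rep of x in Omega mu} and \eqref{rep of center of ball}, which have the same direction $\nabla H(\nabla\delta_{H_{0}})$ along the normal ray from $z$. The inequality $H_{0}(x-x^{int})\le R-\delta_{H_{0}}(x)$ would in any case suffice, by monotonicity of $\omega$ on $(0,R)$.
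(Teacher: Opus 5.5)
\textbf{The step that fails} is the claim that $v_\eta$ is a weak supersolution. Put $r=H_{0}(y-x^{int})$ and $g(r)=\omega_\eta(R-\eta-r)$. Lemma \ref{H0}(5) gives the radial formula $\Delta_H^p v_\eta=(|g'|^{p-2}g')'+\frac{n-1}{r}|g'|^{p-2}g'$, the same one used in the proof of Theorem \ref{existence on annulus}. Since $v_\eta$ increases towards $\partial\mathcal{W}_{R-\eta}$, we have $g'=-\omega_\eta'(R-\eta-r)\ge 0$, and therefore
\[
\Delta_H^p v_\eta=f(v_\eta)+\frac{n-1}{r}\,\bigl|\omega_\eta'(R-\eta-r)\bigr|^{p-1}\ge f(v_\eta).
\]
So for $n\ge 2$ the barrier is a strict \emph{sub}solution, and Theorem \ref{comparison principle} cannot give $u\le v_\eta$.

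The computation in Proposition \ref{prop_weak_local_bound} that you invoke drops a minus sign between the second and third lines of \eqref{eqn_eta'}. With the sign restored, the extra term $(n-1)\phi(R-t)^{n-2}$ enters with the unfavourable sign. The paper's own proof of this lemma uses exactly this barrier (``as in the proof of Proposition \ref{prop_weak_local_bound}''), so you have followed it faithfully.

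The inequality $u\le v_\eta$ is in fact false in general. Suppose $\Omega=\mathcal{W}_R(x^{int})$ is itself a Wulff ball. The subsolutions $\omega_{-\eta}\bigl(R+\eta-H_0(\cdot-x^{int})\bigr)$ are bounded on $\overline{\Omega}$, so they lie below $u$ by Theorem \ref{comparison principle}. Letting $\eta\to 0$ on both sides would then force $u=\omega\bigl(R-H_0(\cdot-x^{int})\bigr)$, which by the display above is not a solution.

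\textbf{The missing idea} is that this drift term is of lower order and must be absorbed.
\begin{enumerate}
\item By \ref{KO} and monotonicity of $F$, $\frac{t}{2}F(t)^{-1/p}\le\int_{t/2}^{t}F(s)^{-1/p}\,ds\to 0$. Since also $f(t)\ge F(t)/t$, we get $F(t)^{(p-1)/p}/f(t)\le tF(t)^{-1/p}\to 0$.
\item Fix $\epsilon\in(0,1)$. Let $\omega$ be the one-dimensional large solution of $(|\omega'|^{p-2}\omega')'=(1-\epsilon)f(\omega)$ on a short interval $(0,\tau)$. By Step 2 of Theorem \ref{thrm existence in one dim with osgood}, its minimum tends to $\infty$ as $\tau\to 0$. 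Its first integral gives $|\omega'|^{p}\le\frac{p}{p-1}F(\omega)$.
\item Set $V(y)=\omega\bigl(R-\eta-H_0(y-x^{int})\bigr)$ on the Wulff annulus $\{R-\eta-\tau<H_0(y-x^{int})<R-\eta\}$, with $\eta+\tau\le R/2$ so that $r\ge R/2$ there.
\item On the inner half, where $\omega'\ge 0$, the drift is nonpositive. On the outer half, where $\omega'<0$, it is at most $\frac{2(n-1)}{R}\bigl(\frac{p}{p-1}\bigr)^{(p-1)/p}F(\omega)^{(p-1)/p}\le\epsilon f(\omega)$ once $\tau$ is small, uniformly in $z$. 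Hence $\Delta_H^pV\le f(V)$.
\item $V$ blows up on both boundary spheres while $u$ is bounded there. Theorem \ref{comparison principle} gives $u(x)\le\omega(\delta_{H_0}(x)-\eta)$ for $\eta<\delta_{H_0}(x)<\tau$. Because $\omega$ does not depend on $\eta$, you can let $\eta\to 0$ directly, with no continuous-dependence argument.
\item For the nonlinearity $(1-\epsilon)f$ the corresponding $\Psi$ equals $(1-\epsilon)^{-1/p}\Psi_{H,p}$. The one-dimensional asymptotics therefore give $\liminf\Psi_{H,p}(u(x))/\delta_{H_0}(x)\ge(1-\epsilon)^{1/p}$, and letting $\epsilon\to 0$ finishes the proof.
\end{enumerate}

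One smaller point: your fallback inequality $H_0(x-x^{int})\le R-\delta_{H_0}(x)$ is not automatic. The inclusion $\mathcal{W}_R(x^{int})\subset\Omega$ gives the reverse inequality, so you genuinely need the collinearity from \eqref{rep of x in Omega mu} and \eqref{rep of center of ball}.
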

   \begin{proof}
   Let $x\in \Omega_{\mu}$, then there exists $z(x)\in \partial\Omega$ such that \eqref{rep of x in Omega mu} holds. Denoting $z=z(x)$, let $\mathcal{W}_{R}(x^{int})$ be an interior $H_{0}$-ball associated with $z$ and let $\omega_{r}$ be the solution of \eqref{eqn omega is a large solution} on $(0,2(R-r))$ for $0<r<R$.
   
   Let $v_{r}(x)=\omega_{r}\left(R-r-H_{0}(x-x^{int})\right)$ be defined on $\mathcal{W}_{R-r}(x^{int})$. As in the proof of Proposition \ref{prop_weak_local_bound}, $u\leq v_{r}$ on $\mathcal{W}_{R-r}(x^{int})$. By Theorem \ref{comparison principle}, $\{\omega_{r}(x)\}_{r\leq r_{0}}$ is a decreasing sequence for a fixed $x\in \mathcal{W}_{R}(x^{int})$ and $r_{0}$ sufficiently small. Moreover, this sequence is pointwise bounded below by the solution of \eqref{eqn omega is a large solution} on $(0,2R)$. As $\omega_{r_{0}}$ is convex and hence $\omega_{r_{0}}$ is locally bounded, by the dominated convergence theorem, $\omega_{r}\to \omega:=\inf_{r}\omega_{r}$ in $L^{p}_{loc}(0,2R)$ for $r>r_{0}$. By Proposition \ref{prop_grad lp bound}, there exists a subsequence denoted by $\omega_{r}$ such that $\omega_{r}\rightharpoonup \omega$ in $W^{1,p}_{loc}(0,2R)$. As in the proof of Theorem \ref{main theorem existence}, one obtains that $\omega(x)$ is the solution of \eqref{eqn omega is a large solution} on $(0,2R)$.  
    
    Further, since $u\leq \omega(R-H_{0}(x-x^{int}))$ by the above discussion, and $\Psi_{H,p}$ is decreasing, one has $\Psi_{H,p}(u(x))\geq \Psi_{H,p}(\omega(R-H_{0}(x-x^{int})))$ for all $x\in \mathcal{W}_{R}(x^{int})$. By \eqref{rep of x in Omega mu} and \eqref{rep of center of ball}, we obtain
       \begin{equation*}
           \frac{\Psi_{H,p}\left(u(x)\right)}{\delta_{H_{0}}(x)}=\frac{\Psi_{H,p}\left(u(x)\right)}{H_{0}(x-z)}\geq \frac{\Psi_{H,p}\left(\omega\left(R-H_{0}(x-x^{int})\right)\right)}{H_{0}(x-z)}
           = \frac{\Psi_{H,p}\left(\omega\left(R-H_{0}(x-x^{int})\right)\right)}{R-H_{0}(x-x^{int})}.
            \end{equation*}
      Given $\epsilon>0$, by Theorem \ref{dual asym} for $n=1$ (proved in Section \ref{1d}), there exists $\delta>0$ such that for all $x\in \mathcal{W}_{R}(x^{int})$ with $R-H_{0}(x-x^{int})<\delta$,
      \begin{equation*}
        \frac{\Psi_{H,p}(\omega\left(R-H_{0}(x-x^{int}))\right)}{R-H_{0}(x-x^{int})}\geq 1-\epsilon . 
      \end{equation*}
 Thanks to the uniform ball condition. Let $\mu$ be as in \eqref{rep of x in Omega mu}, choosing $\delta\leq \mu$, for all $x\in \Omega_{\delta}:=\{x\in \Omega~|~\delta_{H_{0}}(x)<\delta\}$,  we infer 
\begin{equation*}
    \frac{\Psi_{H,p}\left(u(x)\right)}{\delta_{H_{0}}(x)}\geq 1-\epsilon.
\end{equation*}
      Implies that 
      \begin{equation*}
          \liminf_{\delta_{H_{0}}(x)\to 0} \frac{\Psi_{H,p}\left(u(x)\right)}{\delta_{H_{0}}(x)}\geq 1.\qedhere
      \end{equation*}
   \end{proof}

    To prove Theorem \ref{dual asym}, it remains to establish the reverse inequality involving the limit superior. To this end, we consider the annulus $A_{R_{1}}^{R_{2}}(x_{0})$ defined with respect to the dual norm $H_{0}$, centered at $x_{0}$, with inner radius $R_{1}$ and outer radius $R_{2}$. Take 
   \begin{equation}\label{eqn annulus}
       \begin{cases}
           \Delta_{H}^{p}u=f(u)\quad &\text{in }A_{R_{1}}^{R_{2}}(x_{0})\\
           u(x)\to \infty &\text{as } x\to R_{1}\\
           u(x)\to 0 &\text{as } x\to R_{2}.
       \end{cases}
   \end{equation}
   Next theorem deals with the existence of a radial solution to the above problem.
   \begin{theorem}\label{existence on annulus}
       The equation \eqref{eqn annulus} admits a non-negative radially symmetric solution in $W_{loc}^{1,p}(A_{R_{1}}^{R_{2}}(x_{0}))$.
   \end{theorem}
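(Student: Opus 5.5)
We look for the solution in the form $u(x)=w(H_{0}(x-x_{0}))$, with $w$ a function of one variable on $(R_{1},R_{2})$, and reduce \eqref{eqn annulus} to an ODE.

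\textbf{The radial ODE.} By Lemma \ref{H0}(4)--(5) and homogeneity, for $r=H_{0}(x-x_{0})$ we have $\nabla u=w'(r)\nabla H_{0}(x-x_{0})$. Hence
\begin{equation*}
H^{p-1}(\nabla u)\nabla H(\nabla u)=|w'(r)|^{p-2}w'(r)\,\frac{x-x_{0}}{r}.
\end{equation*}
Since $\operatorname{div}\big(x-x_{0}\big)=n$ and $\nabla r\cdot (x-x_{0})=r$ (Euler identity for $H_{0}$), the equation becomes
\begin{equation*}
\big(|w'|^{p-2}w'\big)'+\frac{n-1}{r}|w'|^{p-2}w'=f(w)\quad\text{on }(R_{1},R_{2}),
\end{equation*}
with boundary conditions $w(R_{1}^{+})=\infty$ and $w(R_{2}^{-})=0$. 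This is the same computation as in Proposition \ref{prop_weak_local_bound}, now carried out without discarding the $(n-1)$ term. Conversely, a $C^{1}$ solution $w$ of this ODE with $|w'|^{p-2}w'\in C^{1}$ gives a weak solution in the sense of Definition \ref{dfn solution}. This follows from the co-area change of variables used in \eqref{eqn_eta'}, integrating by parts in $t$ exactly as there but with equality throughout.

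\textbf{Construction by approximation.} Rather than shooting, we take the monotone limit scheme from the proof of Theorem \ref{main theorem existence}. For each $k$, let $u_{k}$ solve the Dirichlet problem on $A_{R_{1}}^{R_{2}}(x_{0})$ with data $k$ on $\{H_{0}=R_{1}\}$ and $0$ on $\{H_{0}=R_{2}\}$; the Appendix provides the finite-data solutions. Equivalently, $w_{k}$ solves the ODE boundary value problem on $[R_{1},R_{2}]$, which is obtained by minimising the one-dimensional weighted functional
\begin{equation*}
\int r^{n-1}\Big(\tfrac{1}{p}|w'|^{p}+F(w)\Big)\,dr.
\end{equation*}
Its minimiser is unique, so by rotational invariance of the problem in the $H_{0}$-sense the solution is radial.

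We then establish the following properties of $u_{k}$ in turn.
\begin{enumerate}
\item $0\le u_{k}$, with $f(0)=0$ and comparison against $0$.
\item $u_{k}$ is increasing in $k$, by Theorem \ref{comparison principle}.
\item $u_{k}$ is locally uniformly bounded in the open annulus, by Proposition \ref{prop_weak_local_bound} applied on Wulff balls $\mathcal{W}_{\rho}(y)$ inside the annulus. This is where \ref{KO} enters.
\item Near the outer boundary, $u_{k}$ is bounded uniformly in $k$.
\end{enumerate}

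\textbf{The outer boundary: the main obstacle.} We must show the limit still vanishes at $R_{2}$, i.e.\ a $k$-independent upper barrier near $\{H_{0}=R_{2}\}$. We would first try the radial supersolution $V(x)=\omega(H_{0}(x-x_{0})-R_{1}+\eta)$-type blow-up functions from Section \ref{1d}. These are large at $R_{1}$ but positive at $R_{2}$, so they are not enough alone. We therefore combine them with a barrier vanishing at $R_{2}$. Since $f\ge0$, any radial $\Delta_{H}^{p}$-harmonic function $h(r)$ is a supersolution of $\Delta^{p}_{H}u=f(u)$ (in the sign convention of Theorem \ref{comparison principle}). An explicit choice is
\begin{equation*}
h(r)=\int_{r}^{R_{2}}s^{-(n-1)/(p-1)}\,ds\cdot M,
\end{equation*}
which solves the radial ODE with right-hand side $0$ and vanishes at $R_{2}$.

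Let $\rho=(R_{1}+R_{2})/2$. On the sub-annulus $\rho<H_{0}<R_{2}$, comparing $u_{k}$ with $h$, with $M$ chosen so that $h(\rho)\ge\sup_{k}u_{k}$ on $\{H_{0}=\rho\}$ (finite by step 3), gives $u_{k}\le h$ uniformly in $k$. Hence the limit satisfies $u\to0$ at $R_{2}$.

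\textbf{Passing to the limit.} Define $u=\lim u_{k}$. Proposition \ref{prop_grad lp bound} gives $W^{1,p}_{loc}$ bounds, and the Browder--Minty argument of Steps 2--4 in the proof of Theorem \ref{main theorem existence} shows that $u$ is a weak solution. Radial symmetry and non-negativity pass to the pointwise limit. Finally, since $u_{k}=k$ on the inner sphere and $u_{k}\le u$, for any $M$ continuity of $u_{M}$ gives $u\ge u_{M}>M-1$ near $\{H_{0}=R_{1}\}$, so $u\to\infty$ as $H_{0}(x-x_{0})\to R_{1}$.
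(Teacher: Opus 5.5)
Your argument follows the paper's route. You use approximants with data $k$ on $\{H_0=R_1\}$ and $0$ on $\{H_0=R_2\}$, monotone in $k$ and locally bounded by Proposition~\ref{prop_weak_local_bound}. You then pass to the limit with Proposition~\ref{prop_grad lp bound} and the Browder--Minty argument of Theorem~\ref{main theorem existence}. The paper instead takes the approximants directly as $\tilde w_k(R_2-H_0(x))$, using an ODE existence result of Pucci--Serrin. It also never verifies the boundary behaviour of the limit. Your $\Delta_H^p$-harmonic barrier (indeed $r^{n-1}|h'|^{p-2}h'\equiv -M^{p-1}$) shows the limit vanishes on $\{H_0=R_2\}$, and your bound $u\ge u_M$ shows it blows up on $\{H_0=R_1\}$; these are exactly the checks the paper omits.

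Two steps need repair.

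First, ``by rotational invariance of the problem in the $H_0$-sense the solution is radial'' does not work for anisotropic $H$. The linear maps preserving $H_0$ form a compact group. Unless $H_0$ comes from an inner product, this group does not act transitively on the Wulff spheres $\{H_0=r\}$, so uniqueness only gives invariance under that group, not $H_0$-radiality. Argue instead as you half-indicate:
\begin{itemize}
\item the one-dimensional minimiser $w_k$ satisfies $(r^{n-1}|w_k'|^{p-2}w_k')'=r^{n-1}f(w_k)$ with $r^{n-1}|w_k'|^{p-2}w_k'\in C^1$;
\item by your divergence computation, $w_k(H_0(x-x_0))$ is then a weak solution of the annulus Dirichlet problem;
\item uniqueness for that problem (the Appendix argument) identifies it with $u_k$.
\end{itemize}
Alternatively, simply work with $w_k(H_0(x-x_0))$ from the start, as the paper does. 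This radial form is also what gives continuity of $u_M$ up to $\{H_0=R_1\}$ in your last step.

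Second, some of your comparisons involve two functions that both vanish on $\{H_0=R_2\}$: the monotonicity in $k$, and $u_k\le h$. There Theorem~\ref{comparison principle} does not literally apply, because its hypothesis $\limsup u/v\le 1$ is meaningless where $v=0$. Use the standard weak comparison instead: test with $(u_k-h)_+\in W_0^{1,p}(\{\rho<H_0(\cdot-x_0)<R_2\})$ and use Corollary~\ref{strict convexity of H} together with the monotonicity of $f$. In the radial setting this is just a one-dimensional comparison. The paper's proof has the same issue for monotonicity in $k$.
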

   \begin{proof}
       Without loss of generality, we assume that the annulus is centred at the origin. Let $v$ be the solution of \eqref{eqn annulus}, and suppose $v(x)=w(H_{0}(x))$ for some function $w:[0,\infty)\to [0,\infty)$. Then for any $\phi\in C_{c}^{\infty}(A_{R_{1}}^{R_{2}}(0))$ such that $\phi(x)=\psi(H_{0}(x))$,
       \begin{equation*}
           -\int_{A_{R_{1}}^{R_{2}}(0)} \left<|w'(H_{0}(x))|^{p-1}H^{p-1}(\nabla H_{0}(x))\frac{w'(H_{0}(x))}{|w'(H_{0}(x))|}\nabla H(\nabla H_{0}(x)),\psi' (H_{0}(x))\nabla H_{0}(x)\right>\ dx=\int_{A_{R_{1}}^{R_{2}}(0)}f(v)\phi \ dx.
       \end{equation*}
       Using Lemma \ref{lemma_properties of H}\ and \ref{H0},
       \begin{equation*}
                      -\int_{A_{R_{1}}^{R_{2}}(0)} |w'(H_{0}(x))|^{p-2} w'(H_{0}(x)) \psi' (H_{0}(x)) \ dx=\int_{A_{R_{1}}^{R_{2}}(0)}f(w(H_{0}(x))\psi(H_{0}(x)) \ dx.
       \end{equation*}
       Changing variables, we obtain 
       \begin{equation*}
           \int_{R_{1}}^{R_{2}}\int_{B_{t}(0)}|w'(t)|^{p-2}w'(t)\psi'(t) \frac{\ d\mathcal{H}(x)}{|\nabla H_{0}(x)|}\ dt=\int_{R_{1}}^{R_{2}}\int_{B_{t}(0)}f(w(t))\psi(t) \frac{\ d\mathcal{H}(x)}{|\nabla H_{0}(x)|}\ dt.
       \end{equation*}
       Which implies
       \begin{equation*}
           \int_{R_{1}}^{R_{2}} t^{n-1}|w'(t)|^{p-2}w'(t)\psi'(t) \ dt=\int_{R_{1}}^{R_{2}} t^{n-1}f(w(t))\psi(t) \ dt.
       \end{equation*}
       In other words, $w$ solves 
       \begin{equation}\label{eqn radial solution}
           \begin{cases}
               \left(t^{n-1}|w'(t)|^{p-2}w'(t)\right)'=t^{n-1}f(w(t)) \quad\text{ in } (R_{1},R_{2})\\
               w(t)\to \infty \text{ as }t\to R_{1} \text{ and }w(t)\to 0\text{ as }t\to R_{2}.
           \end{cases}
       \end{equation}
       To construct a solution to this, let $k\in \mathbb{N}$ and consider the problem
       \begin{equation*}
           \begin{cases}
               \left((R_{2}-t)^{n-1}|\tilde{w}'(t)|^{p-2}\tilde{w}'(t)\right)'=(R_{2}-t)^{n-1}f(\tilde{w}(t)) \quad\text{ in } (0,R_{2}-R_{1})\\
               \tilde{w}(0)=0 \text{ and }\tilde{w}(R_{2}-R_{1})=k,
           \end{cases}
       \end{equation*}
       which has a $C^{1}(0,R_{2}-R_{1})$ solution $\tilde{w}_{k}\geq 0$ by \cite[Proposition 4.2.1]{PucciSerrinbook2007}. Then $v_{k}(x)=w_{k}(H_{0}(x))=\tilde{w}_{k}(R_{2}-H_{0}(x))$ for $x\in A_{R_{1}}^{R_{2}}(0)$ is the solution of
       \begin{equation*}
           \begin{cases}
           \Delta_{H}^{p}v_{k}=f(v_{k})\quad &\text{in }A_{R_{1}}^{R_{2}}(0)\\
           v_{k}(x)\to k &\text{as } x\to R_{1}\\
           v_{k}(x)\to 0 &\text{as } x\to R_{2}.
       \end{cases}
       \end{equation*}
       The sequence $\{v_{k}\}_{k}$ is an increasing by Theorem \ref{comparison principle} and is locally bounded by Proposition \ref{prop_weak_local_bound}. Define $v(x):=\sup_{k}v_{k}(x)$. Then, since Proposition \ref{prop_grad lp bound} also holds for $v_{k}$, as in the proof of Theorem \ref{main theorem existence}, $v$ is the required non-negative radial solution.
   \end{proof}
   \begin{remark}
     Under the assumption \ref{i1} $\tilde{w}_{k}'>0$ by \cite[Proposition 4.2.2]{PucciSerrinbook2007}.  Since $w_{k}(t)=\tilde{w}_{k}(R_{2}-t)$ and $w(H_{0}(x))=v(x)\lim v_{k}(x)=\lim w_{k}(H_{0}(x))$, we obtain that $w$ is decreasing by taking the limit as $k\to \infty$ in $w_{k}(t_{1})\geq w_{k}(t_{2})$ whenever $t_{1}<t_{2}$.
   \end{remark}
   
   Next, we look at the asymptotic of the solution of \eqref{eqn annulus} as $t\to R_{1}$.
   \begin{proposition}\label{prop annuls asym}
       Assume \ref{KO} and \ref{i1}. Let $v\in W_{loc}^{1,p}(A_{R_{1}}^{R_{2}}(0))$ be the radially symmetric solution of \eqref{eqn annulus}. Then 
       \begin{equation*}
           \limsup_{|x|\to R_{1}}\frac{\Psi_{H,p}\left(v(x)\right)}{H_{0}(x)-R_{1}}\leq 1.
       \end{equation*}
   \end{proposition}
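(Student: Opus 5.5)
We work with the radial profile: by Theorem \ref{existence on annulus} and the remark following it, $v(x)=w(H_{0}(x))$ where $w$ solves \eqref{eqn radial solution}, is nonincreasing on $(R_{1},R_{2})$, and satisfies $w(t)\to\infty$ as $t\to R_{1}^{+}$. Since $H_{0}(x)-R_{1}=t-R_{1}$, the claim reduces to the one-dimensional statement
\begin{equation*}
\limsup_{t\to R_{1}^{+}}\frac{\Psi_{H,p}(w(t))}{t-R_{1}}\leq 1 .
\end{equation*}
The idea is to mimic the first integral used in Section \ref{1d}, treating the weight $t^{n-1}$ as a perturbation near $t=R_{1}>0$.

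\textbf{Step 1 (first integral).} Expanding the equation gives
\begin{equation*}
(|w'|^{p-2}w')'+\frac{n-1}{t}|w'|^{p-2}w'=f(w).
\end{equation*}
Since $w'\leq 0$, the second term on the left is $\geq 0$, so $(|w'|^{p-2}w')'\leq f(w)$. Multiply by $w'\leq0$, which reverses the inequality, and integrate from $t$ to some fixed $t_{1}\in(R_{1},R_{2})$. This gives
\begin{equation*}
\frac{p-1}{p}|w'(t)|^{p}\;\geq\; F(w(t))-F(w(t_{1}))+\frac{p-1}{p}|w'(t_{1})|^{p}\;\geq\; F(w(t))-F(w(t_{1})).
\end{equation*}
Hence
\begin{equation*}
-\Big(\frac{p-1}{p}\Big)^{1/p}\frac{w'(t)}{\{F(w(t))-F(w(t_{1}))\}^{1/p}}\geq 1 .
\end{equation*}
Integrating from $R_{1}$ to $t$ and substituting $s=w$ yields
\begin{equation*}
\Big(\frac{p-1}{p}\Big)^{1/p}\int_{w(t)}^{\infty}\frac{ds}{\{F(s)-F(w(t_{1}))\}^{1/p}}\geq t-R_{1}.
\end{equation*}
This is the wrong direction: it bounds $t-R_{1}$ from above, whereas we need $\Psi_{H,p}(w(t))\leq (1+\epsilon)(t-R_{1})$. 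So the damping term must be controlled from above rather than dropped.

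\textbf{Step 2 (controlling the damping term).} Integrating $(t^{n-1}|w'|^{p-2}w')'=t^{n-1}f(w)\geq0$ shows that $t^{n-1}|w'|^{p-1}$ is nonincreasing. Hence $|w'(s)|^{p-1}\leq (t/s)^{n-1}|w'(t)|^{p-1}$ for $s\geq t$. Now set $E(t)=\frac{p-1}{p}|w'(t)|^{p}-F(w(t))$. A direct computation gives
\begin{equation*}
E'(t)=\frac{n-1}{t}|w'|^{p}\geq 0 \quad\text{(up to sign conventions)}.
\end{equation*}
The plan is to use this to obtain
\begin{equation*}
\frac{p-1}{p}|w'(t)|^{p}\leq F(w(t))+C+\eta(t)\,|w'(t)|^{p},
\end{equation*}
with $\eta(t)=(n-1)(t-R_{1})/R_{1}\to0$. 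To get this, write $E(t_{1})-E(t)=\int_{t}^{t_{1}}\frac{n-1}{s}|w'|^{p}\,ds$ and split the integral. On $[t,t+\rho]$ the monotonicity bound controls it by $\frac{n-1}{R_{1}}\rho\,|w'(t)|^{p}$ up to constants; on the rest it is bounded by a constant $C_{\rho}$. This gives
\begin{equation*}
\Big(\frac{p-1}{p}-c\rho\Big)|w'(t)|^{p}\leq F(w(t))+C_{\rho}.
\end{equation*}

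\textbf{Step 3 (conclusion).} Use the inequality from Step 2 exactly as \eqref{eqn quotient} was used in the proof of the one-dimensional asymptotic proposition. Integrate $-w'/\{F(w)+C_{\rho}\}^{1/p}\leq(\frac{p}{p-1}-c'\rho)^{-1/p}$ from $R_{1}$ to $t$. Then apply the argument of \eqref{eqn double inequality}: since $F(s)\to\infty$, we have $F(s)+C_{\rho}\leq(1+\epsilon)F(s)$ for large $s$. This yields
\begin{equation*}
\Psi_{H,p}(w(t))\leq (1+\epsilon)^{1/p}(1+O(\rho))\,(t-R_{1})
\end{equation*}
for $t$ close to $R_{1}$. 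Letting $\epsilon,\rho\to0$ finishes the proof.

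\textbf{Main obstacle.} The delicate point is Step 2. The splitting must be done so that the $\rho$-neighbourhood term is genuinely absorbed by $|w'(t)|^{p}$ via the monotonicity of $t^{n-1}|w'|^{p-1}$, while the remainder stays bounded independently of $t$. If the energy splitting proves awkward, I would try a comparison alternative instead. Compare $v$, on a thin $H_{0}$-annulus $R_{1}<H_{0}<R_{1}+\rho$, with the one-dimensional large solution of Section \ref{1d} rescaled by $(1+c\rho)$ to absorb the damping term. One would then check that it is a subsolution and apply Theorem \ref{comparison principle}.
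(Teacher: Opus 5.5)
Your argument works, but only after you fix the sign of the damping term, which is backwards in Steps 1--2. Since $w'\le 0$, the term $\frac{n-1}{t}|w'|^{p-2}w'=-\frac{n-1}{t}|w'|^{p-1}$ is $\le 0$, so $(|w'|^{p-2}w')'\ge f(w)$. For $E=\frac{p-1}{p}|w'|^{p}-F(w)$ this gives $E'(t)=-\frac{n-1}{t}|w'(t)|^{p}\le 0$, i.e.
\begin{equation*}
\frac{p-1}{p}|w'(t)|^{p}=F(w(t))+E(t_{1})+\int_{t}^{t_{1}}\frac{n-1}{s}|w'(s)|^{p}\,ds,\qquad R_{1}<t<t_{1}.
\end{equation*}
The inequality displayed at the end of your Step 1 is the correct one, but it does not follow from the premise you state. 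With the sign you use in Step 2 ($E'\ge 0$, $E(t_1)-E(t)=\int_t^{t_1}\cdots$), the upper bound $\frac{p-1}{p}|w'(t)|^p\le F(w(t))+E(t_1)$ would be immediate and the splitting pointless. It is the plus sign above that forces an absorption argument.

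Your splitting handles that plus sign correctly. For $s\ge t$, flux monotonicity $s^{n-1}|w'(s)|^{p-1}\le t^{n-1}|w'(t)|^{p-1}$ gives $\int_t^{t+\rho}\le \frac{(n-1)\rho}{R_1}|w'(t)|^p$. So $\eta$ is the constant $(n-1)\rho/R_1$, not $(n-1)(t-R_1)/R_1$. The remaining piece satisfies $\int_{t+\rho}^{t_1}\le\int_{R_1+\rho}^{t_1}\frac{n-1}{s}|w'|^p\,ds$, which is independent of $t$. In Step 3 the constant should read $(\frac{p-1}{p}-c\rho)^{-1/p}$, so that multiplying by $(\frac{p-1}{p})^{1/p}$ gives $1+O(\rho)$.

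With these corrections, your route differs from the paper's. The paper never isolates the damping term. It multiplies the equation by $\frac{p}{p-1}(-w')t^{a}$ with $a=\frac{p(n-1)}{p-1}$, which gives the exact identity
\begin{equation*}
(t^{a}|w'|^{p})'=\frac{p}{p-1}t^{a}\frac{d}{dt}F(w(t)).
\end{equation*}
The first-order term thus becomes the weight $(s/t)^{a}\in[1,(t_0/R_1)^{a}]$ in a Stieltjes integral against $-dF(w)\ge 0$. One then reads off $|w'(t)|^{p}\le (t_0/R_1)^{a}\big(|w'(t_0)|^{p}+\frac{p}{p-1}F(w(t))\big)$ with no absorption, and choosing $t_0$ close to $R_1$, then $t$ closer still, finishes the proof.

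Both arguments rest on the same fact: the weight $t^{n-1}$ is almost constant near $R_1>0$. The integrating factor is shorter and needs no control of $|w'(s)|$ for $s>t$. Your version instead uses the unweighted energy, the monotone flux, and a two-parameter absorption ($\rho$ first, then $t$). The comparison fallback you mention is not needed.
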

\begin{proof}
The idea of the proof is as follows: We consider $\Psi_{H,p}$ with $r=w(t):=v(H_{0}(x))$ and apply a change of variable to obtain $\int_{t}^{\infty}w'(s)/\{F(w(s)\}^{1/p}~ds$. Using the equation and the symmetry, one can compare the quantities $w'$ and $F(w)$ present in the integral, to obtain the desired inequality.

Let $v(x)=w(H_{0}(x))$. Then $w$ solves \eqref{eqn radial solution}, or equivalently 
\begin{equation*}
    \left(|w'(t)|^{p-2}w'(t)\right)'+\frac{(n-1)}{t}|w'(t)|^{p-2}w'(t)=f(w(t)).
\end{equation*}
Since $w'<0$, the above equation can be written as
\begin{equation*}
    \left(-(-w'(t))^{p-1}\right)'-\frac{(n-1)}{t}(-w'(t))^{p-1}=f(w(t)).
\end{equation*}
Which implies
\begin{equation*}
    (p-1)(-w'(t))^{p-2}w''(t)-\frac{(n-1)}{t}(-w'(t))^{p-1}=f(w(t)).
\end{equation*}
    Multiplying by $\frac{p}{p-1}(-w'(t))t^{\frac{p(n-1)}{p-1}}$, we obtain 
    \begin{equation*}
        \left(t^{\frac{p(n-1)}{p-1}}|w'(t)|^{p}\right)'=\frac{p}{p-1}t^{\frac{p(n-1)}{p-1}}w'(t)f(w(t)).
    \end{equation*}
    Let $R_{1}<t<t_{0}<R_{2}$. Integrating the above equation from $t$ to $t_{0}$,
    \begin{equation*}
        t_{0}^{\frac{p(n-1)}{p-1}}|w'(t_{0})|^{p}-t^{\frac{p(n-1)}{p-1}}|w'(t)|^{p}=\frac{p}{p-1}\int_{t}^{t_{0}}s^{\frac{p(n-1)}{p-1}}dF(w(s)).
    \end{equation*}
    By adding and subtracting $\frac{p}{p-1}t^{\frac{p(n-1)}{p-1}}\int_{t}^{t_{0}}dF(w(s))$,
    we obtain
    \begin{equation*}
        \begin{split}
            |w'(t)|^{p}=&\left(\frac{t_{0}}{t}\right)^{\frac{p(n-1)}{p-1}}|w'(t_{0})|^{p}-\frac{p}{p-1}\left\{F(w(t_{0}))-F(w(t))\right\}-\frac{p}{p-1}\int_{t}^{t_{0}}\bigg\{\left(\frac{s}{t}\right)^{\frac{p(n-1)}{p-1}}-1\bigg\}dF(w(s))\\
            &\leq \left(\frac{t_{0}}{t}\right)^{\frac{p(n-1)}{p-1}}|w'(t_{0})|^{p}+\frac{p}{p-1}F(w(t))+\frac{p}{p-1}\int_{t_{0}}^{t}\bigg|\left(\frac{s}{t}\right)^{\frac{p(n-1)}{p-1}}-1\bigg|dF(w(s))\\
            &\leq \left(\frac{t_{0}}{R_{1}}\right)^{\frac{p(n-1)}{p-1}}|w'(t_{0})|^{p}+\frac{p}{p-1}F(w(t))+\frac{p}{p-1}\bigg(\left(\frac{t_{0}}{R_{1}}\right)^{\frac{p(n-1)}{p-1}}-1\bigg)\left\{F(w(t))-F(w(t_{0}))\right\}\\
            &\leq \left(\frac{t_{0}}{R_{1}}\right)^{\frac{p(n-1)}{p-1}}|w'(t_{0})|^{p}+\frac{p}{p-1}F(w(t))+\frac{p}{p-1}\bigg(\left(\frac{t_{0}}{R_{1}}\right)^{\frac{p(n-1)}{p-1}}-1\bigg)F(w(t)).
            \end{split}
    \end{equation*}
    Given any $\epsilon>0$, choose $R_{1}<t_{0}<R_{2}$ such that for all $R_{1}<t\leq t_{0}$
    \begin{equation*}
        \left(\frac{t_{0}}{R_{1}}\right)^{\frac{p(n-1)}{p-1}}|w'(t_{0})|^{p}\leq  \frac{\epsilon p}{2(p-1)}F(w(t)), \quad \text{and}\quad\left(\frac{t_{0}}{R_{1}}\right)^{\frac{p(n-1)}{p-1}}-1<\frac{\epsilon}{2}.
    \end{equation*}
    Here, the first one holds as $F(w(t))\to +\infty$ as $t\to R_{1}$.

    Therefore we have 
    \begin{equation*}
        -w'(t)\leq (1+\epsilon)^{1/p} \left(\frac{p}{p-1}F(w(t))\right)^{1/p}.
    \end{equation*}
    Finally,
    \begin{equation*}
    \begin{split}
         \Psi_{H,p}(w(t))&=\left(\frac{p-1}{p}\right)^{1/p}\int_{w(t)}^{\infty}\frac{ds}{\left\{F(s)\right\}^{1/p}}\\
         &=\left(\frac{p-1}{p}\right)^{1/p}\int^{t}_{R_{1}} \frac{-w'(s)}{\{F(w(s))\}^{1/p}}\ ds\\
         &\leq (1+\epsilon)^{1/p}(t-R_{1}).
         \end{split}
    \end{equation*}
    From this, we obtain
    \begin{equation*}
            \Psi_{H,p}(v(x))=\Psi_{H,p}(w(H_{0}(x)))\leq (1+\epsilon)^{1/p}(H_{0}(x)-R_{1})
    \end{equation*}
    This gives 
    \begin{equation*}
        \limsup_{|x|\to R_{1}}\frac{\Psi_{H,p}(v(x))}{{H_{0}}(x)-R_{1}}\leq 1.\qedhere
    \end{equation*}
\end{proof}

  \begin{lemma}\label{lemma asym 2}
       Let $\Omega$ be a bounded domain in $\rn$ which has a uniform exterior ball condition with respect to the dual norm $H_{0}$. Let $u$ be a solution of \eqref{finsler large eqn}. Then  
       \begin{equation*}
           \limsup_{\delta_{H_{0}}(x)\to 0}\frac{\Psi_{H,p}\left(u(x)\right)}{\delta_{H_{0}}(x)}\leq 1.
       \end{equation*}
       
   \end{lemma}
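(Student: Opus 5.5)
The plan is to compare $u$ from below with the radial annulus solution of Theorem \ref{existence on annulus}, placed on an exterior Wulff ball, and then transfer the upper bound of Proposition \ref{prop annuls asym}.

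Fix $x\in\Omega_{\mu}$ and let $z=z(x)\in\partial\Omega$ be the unique nearest point given by \eqref{rep of x in Omega mu}. Let $\mathcal{W}_{R}(x^{ext})$ be the exterior $H_{0}$-ball at $z$, so that $\mathcal{W}_{R}(x^{ext})\cap\overline{\Omega}=\{z\}$ up to the boundary point. Choose $R_{2}>R+\operatorname{diam}_{H_{0}}(\Omega)$, so that $\Omega\subset A_{R}^{R_{2}}(x^{ext})$. Let $v$ be the radial solution of \eqref{eqn annulus} on $A_{R}^{R_{2}}(x^{ext})$ from Theorem \ref{existence on annulus}, written $v(y)=w(H_{0}(y-x^{ext}))$.

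The first step is to show $v\le u$ in $\Omega$ via Theorem \ref{comparison principle}, with $v$ in the role of the smaller function. The boundary condition \eqref{boundary condition for comp princ} requires $\limsup v/u\le 1$ at $\partial\Omega$. This holds because $u\to\infty$ on $\partial\Omega$, while $v$ is bounded on $\partial\Omega\setminus\{z\}$ away from $z$ and $v$ is continuous up to the inner sphere except near $z$.

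The main obstacle is the point $z$ itself, where both $u$ and $v$ blow up. To handle it, I would shrink the ball. Replace $R$ by $R-r$ and use the annulus $A_{R-r}^{R_{2}}(x^{ext})$, whose inner sphere lies strictly outside $\overline{\Omega}$. The corresponding solution $v_{r}$ is then bounded on $\overline\Omega$, so $\limsup v_{r}/u=0$ on $\partial\Omega$ and comparison gives $v_{r}\le u$. Letting $r\to0$, I expect $v_{r}\to v$ pointwise. This can be argued by monotonicity in $r$ together with the same compactness/Browder--Minty argument used in Lemma \ref{lemma asym 1}, or one can simply apply Proposition \ref{prop annuls asym} to $v_{r}$ and keep track of the errors uniformly. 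More simply, one can avoid the limit by taking the annulus centered at $x^{ext}$ with inner radius $R-r$ and letting $r\to0$ only at the end of the estimate.

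The second step is the estimate itself. Since $\Psi_{H,p}$ is decreasing and $u\ge v$, we get $\Psi_{H,p}(u(x))\le\Psi_{H,p}(v(x))$. By \eqref{rep of x in Omega mu} and \eqref{rep of center of ball}, the points $x^{ext}$, $z$, $x$ lie on a line in the direction $\nabla H(\nabla\delta_{H_{0}})$. This gives $H_{0}(x-x^{ext})-R=\delta_{H_{0}}(x)$, using the homogeneity of $H_{0}$ and the identity $H_{0}(\nabla H(\cdot))=1$ from Lemma \ref{H0}. Hence
\begin{equation*}
\frac{\Psi_{H,p}(u(x))}{\delta_{H_{0}}(x)}\le \frac{\Psi_{H,p}(v(x))}{H_{0}(x-x^{ext})-R}.
\end{equation*}
By Proposition \ref{prop annuls asym}, the right-hand side is at most $1+\epsilon$ once $H_{0}(x-x^{ext})-R<\delta$.

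The final point is uniformity of $\delta$ in $z$. Every annulus used has the same radii $R$ and $R_{2}$ and differs only by a translation, so $w$ is the same function for all $z$ and $\delta$ depends only on $\epsilon$. Taking $\delta\le\mu$ and letting $\delta_{H_{0}}(x)\to0$ and then $\epsilon\to0$ gives the claim.

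Proposition \ref{prop annuls asym} assumes \ref{i1}. Under \ref{i2}, I would instead replace the zero outer data by a small positive constant, or note that the proof of that proposition only used $w'<0$ near $R_{1}$.
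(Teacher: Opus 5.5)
Your proposal is correct and is essentially the paper's proof. Both compare $u$ from below with the radial solutions $v_r$ on the shrunken exterior annuli centred at $x^{ext}$, pass to the monotone limit $r\to 0$, and combine the collinearity identity $H_{0}(x-x^{ext})-R=\delta_{H_{0}}(x)$ with Proposition \ref{prop annuls asym}.

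Your choice $R_{2}>R+\operatorname{diam}_{H_{0}}(\Omega)$ lets you apply Theorem \ref{comparison principle} on $\Omega$ itself rather than on $\Omega\cap A_{R-r}^{2R}(x^{ext})$. That choice, together with your explicit remarks on uniformity in $z$ and on not needing \ref{i1}, are small improvements. Your ``avoid the limit'' variant is not actually simpler, though: with a fixed centre the profile $w_r$ changes with $r$, so the threshold in Proposition \ref{prop annuls asym} would have to be shown uniform in $r$.
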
 
   \begin{proof}

   Let $x\in \Omega_{\mu}$, then there exists $z(x)\in \partial\Omega$ such that \eqref{rep of x in Omega mu} holds. Denoting $z=z(x)$, let $\mathcal{W}_{R}(x^{ext})$ be the exterior $H_{0}-$ball associated with $z$. 
   
   Consider the solution $v_{r}$ of \eqref{eqn annulus} on $A_{R-r}^{2R}(x^{ext})$ for some $0<r<R$. Denote $\Omega_{r}'=\Omega\cap A_{R-r}^{2R}(x^{ext})$. Both $u$ and $v_{r}$ solve $\Delta_{H}^{p}w=f(w)$ in $\Omega_{r}'$. Since $u\geq 0, v_{r}=0$ on $\partial\Omega_{r}'\cap\partial A_{R-r}^{2R}(x^{ext})$ and $v_{r}<\infty$ on $\partial\Omega_{r}'\cap\partial\Omega$, by the comparison principle $v_{r}\leq u$ on $\Omega'_{r}$. 
   
   Further, for $r_{1}\leq r_{2}$ we have $v_{r_{1}}(x)\geq v_{r_{2}}(x)$ for all $x\in A_{R-r_{1}}^{2R}(x^{ext})$ by the comparison principle. Then $v=\sup v_{r}\leq u$ on $\Omega'=\Omega\cap A^{2R}_{R}(x^{ext})$. Moreover, employing the arguments analogous to the proof of Theorem \ref{main theorem existence}, $v$ is a solution of \eqref{eqn annulus} on $A_{R}^{2R}(x^{ext})$. We also refer to \cite[Theorem 4.3]{DattaIndro2026} for the use of the monotonicity method in the case when the sequence of solutions, defined in nested domains, blows up at the boundary. 
    
       Thus, by \eqref{rep of x in Omega mu} and \eqref{rep of center of ball}, since $\Psi_{H,p}$ is decreasing, we obtain
       \begin{equation*}
           \frac{\Psi_{H,p}\left(u(x)\right)}{\delta_{H_{0}}(x)}=\frac{\Psi_{H,p}\left(u(x)\right)}{H_{0}(x-z)}\leq \frac{\Psi_{H,p}\left(v(x)\right)}{H_{0}(x-x^{ext})-R}
       \end{equation*}
       Now, given $\epsilon>0$, by Proposition \ref{prop annuls asym}, there exists $\delta>0$ such that whenever $H_{0}(x-x^{ext})-R\leq \delta$, one has
       \begin{equation*}
           \frac{\Psi_{H,p}\left(v(x)\right)}{H_{0}(x-x^{ext})-R}< 1+\epsilon.
       \end{equation*}
       Choosing $\delta<\mu$, where $\mu$ is defined before \eqref{rep of x in Omega mu}, we obtain for all $x\in \Omega_{\delta}$ that 
       \begin{equation*}
        \frac{\Psi_{H,p}\left(u(x)\right)}{\delta_{H_{0}}(x)}< 1+\epsilon
       \end{equation*}
       Which completes the proof.
   \end{proof}

Combining the above results, we obtain the following proof of Theorem \ref{dual asym}.
    \begin{proof}[\textbf{Proof of Theorem \ref{dual asym}}]
        Since $\Omega$ is $C^{2}$, it has a uniform interior and exterior ball condition with respect to $H_{0}$. Applying Lemma \ref{lemma asym 1} and Lemma \ref{lemma asym 2}, we get 
        \begin{equation*}
            1\leq \liminf_{\delta_{H_{0}}(x)\to 0}\frac{\Psi_{H,p}\left(u(x)\right)}{\delta_{H_{0}}(x)}\leq \lim_{\delta_{H_{0}}(x)\to 0}\frac{\Psi_{H,p}\left(u(x)\right)}{\delta_{H_{0}}(x)}\leq  \limsup_{\delta_{H_{0}}(x)\to 0}\frac{\Psi_{H,p}\left(u(x)\right)}{\delta_{H_{0}}(x)}\leq 1.\qedhere
        \end{equation*}
    \end{proof}

    \section{Uniqueness}\label{uniqueness}
    In this section, we assume that $f(t)=t^{q}$ and establish the uniqueness using the asymptotics presented in the previous section.

    Let $c(p,q)$ denote a constant whose value may change from line to line. From Example \ref{example f=tq}, we obtain
    \begin{equation*}
        \Phi_{H,p}(s):=\Psi^{-1}_{H,p}(s)=\frac{c(p,q)}{s^{\frac{p}{q+1-p}}}.
    \end{equation*}
   
    \begin{proof}[\textbf{Proof of Corollary  \ref{thrm asym for f=tq}}]
     Given $\alpha>0$,  by Theorem \ref{dual asym}, there exists $\delta>0$ such that for all $x\in \Omega_{\delta}$ one has 
    \begin{equation*}
        (1-\alpha)\delta_{H_{0}}(x)<\Psi_{H,p}\left(u(x)\right)<(1+\alpha)\delta_{H_{0}}(x).
    \end{equation*}
    Since $\Psi_{H,p}$ is decreasing , so is $\Phi_{H,p}$. This implies 
    \begin{equation*}
        (1+\alpha)^{\frac{-p}{q+1-p}}\Phi(\delta_{H_{0}}(x))<u(x)<(1-\alpha)^{\frac{-p}{q+1-p}}\Phi(\delta_{H_{0}}(x)).
    \end{equation*}
    Rewriting 
    \begin{equation*}
        (1+\alpha)^{\frac{-p}{q+1-p}}<\frac{u(x)}{\Phi(\delta_{H_{0}}(x))}<(1-\alpha)^{\frac{-p}{q+1-p}}.
    \end{equation*}
    Since $1-(1+\alpha)^{\frac{-p}{q+1-p}}\to 0$ and $(1-\alpha)^{\frac{-p}{q+1-p}}-1\to 0$ as $\alpha\to 0$, we infer \eqref{asym f=tq}.
    \end{proof}
   
   \begin{proof}[\textbf{Proof of Theorem \ref{thrm uniqueness}}]
       Let $u$ and $v$ be two solutions of \eqref{finsler large eqn} with $f(t)=t^{q}$, $q>p-1$. Then, 
    \begin{equation*}
        \lim_{\delta_{H_{0}}(x)\to 0}\frac{u(x)}{v(x)}=\lim_{\delta_{H_{0}}(x)\to 0}\frac{u(x)}{\Psi_{H,p}(\delta_{H_{0}}(x))}~\frac{\Psi_{H,p}(\delta_{H_{0}}(x))}{v(x)}=1.
    \end{equation*}
    Therefore, by Theorem \ref{comparison principle}, $u\leq v$ in $\Omega$. Interchanging the roles of $u$ and $v$, we obtain uniqueness.
   \end{proof} 
    
    \section{Appendix}
This appendix gathers basic existence results about the Finsler p-Laplacian, followed by a brief discussion about the regularity.

Let $\Omega$ be a bounded domain in $\rn$, consider the energy 
 \begin{equation*}
    J(u)=\int_{\Omega}\frac{1}{p}H^{p}\left(\D u\right)+F(u) \ dx, \quad \text{for all}\ u\in W^{1,p}(\Omega).
\end{equation*}
of 
\begin{equation}\label{dirchlet problem}
     \begin{cases}
        -\Delta_{H}^{p}u+f(u)=0\quad &\text{in }\Omega\\
        u=g &\text{on } \partial\Omega,
    \end{cases}
\end{equation}
for some $g\in W^{1,p}(\Omega)$ and $F$ is the primitive of $f$ as in \eqref{eqn primitive of f}. The positivity of $f$ plays a role in the existence of a solution, as given below. 
\begin{definition}\label{dfn weak solution to dirchlet prob}
    The function $u\in W^{1,p}(\Omega)$ is a weak solution of \eqref{dirchlet problem} if
    \begin{equation*}
        -\int_{\Omega}\left<\Fin{u}, \D \phi\right>\ dx=\int_{\Omega}f(u)\phi\ dx \quad \text{for all }\phi\in W_{0}^{1,p}(\Omega). 
    \end{equation*}
    and $u-g\in W_{0}^{1,p}(\Omega)$ with $f(u)\in L^{p'}(\Omega)$.
\end{definition}
\begin{remark}
    In the above definition, $u$ also satisfies the weak formulation for all $\Omega'\subset\subset\Omega$ with the test function $\phi\in W^{1,p}(\Omega')$
\end{remark}
Clearly, the minimiser of the functional $J$ on $X:=\{u~|~u-g\in W_{0}^{1,p}(\Omega) \text{ and }F(u)\in L^{1}(\Omega)\}$ is the solution of \eqref{dirchlet problem}. Next, we show that the minimisers exist:
\begin{lemma}
    The functional $J$ is coercive, bounded below and weakly lower semi-continuous.
\end{lemma}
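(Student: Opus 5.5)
Because $J(u)=\int_{\Omega}\tfrac1p H^{p}(\nabla u)+F(u)\,dx$ consists of a convex gradient term and a nonnegative zeroth-order term, I will prove the three properties separately on the admissible class $X$. Throughout I use Lemma \ref{lemma_properties of H} to compare $H^{p}(\nabla u)$ with $|\nabla u|^{p}$. By \ref{a1}, $f\ge 0$, so the primitive $F$ from \eqref{eqn primitive of f} satisfies $F\ge 0$ on $[0,\infty)$. Extending $f$ by an odd or zero extension to negative values, or restricting to nonnegative competitors via truncation, we also get $F\ge0$ on $\R$.

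\textbf{Bounded below.} This is immediate from $F\ge 0$, which gives $J(u)\ge \frac{\theta_{1}^{p}}{p}\int_\Omega|\nabla u|^{p}\,dx\ge 0$.

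\textbf{Coercivity.} Write $u=(u-g)+g$ with $u-g\in W^{1,p}_0(\Omega)$. Poincar\'e's inequality on the bounded domain $\Omega$ gives $\|u-g\|_{W^{1,p}}\le C\|\nabla(u-g)\|_{L^p}$. Combining this with the triangle inequality yields
\begin{equation*}
\|u\|_{W^{1,p}}\le C\big(\|\nabla u\|_{L^{p}}+\|g\|_{W^{1,p}}\big)\le C\big((pJ(u))^{1/p}\theta_{1}^{-1}+\|g\|_{W^{1,p}}\big).
\end{equation*}
Hence $J(u)\to\infty$ whenever $\|u\|_{W^{1,p}}\to\infty$ within $X$.

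\textbf{Weak lower semicontinuity.} Take $u_k\rightharpoonup u$ in $W^{1,p}(\Omega)$ with $u_k\in X$. The set $g+W_0^{1,p}(\Omega)$ is convex and closed, hence weakly closed, so $u-g\in W_0^{1,p}(\Omega)$. By Rellich, after passing to a subsequence realizing the $\liminf$, we have $u_k\to u$ in $L^{p}$ and a.e.
\begin{itemize}
\item For the zeroth-order part, $F$ is continuous and nonnegative, so Fatou's lemma gives $\int F(u)\le\liminf\int F(u_k)$. In particular $F(u)\in L^{1}$ whenever the $\liminf$ is finite, so $u\in X$.
\item For the gradient part, $\xi\mapsto \frac1p H^{p}(\xi)$ is convex (as already used in the proof of \eqref{eqn improved hyperplane}) and continuous. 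Therefore $v\mapsto\int\frac1pH^{p}(\nabla v)$ is a convex, strongly continuous functional on $W^{1,p}$; strong continuity follows from $H^{p}(\xi)\le\theta_{2}^{p}|\xi|^{p}$ and dominated convergence along a.e.-convergent subsequences. By Mazur's lemma, such a functional is weakly lower semicontinuous.
\end{itemize}
Alternatively, use the supporting-hyperplane inequality \eqref{eqn hyperplane} for $H^{p}$:
\begin{equation*}
H^{p}(\nabla u_k)\ge H^{p}(\nabla u)+\langle \nabla H^{p}(\nabla u),\nabla u_k-\nabla u\rangle .
\end{equation*}
Integrate this and note that $\nabla H^{p}(\nabla u)=pH^{p-1}(\nabla u)\nabla H(\nabla u)\in L^{p'}$ by Lemma \ref{lemma_properties of H}(3). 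The linear term then tends to zero by weak convergence.

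\textbf{Main obstacle.} The delicate point is the zeroth-order term: $F$ may grow faster than any power, so $F(u)$ is not controlled by the $W^{1,p}$ norm. This is why only lower semicontinuity (via Fatou, using $F\ge0$) is claimed, and why finiteness of $\int F(u)$ is built into the definition of $X$. It is also why I pass to an a.e.-convergent subsequence rather than relying on a growth condition on $f$.
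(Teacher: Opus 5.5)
Your proof is correct and follows essentially the same route as the paper. You get the lower bound $J(u)\ge \frac{\theta_1^p}{p}\int_\Omega|\nabla u|^p\,dx$ from $F\ge 0$, weak lower semicontinuity of the gradient term from convexity plus strong continuity (Mazur, i.e.\ the Brezis corollary the paper cites), and weak lower semicontinuity of the $F$-term from a compact embedding, an a.e.\ convergent subsequence and Fatou's lemma. Your Poincar\'e estimate on $g+W_0^{1,p}(\Omega)$ supplies the step the paper skips when it declares coercivity from the gradient bound alone, and invoking Rellich directly for every $p$ avoids the paper's separate, somewhat muddled argument for $p\ge n$.
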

\begin{proof}
    By Lemma \ref{lemma_properties of H}, since $F\geq 0$ we infer
    \begin{equation*}
        J(u)\geq \int_{\Omega}\frac{1}{p}H^{p}(\nabla u) \ dx\geq \frac{\theta_{1}^{p}}{p}\int_{\Omega}|\nabla u|^{p}\ dx\geq 0.
    \end{equation*}
    Thus, $J$ is coercive and bounded below. To show that $J$ is weakly lower semi-continuous, let us divide $J$ into two functionals and show that both are weakly lower semi-continuous. Since the functional $u\mapsto \int_{\Omega}H^{p}(\nabla u)\ dx$ is continuous and convex, by \cite[Corollary 3.9, Remark 6]{Brezis2011} it is weakly lower semi-continuous. For the other part, let $v_{n}\rightharpoonup v$ in $W^{1,p}(\Omega)$. If $p< n$, then by the compact Sobolev embedding $v_{n}\to v$ in $L^{p}(\Omega)$ and, if $p\geq n$, then choose $0<\alpha<n^{2}/(p-n)$, then $p<(n-\alpha)^{*}$ and $W^{1,p}(\Omega)\subset W^{1,n-\alpha}(\Omega)$. The compact embedding then gives $v_{n}\to v$ in $L^{p}(\Omega)$ follows. Thus, there is a subsequence $v_{n_{k}}$ such that $v_{n_{k}}\to v$ pointwise a.e. The continuity of $F$ implies $F(v_{n_{k}})\to F(v)$ pointwise  a.e. Finally, Fatou's lemma gives the weakly lower semi-continuity of $u\mapsto \int_{\Omega}F(u)\ dx$.
    \end{proof}
      In view of the above lemma
    \begin{proposition}
        The functional $J$ admits a unique minimiser in $X$.
    \end{proposition}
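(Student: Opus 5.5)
The plan is the direct method of the calculus of variations on the affine class $X$, followed by a strict convexity argument for uniqueness.

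\textbf{Existence.} Set $m:=\inf_{X}J$. Since $J\geq 0$ and $J(g)$ is finite whenever $F(g)\in L^{1}(\Omega)$ (which we assume, so that $X\neq\emptyset$), we have $0\leq m<\infty$. Take a minimising sequence $\{u_k\}\subset X$.
\begin{itemize}
\item By the coercivity estimate of the preceding lemma, $\|\nabla u_k\|_{L^p(\Omega)}^p\leq pJ(u_k)/\theta_1^p$ is bounded.
\item Since $u_k-g\in W_0^{1,p}(\Omega)$, Poincar\'e's inequality applied to $u_k-g$ shows that $\{u_k\}$ is bounded in $W^{1,p}(\Omega)$.
\end{itemize}
By reflexivity ($1<p<\infty$), a subsequence converges weakly in $W^{1,p}(\Omega)$ to some $u$. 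Because $W_0^{1,p}(\Omega)$ is a closed subspace, it is weakly closed, so $u-g\in W_0^{1,p}(\Omega)$. The weak lower semicontinuity from the preceding lemma then gives
\[
J(u)\leq \liminf_{k} J(u_k)=m<\infty .
\]
In particular $F(u)\in L^1(\Omega)$, so $u\in X$ and $u$ is a minimiser.

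\textbf{Uniqueness.} Suppose $u_1,u_2\in X$ are both minimisers, and set $w=(u_1+u_2)/2$. Since $X$ is convex, $w\in X$.
\begin{itemize}
\item By \ref{a1}, $f$ is strictly increasing, so $F$ is strictly convex on $[0,\infty)$.
\item By Lemma \ref{lemma_properties of H}, $H^p$ is convex.
\end{itemize}
Adding the two convexity inequalities gives
\[
m\leq J(w)\leq \tfrac12 J(u_1)+\tfrac12 J(u_2)=m .
\]
Hence equality holds pointwise almost everywhere in the $F$-part, i.e. $F(w)=\tfrac12F(u_1)+\tfrac12F(u_2)$ a.e. Strict convexity of $F$ then forces $u_1=u_2$ a.e.

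\textbf{Main obstacle.} The delicate point is that $F$ is only defined on $[0,\infty)$ by \eqref{eqn primitive of f}, while elements of $X$ might take negative values. I would handle this by extending $f$ oddly, or by setting $f=0$ on $(-\infty,0)$. In the second case one must then check that minimisers are nonnegative when $g\geq 0$. To do this, compare $J(u)$ with $J(u^+)$: $H^p(\nabla u^+)\leq H^p(\nabla u)$ pointwise, $F(u^+)\leq F(u)$, and $u^+-g\in W_0^{1,p}(\Omega)$ when $g\geq 0$.

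If the odd extension is used instead, strict convexity of $F$ can fail at $0$. In that case I would obtain uniqueness through Corollary \ref{strict convexity of H}, which gives strict convexity of $\xi\mapsto H^p(\xi)$:
\begin{itemize}
\item equality in the midpoint inequality yields $\nabla u_1=\nabla u_2$ a.e.;
\item together with $u_1-u_2\in W_0^{1,p}(\Omega)$, this gives $u_1=u_2$.
\end{itemize}
This second route avoids the issue entirely, so I would present it as the main uniqueness argument.
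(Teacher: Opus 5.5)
Your existence argument is the paper's: a minimising sequence, weak compactness, weak closedness of $W_0^{1,p}(\Omega)$, and weak lower semicontinuity. You also make explicit two points the paper leaves implicit, namely the Poincar\'e bound on $u_k-g$ and the nonemptiness of $X$.

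For uniqueness you take a genuinely different route. The paper uses that both minimisers $v_1,v_2$ are weak solutions of \eqref{dirchlet problem}. It subtracts the two weak formulations tested with $v_1-v_2\in W_0^{1,p}(\Omega)$, and plays the monotonicity of $f$ against the strict monotonicity estimate of Corollary \ref{strict convexity of H}. You stay at the level of the functional. Midpoint convexity of $J$ forces both pointwise convexity defects to vanish a.e. Strict convexity of $\xi\mapsto H^{p}(\xi)$ then gives $\nabla u_1=\nabla u_2$ a.e.; this strict convexity does follow from Corollary \ref{strict convexity of H}, or directly from \eqref{eqn improved hyperplane}. Poincar\'e finishes the argument.

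Your version needs only convexity of $F$ and never invokes the Euler--Lagrange equation. It therefore avoids the step the paper takes for granted (``clearly, the minimiser \dots is the solution''). That step requires $f(u)\in L^{p'}(\Omega)$ and differentiating $t\mapsto\int_\Omega F(u+t\phi)\,dx$, which is not automatic when $f$ grows fast and $u$ is unbounded. The paper's route, once that step is justified, yields more: uniqueness of weak solutions in the sense of Definition \ref{dfn weak solution to dirchlet prob}, not merely of minimisers.

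Your observation that $F$ is only defined on $[0,\infty)$ is a real point the paper ignores. One correction to your side remark: under the odd extension, $f$ is strictly increasing on all of $\mathbb{R}$ (because $f(0)=0$). So $F$ is strictly convex on $\mathbb{R}$, and your first route would also work with no sign condition on $g$. This does not affect your main argument.
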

    \begin{proof}
    Let $v_{n}$ be a minimising sequence in $X$. By the weak compactness $v_{n}\rightharpoonup v$ in $W^{1,p}(\Omega)$ up to some subsequence still denoted $v_{n}$. Further, $v$ is a minimiser of $J$ in $W^{1,p}(\Omega)$. The space $W_{0}^{1,p}(\Omega)$ is weakly closed. Thus, $v-g\in W_{0}^{1,p}(\Omega)$. Finally, by Fatou's lemma $F(v)\in L^{1}(\Omega)$ implies $v\in X$. 

    Let $v_{1}$ and $v_{2}$ be two minimisers. 
    By the monotonicity assumption on $f$,
    \begin{equation*}
        \int_{\Omega}\left(f(v_{1})-f(v_{2})\right)(v_{1}-v_{2})\ dx \geq 0.
    \end{equation*}
    By Corollary \ref{strict convexity of H}
    \begin{equation*}
        \int_{\Omega}\left<\Fin{v_{1}}-\Fin{v_{2}}, \nabla (v_{1}-v_{2})\right> \ dx>c \int_{\Omega}H^{p}(v_{1}-v_{2})\ dx,
    \end{equation*}
    if $v_{1}\neq v_{2}$. But since $v_{1}-v_{2}\in W_{0}^{1,p}(\Omega)$
    \begin{equation*}
        \int_{\Omega}\left<\Fin{v_{1}}-\Fin{v_{2}}, \nabla(v_{1}-v_{2})\right> +\left(f(v_{1})-f(v_{2})\right)(v_{1}-v_{2})=0.
    \end{equation*}
    Combining all these we conclude, $v_{1}=v_{2}$.
    \end{proof}
  Finaly, we state the regularity 
  \begin{proposition}
      Let $u$ be a bounded solution of \eqref{dirchlet problem}, then $u\in C_{loc}^{1,\alpha}(\Omega)$ for some $\alpha\in (0,1)$.
  \end{proposition}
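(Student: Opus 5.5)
The plan is to reduce the statement to the $C^{1,\alpha}$ regularity theory for quasilinear equations with $p$-growth, in the form of DiBenedetto and Tolksdorf (interior estimates). Lieberman's theory is the global version. Write the equation as $\operatorname{div} A(\nabla u)=B(u)$ with $A(\xi):=H^{p-1}(\xi)\nabla H(\xi)=\tfrac1p\nabla(H^p)(\xi)$ and $B(u)=f(u)$.

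Since $u$ is bounded and $f$ is continuous, $B(u)\in L^\infty_{loc}(\Omega)$. The right-hand side is therefore a bounded lower-order term, which is admissible in those theorems. It remains to verify the structural conditions on $A$, namely that $A\in C^1(\rn\setminus\{0\})$ with, for some $0<\lambda\le\Lambda$,
\begin{equation*}
\langle DA(\xi)\eta,\eta\rangle\ge \lambda|\xi|^{p-2}|\eta|^2,\qquad |DA(\xi)|\le \Lambda|\xi|^{p-2},\qquad \xi\neq0 .
\end{equation*}

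I carry out this verification in the following steps.

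First, $H^p$ is positively homogeneous of degree $p$ and lies in $C^2(\rn\setminus\{0\})$, so $DA=\tfrac1p\nabla^2(H^p)$ is homogeneous of degree $p-2$. It therefore suffices to bound $DA$ on the unit sphere. The upper bound then follows from continuity and compactness of the sphere.

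Second, for the lower bound I write $H^p=(H^2)^{p/2}$. The chain rule gives
\begin{equation*}
\nabla^2(H^p)=\tfrac p2 (H^2)^{\frac{p-2}{2}}\nabla^2(H^2)+\tfrac p2\cdot\tfrac{p-2}{2}(H^2)^{\frac{p-4}{2}}\nabla(H^2)\otimes\nabla(H^2).
\end{equation*}
For $p\ge2$ the second term is positive semidefinite. The first term is positive definite by the strong convexity assumption on $\nabla^2(H^2/2)$. Compactness of the sphere gives a uniform $\lambda>0$, and Lemma \ref{lemma_properties of H}(1) converts powers of $H$ into powers of $|\xi|$ with constants depending on $\theta_1,\theta_2$.

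Third, I need the hypothesis of those theorems that $u\in W^{1,p}_{loc}$ is a bounded weak solution, and this is given. Applying the interior estimate on each $\Omega'\subset\subset\Omega$ then yields $u\in C^{1,\alpha}(\Omega')$, with $\alpha$ depending on $n,p,\lambda,\Lambda$ and $\|f(u)\|_{L^\infty}$. Hence $u\in C^{1,\alpha}_{loc}(\Omega)$.

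The main obstacle is the degeneracy of $A$ at $\xi=0$. There $DA$ is not defined and is degenerate for $p>2$, so the classical Schauder or De Giorgi approach cannot be used directly. This is exactly why I rely on DiBenedetto and Tolksdorf rather than redoing the regularity argument. The only real work is the careful check of their growth and ellipticity hypotheses, which is what the second step does.
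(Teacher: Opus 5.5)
Your proposal is correct and follows essentially the same route as the paper. The paper also appeals to Tolksdorf's interior $C^{1,\alpha}$ theory, but it cites Cozzi--Farina--Valdinoci for the fact that strong convexity of $H^2/2$ gives the required ellipticity and growth bounds on $DA$, whereas you verify these bounds directly via the $(p-2)$-homogeneity of $\nabla^2(H^p)$ and your chain-rule decomposition.
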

  \begin{proof}
      The strong convexity is equivalent to uniform ellipticity. Thus by \cite[Proposition 3.1]{CozziFarinaValdinoci2016}, \cite[Proposition 3.1]{CozziFarinaValdinoci2014} (see the proof of Proposition 3.1 \cite{CozziFarinaValdinoci2014}), and \cite{Tolksdorf1984} the solutions are in $C_{loc}^{1,\alpha}(\Omega)$ for some $\alpha\in (0,1)$.
  \end{proof}

    \section{Acknowledgements}
    The author is supported by PMRF grant (2302262).

    The author thanks Dr. Indranil Chowdhury (Department of Mathematics and Statistics,
Indian Institute of Technology Kanpur) for the support and valuable suggestions.
\renewcommand\refname{Bibliography}
\bibliographystyle{abbrv}
\bibliography{ref.bib}
\end{document}